\documentclass[reqno]{amsart}
\pdfoutput=1

\usepackage{amsmath}
\usepackage{multicol}
\usepackage{mathrsfs}
\usepackage{amssymb}
\usepackage{amsthm}
\usepackage[top=95pt,bottom=80pt,left=85pt,right=85pt,footskip=35pt]{geometry}
\usepackage{xcolor}
\usepackage{enumerate}
\usepackage[all]{xy}
\xyoption{pdf}
\usepackage{tabularx}

\usepackage{makecell}
\usepackage{csquotes}
\usepackage{hyperref}
\usepackage{tikz}
\usetikzlibrary{shapes,positioning,intersections,quotes,calc}
\usepackage{ctable}
\setlength{\fboxrule}{0pt}
\usepackage{fancyhdr,floatpag} \pagestyle{fancy} \fancyhf{}

\fancyfoot[C]{\scriptsize\thepage} \floatpagestyle{fancy}

\numberwithin{equation}{section}

\theoremstyle{definition}
\newtheorem*{Def}{Definition}
\newtheorem{Thm}{Theorem}[section]
\newtheorem{Cor}[Thm]{Corollary}
\newtheorem{Prop}[Thm]{Proposition}
\newtheorem{Lem}[Thm]{Lemma}
\newtheorem{Rmk}[Thm]{Remark}
\newtheorem{Ex}[Thm]{Example}

\newcommand{\PP}{\mathbb{P}}

\newcommand{\QQ}{\mathbb{Q}}

\newcommand{\Oo}{\mathcal{O}}

\newcommand{\Cc}{\mathcal{C}}
\newcommand{\Dd}{\mathcal{D}}
\newcommand{\Ee}{\mathcal{E}}
\newcommand{\Zz}{\mathcal{Z}}

\newcommand{\Bl}{\mathrm{Bl}}
\newcommand{\Sym}{\operatorname{Sym}}

\DeclareFontFamily{U}{mathc}{}
\DeclareFontShape{U}{mathc}{m}{it}%
{<->s*[1.03] mathc10}{}
\DeclareMathAlphabet{\mathcal}{U}{mathc}{m}{it}

\begin{document}

\title{Positivity of the tangent bundle of rational surfaces with nef anticanonical divisor}
\author{Hosung Kim, Jeong-Seop Kim, and Yongnam Lee}

\address {Department of Mathematics\\
Changwon National University\\
20 Changwondaehak-ro, Uichang-gu\\
Changwon-si, Gyeongsangnam-do, 51140 Korea}
\email{hosungkim@changwon.ac.kr}

\address {School of Mathematics\\
Korea Institute for Advanced Study\\
85 Hoegiro, Dongdaemun-gu\\
Seoul, 02455 Korea}
\email{jeongseop@kias.re.kr}

\address {Center for Complex Geometry\\
Institute for Basic Science (IBS)\\
55 Expo-ro, Yuseong-gu\\ 
Daejeon, 34126 Korea, and 
\newline\hspace*{3mm} Department of Mathematical Sciences\\
KAIST\\
291 Daehak-ro, Yuseong-gu\\ 
Daejeon, 34141 Korea}
\email{ynlee@ibs.re.kr}

\thanks{
MSC 2010: 14J26, 14J60\\
Keywords: rational elliptic surface, weak del Pezzo surface, tangent bundle, bigness,  configuration of $(-2)$-curves} 

\begin{abstract}
In this paper, we study the property of bigness of the tangent bundle of a smooth projective rational surface with nef anticanonical divisor.
We first show that the tangent bundle $T_S$ of $S$ is not big if $S$ is a rational elliptic surface.
We then study the property of bigness of the tangent bundle $T_S$ of a weak del Pezzo surface $S$.
When the degree of $S$ is $4$, we completely determine the bigness of the tangent bundle through the configuration of $(-2)$-curves.
When the degree $d$ of $S$ is less than or equal to $3$, we get a partial answer.
In particular, we show that $T_S$ is not big when the number of $(-2)$-curves is less than or equal to $7-d$, and $T_S$ is big when $d=3$ and $S$ has the maximum number of $(-2)$-curves.
The main ingredient of the proof is to produce irreducible effective divisors on $\PP(T_S)$, using Serrano's work on the relative tangent bundle when $S$ has a fibration, or the total dual VMRT associated to a conic fibration on $S$.
\end{abstract}

\maketitle

\section{Introduction}

Throughout the paper, we will work over the field of complex numbers.
Let $X$ be a smooth projective variety.
We say that the tangent bundle $T_X$ of $X$ is pseudo-effective (resp. big) if the tautological class $\Oo_{\PP(T_X)}(1)$ of the projectivized bundle $\PP(T_X)$ is pseudo-effective (resp. big).
Here, $\PP(T_X)$ is the Grothendieck projectivization of $T_X$.
In general, it is difficult to give a numerical or geometrical characterization for the pseudo-effectiveness or the bigness of the tangent bundle, even on surfaces with negative Kodaira dimension.

For surfaces $S$ with nonnegative Kodaira dimension, we have a complete answer on the positivity of the tangent bundle \cite{JLZ}: the tangent bundle $T_S$ is pseudo-effective if and only if the canonical divisor $K_S$ is nef and the second Chern class $c_2(S)$ vanishes. 
This can also be obtained through a nice splitting structure of the tangent bundle of a smooth non-uniruled projective surface $S$ when $T_S$ is pseudo-effective by H\"oring and Peternell \cite{HP}.

However, for surfaces $S$ with negative Kodaira dimension, the known answer is still far from satisfactory.
H\"oring, Liu, and Shao \cite[Theorem~1.4]{HLS22} extended the result of Mallory in \cite{Mal21} and provided a complete answer for del Pezzo surfaces: if $S$ is a del Pezzo surface of degree $d$,~then
\begin{itemize}
\item[(a)] $T_S$ is pseudo-effective if and only if $d \ge 4$, 
\item[(b)] $T_S$ is big if and only if $d\ge 5$.
\end{itemize}
For relatively minimal ruled surfaces, Kim showed that a projective bundle $\PP_C(E)$ over a smooth projective curve $C$ has a big tangent bundle if and only if $E$ is unstable or $C=\PP^1$ \cite{Kim23}.

In this paper, we study the property of the bigness of the tangent bundle $T_S$ of rational surfaces with nef anticanonical divisor.
We first prove the following theorem.

\begin{Thm}\label{rational elliptic}
Let $S$ be a smooth projective rational elliptic surface.
Then the tangent bundle $T_S$ of $S$ is not big.
\end{Thm}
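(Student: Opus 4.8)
The plan is to translate bigness of $T_S$ into a growth estimate on symmetric differentials and then to kill that growth using the elliptic fibration. By definition $T_S$ is big iff $\xi := \Oo_{\PP(T_S)}(1)$ is big on the threefold $\PP(T_S)$, which, since $\dim \PP(T_S) = 3$, happens iff $h^0(\PP(T_S), \xi^{\otimes m}) = h^0(S, \Sym^m T_S)$ grows like $m^3$. Hence it suffices to prove the sub-cubic bound $h^0(S, \Sym^m T_S) = O(m^2)$. As a consistency check, a relatively minimal rational elliptic surface has $K_S^2 = 0$ and $c_2(S) = 12$, so $\xi^3 = c_1(T_S)^2 - c_2(T_S) = K_S^2 - c_2(S) = -12 < 0$ and $\chi(\Sym^m T_S)$ grows like $-2m^3$; this is suggestive but does not by itself bound $h^0$, so a genuine estimate on global sections is needed.

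The input is the elliptic fibration $\pi \colon S \to \PP^1$. Writing $T_{S/\PP^1} := \ker(d\pi)$ for the saturated relative tangent line bundle, one has the exact sequence
\[
0 \to T_{S/\PP^1} \to T_S \xrightarrow{\,d\pi\,} \mathcal{Q} \to 0,
\]
where $\mathcal{Q} \subseteq \pi^* T_{\PP^1}$ is the rank-one image sheaf, an isomorphism onto $\pi^* T_{\PP^1}$ away from the critical points of $\pi$. The crucial numerical feature is that on a general fiber $F$ (a smooth elliptic curve) both pieces are trivial: $T_{S/\PP^1}|_F \cong T_F \cong \Oo_F$ and $\pi^* T_{\PP^1}|_F \cong \Oo_F$. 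In the relatively minimal case the canonical bundle formula gives $-K_S = F$, hence $T_{S/\PP^1} = -K_{S/\PP^1} = \Oo_S(-F)$ and $\pi^* T_{\PP^1} = \Oo_S(2F)$.

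Next I would use the filtration of $\Sym^m T_S$ induced by the subsheaf $T_{S/\PP^1}$, whose associated graded pieces are
\[
G_i = T_{S/\PP^1}^{\otimes i} \otimes \Sym^{m-i} \mathcal{Q}, \qquad i = 0, 1, \dots, m,
\]
so that $h^0(S, \Sym^m T_S) \le \sum_{i=0}^m h^0(S, G_i)$. Since $T_{S/\PP^1}$ and $\mathcal{Q}$ both restrict trivially to a general fiber, each $G_i$ restricts to $\Oo_F$ on a general $F$; therefore $\pi_* G_i$ is a rank-at-most-one torsion-free sheaf on $\PP^1$, i.e. up to a torsion subsheaf a line bundle $\Oo_{\PP^1}(d_i)$, and $h^0(S, G_i) = h^0(\PP^1, \pi_* G_i) = O(m)$. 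In the relatively minimal case this is explicit: modulo torsion $G_i \equiv (2m - 3i)F$, so $\pi_* G_i = \Oo_{\PP^1}(2m - 3i)$ and $h^0(G_i) = \max(2m - 3i + 1, 0)$, which is nonzero only for $i \le 2m/3$. Summing,
\[
h^0(S, \Sym^m T_S) \le \sum_{i=0}^{\lfloor 2m/3 \rfloor} (2m - 3i + 1) = O(m^2),
\]
which is sub-cubic, so $\xi$ is not big and neither is $T_S$.

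The main obstacle is the bookkeeping at the singular fibers. Because $\mathcal{Q}$ and its symmetric powers fail to be locally free at the critical points of $\pi$, I must check that forming $\Sym^{m-i}\mathcal{Q}$ and pushing forward creates no extra sections: concretely, that the torsion supported over the finitely many critical values contributes only $O(m)$ to each $h^0(G_i)$ and does not raise the generic rank of $\pi_* G_i$ above one. The triviality of $T_{S/\PP^1}$ and $\pi^* T_{\PP^1}$ on a general fiber is precisely what pins the generic rank at one and forces the linear-in-$m$ growth of each $h^0(G_i)$; controlling the degrees $d_i$ uniformly, rather than just their signs, is the quantitative heart of the estimate. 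Finally, a rational elliptic surface that is not relatively minimal can be treated by the same sequence after replacing $-K_S = F$ by $T_{S/\PP^1} = -K_S - 2F$ and checking that this remains of degree zero on a general fiber, or by reducing to the relatively minimal model.
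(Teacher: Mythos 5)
Your proposal is correct in outline and takes a genuinely different route from the paper. Both arguments run on the same engine --- Serrano's sequence $0\to T_{S/B}\to T_S\to(-f^*K_B-D)\otimes\Ii_\Gamma\to 0$ --- but the paper converts it into divisor geometry on $\PP(T_S)$: the quotient defines a prime divisor $Y=\zeta-\Pi^*T_{S/B}$, Kodaira's classification together with $\chi_{\rm top}(S)=12$ shows that $Y$ (or $2Y$, in the one exceptional configuration of two $I_0^*$ fibers) equals $\zeta$ plus the pullback of a nonzero effective divisor, and non-bigness follows because $Y$ is dominated by liftings $\widetilde F$ of elliptic fibers with $\zeta\cdot\widetilde F=0$, as in the proof of Lemma 2.2 of \cite{KKL}. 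You instead bound $h^0(S,\Sym^m T_S)$ directly, filtering $\Sym^m T_S$ by powers of $T_{S/B}$ and using that each graded piece has degree zero on the general fiber, hence $O(m)$ sections, for a total of $O(m^2)$, which is sub-cubic. Your route is more elementary and more uniform --- it needs no irreducibility statement for $Y$, no enumeration of fiber types, and no separate treatment of the $2I_0^*$ case --- whereas the paper's route produces the effective-divisor (total dual VMRT) machinery that it then reuses throughout the weak del Pezzo sections.

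Two points in your writeup need repair, and both are repairable. First, Serrano's formula is $T_{S/B}=-K_{S/B}+D$ with $D=E_0+\sum_j(m_j-1)F_j$, so your identities $T_{S/\PP^1}=\Oo_S(-F)$ and $\pi_*G_i=\Oo_{\PP^1}(2m-3i)$ hold only when every fiber is reduced; non-reduced fibers do occur, and in the Halphen (no-section) case one has $-K_S=F-(m_1-1)F_1$, not $F$. This costs nothing for an upper bound: in all cases $T_{S/B}=\Oo_S(-F+E_0)$, so $G_i$ injects into $\Oo_S((2m-3i)F+(2i-m)E_0)$, a vertical class with coefficients $O(m)$, whence $h^0(G_i)=O(m)$ uniformly. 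Second, the torsion issue you flag as the main obstacle in fact dissolves: since $\Ii_\Gamma$ is locally generated by a regular sequence, $\Sym^k$ of the rank-one quotient is $(-f^*K_B-D)^{\otimes k}\otimes\Ii_\Gamma^k$, which is torsion-free, and the sequence $0\to T_{S/B}\otimes\Sym^{m-1}T_S\to\Sym^m T_S\to\Sym^m\mathcal{Q}\to 0$ is exact (right-exactness of $\Sym$ plus injectivity of the multiplication map into a locally free sheaf), so the graded pieces of your filtration are exactly the $G_i$ and no extra sections appear over the critical values. With these corrections your $O(m^2)$ bound, hence the non-bigness of $T_S$, is a complete proof.
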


We then study the property of the bigness of $T_S$ of weak del Pezzo surfaces $S$.
By the upper semicontinuity of the dimension of cohomology under specialization, $T_S$ is big for all weak del Pezzo surfaces $S$ of degree $\ge 5$.
For degree $4$, we completely determine the bigness of the tangent bundle through the configuration of $(-2)$-curves in $S$.

\begin{Thm}\label{degree 4}
Let $T_S$ be the tangent bundle of a weak del Pezzo surface $S$ of degree~$4$.
Then $T_S$ is big except when $S$ has one of the following types of configuration of $(-2)$-curves: 
$\emptyset$, $A_1$, $2A_1$ with $8$ lines, $A_2$, and $A_3$ with $4$ lines.
\end{Thm}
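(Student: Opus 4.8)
The plan is to work on the Grothendieck projectivization $\pi\colon\PP(T_S)\to S$ with tautological class $\xi=\Oo_{\PP(T_S)}(1)$, so that $T_S$ is big exactly when $\xi$ is big. Here $\NS(\PP(T_S))=\ZZ\xi\oplus\pi^*\NS(S)$, and since $S$ is the blow-up of $\PP^2$ at five points we have $K_S^2=4$ and $c_2(S)=8$, so $\xi^3=K_S^2-c_2(S)=-4<0$; in particular $\xi$ is never nef, and its bigness cannot be read off from $\xi^3$ alone but must be produced by horizontal effective divisors. The key reduction I will use is that, because $\xi$ is $\pi$-ample, $\xi$ is big as soon as $\xi-\pi^*A$ is $\QQ$-effective for a single ample divisor $A$ on $S$: indeed $(m+1)\xi=(\xi+m\pi^*A)+m(\xi-\pi^*A)$ exhibits a large multiple of $\xi$ as ample plus effective once $m\gg 0$.

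The divisors come from the conic bundle structures on $S$. A weak del Pezzo surface of degree $4$ carries several conic fibrations $f\colon S\to\PP^1$, each with four singular fibers. For such an $f$ the relative tangent sheaf $T_{S/\PP^1}=\ker(T_S\to f^*T_{\PP^1})$ is a saturated sub-line-bundle of $T_S$, and the induced section of $\PP(T_S)$ has irreducible image $D_f$ of class $[D_f]=\xi+\pi^*(K_S+2F_f-\Delta_f)$, where $F_f$ is the fiber class and $\Delta_f\ge 0$ is the divisorial part of the cokernel of $T_S\to f^*T_{\PP^1}$. By the reduction above, $\xi$ is big provided $-\beta_f:=-K_S-2F_f+\Delta_f$ is big on $S$ for some $f$, or provided the average $-K_S-\tfrac{2}{k}\sum_f F_f+\tfrac1k\sum_f\Delta_f$ is big. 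This is where the configuration of $(-2)$-curves enters decisively: a critical point of $f$ over a reduced nodal fiber is isolated and contributes nothing to the divisor class $\Delta_f$, whereas a non-reduced fiber or a fiber component supporting a $(-2)$-curve forces $f$ to be critical along a curve and thereby adds an effective divisor to $\Delta_f$. Since $(-K_S-2F_f)^2=-4$, the class $-\beta_f$ fails to be big when $\Delta_f=0$ but becomes big once enough $(-2)$-curves sit inside the fibers; this is exactly the dichotomy of the theorem, and it already explains why the smooth case $\emptyset$, whose conic bundles have only reduced singular fibers, is non-big.

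For the bigness half I would therefore run through the classification of $(-2)$-configurations of type $D_5$, and for each non-exceptional type exhibit one conic fibration (or average a few) whose fibers absorb enough $(-2)$-curves to make $-\beta_f$ big on $S$, reducing the whole question to intersection computations on $S$. It then suffices to verify this for the minimal configurations at which bigness first appears, because bigness is preserved under specialization by the upper semicontinuity of $h^0(S,\Sym^m T_S)$ — the same principle used for degree $\ge 5$ — so every more degenerate configuration, being a specialization of a big one, is automatically big. Organizing the non-exceptional types along this specialization poset then reduces the bigness statement to finitely many explicit class computations.

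The non-bigness half is the main obstacle. For $\emptyset$ it is precisely the statement that the smooth del Pezzo surface of degree $4$ has non-big tangent bundle, i.e.\ case (b) of \cite{HLS22}. For $A_1$, $2A_1$ with $8$ lines, $A_2$, and $A_3$ with $4$ lines one cannot invoke semicontinuity, since non-bigness does not propagate under specialization, and one must instead bound the volume of $\xi$ from above. In each of these four configurations one checks that every conic fibration has $\Delta_f=0$, so that the foliation curves $\sigma_f(C)\subset D_f$ attached to the fibers $C\equiv F_f$ satisfy $\xi\cdot\sigma_f(C)=(2F_f-\Delta_f)\cdot F_f=0$; hence $\xi$ restricts to a non-big class on each horizontal divisor $D_f$. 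The remaining and hardest task is to show that these $D_f$, together with the vertical classes $\pi^*\NS(S)$, already account for the entire pseudo-effective cone $\overline{\mathrm{Eff}}(\PP(T_S))$ — equivalently, that no other irreducible horizontal divisor exists — from which $\mathrm{vol}(\xi)=0$ follows. It is this enumeration of the effective cone, resting on the precise incidence of each $(-2)$-curve with the four singular fibers of every ruling, that makes the borderline configurations delicate; by contrast, the bigness cases only require producing a single favorable divisor, which is constructive.
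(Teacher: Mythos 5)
Your bigness half is, in outline, the paper's own argument: your divisors $D_f$ are exactly the total dual VMRTs $\zeta+\Pi^*(K_S+2F_f-\Delta_f)$ of its Section 2, and the reduction along the specialization poset via upper semicontinuity of $h^0(\Sym^m T_S)$ is precisely how the paper cuts the list to a single minimal big type, namely $2A_1$ with $9$ lines. But at that minimal type your single-fibration criterion fails: the only pencil with $\Delta_f\neq 0$ is the conics through $p_1,\ldots,p_4$, whose non-reduced member $N_1+N_2+2E_5$ gives $\Delta_f=E_5$ and $(-K_S-2F_f+\Delta_f)^2=-3<0$; and even averaging all six pencils yields $6\zeta=\sum_f \breve\Cc_f+\Pi^*(N_1+N_2+Q)$ with $Q=2H-E_1-\cdots-E_5$ a $(-1)$-class, so the effective excess is not big and your ``effective plus pullback of ample'' reduction does not close. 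The paper supplements the six VMRTs with the pullbacks of the two $(-2)$-curves and with $\Pi^*E_5$, writes $6\zeta$ as a positive combination of nine effective classes spanning the full rank $7=\rho(\PP(T_S))$, and concludes that $\zeta$ lies in the interior of the pseudo-effective cone; this interiority mechanism is the ingredient your sketch is missing.

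The genuine gap is the non-bigness half, which you explicitly leave open: determining $\overline{\mathrm{Eff}}(\PP(T_S))$, or proving that the $D_f$ are the only horizontal prime divisors, in four separate borderline configurations is far harder than the theorem and is not what the paper does. Two tools close it. First, non-bigness \emph{does} propagate, under generalization: the contrapositive of the very semicontinuity statement you invoke shows that if the special fiber has non-big tangent bundle then so does the general one, and since $\emptyset$, $A_1$, $A_2$, and $2A_1$ with $8$ lines all specialize to $A_3$ with $4$ lines in the hierarchy, only that one type needs an argument (this also recovers the degree-$4$ del Pezzo case rather than quoting it). Second, the passage from ``$\zeta$ is trivial on the curves covering each $D_f$'' to $\mathrm{vol}(\zeta)=0$ is supplied by the covering-family criterion (Lemma~2.3 of the paper, from KKL): if total dual VMRTs sum to $k\zeta+\Pi^*(\text{effective divisor})$ with $k>0$, then $T_S$ is not big — no description of the effective cone enters at all. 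Your own class formula then finishes $A_3$ with $4$ lines in one line: the pencils $F_1=H-E_5$ and $F_2=2H-E_1-2E_2-3E_3-E_4$ have only reduced members, and $F_1+F_2=-K_S$, so $D_{f_1}+D_{f_2}=2\zeta+\Pi^*(2K_S+2F_1+2F_2)=2\zeta$. Without (a) the reduction to the single most degenerate non-big type and (b) a lemma of this kind, your non-bigness argument does not go through as proposed.
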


If the degree $d$ of $S$ is less than or equal to $3$, then we have a partial answer.
In particular, we show that $T_S$ is not big if the number of $(-2)$-curves is less than or equal to $7-d$.
We note that the maximum number of $(-2)$-curves in a weak del Pezzo surface of degree $d$ is $9-d$.

\begin{Thm}\label{degree 3}
Let $T_S$ be the tangent bundle of a weak del Pezzo surface $S$ of degree~$d\le 3$.
\begin{enumerate}
\item If the number of $(-2)$-curves in $S$ is less than or equal to $7-d$, then $T_S$ is not big.
\item If $d=3$ and $S$ has the maximum number of $(-2)$-curves, then $T_S$ is big.
\end{enumerate}
\end{Thm}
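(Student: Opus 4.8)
Throughout, write $\xi = c_1(\Oo_{\PP(T_S)}(1))$ and let $\pi\colon\PP(T_S)\to S$ denote the projection, so that $T_S$ is big exactly when $\xi$ is big, equivalently when $h^0(\PP(T_S),m\xi)=h^0(S,\Sym^m T_S)$ grows like $m^3$. The first thing I would record is the numerical shape of the problem: from the Grothendieck relation $\xi^2=\pi^*c_1(T_S)\cdot\xi-\pi^*c_2(T_S)$ one gets $\xi^3=c_1(T_S)^2-c_2(T_S)=K_S^2-e(S)=d-(12-d)=2d-12$, using Noether's formula $e(S)=12-K_S^2$ for a rational surface. For $d\le 3$ this is $\le-6<0$, so $\xi$ is never nef; any bigness must come with an essential negative part in its divisorial Zariski decomposition, and both statements are really about how much negativity the configuration of $(-2)$-curves can supply. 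The role played by a $(-2)$-curve $C$ is visible already here: since $K_S\cdot C=0$ and $C^2=-2$, the sequence $0\to T_C\to T_S|_C\to N_{C/S}\to 0$ splits as $T_S|_C\cong\Oo(2)\oplus\Oo(-2)$, so $\PP(T_S)|_C\cong\mathbb{F}_4$ carries a negative section on which $\xi$ has degree $-2$.

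For part (1), the plan is to bound $h^0(S,\Sym^m T_S)$ from above and show it is $O(m^2)$. I would fix a conic bundle $f\colon S\to\PP^1$ (a weak del Pezzo surface of degree $\le 7$ carries such structures), together with Serrano's saturated relative tangent line bundle $\mathcal{T}\hookrightarrow T_S$ and its quotient $Q$, and then analyze the divisorial Zariski decomposition $\xi=P+N$. The negative part $N$ should be built from the total dual VMRT divisor of $f$ together with the $\mathbb{F}_4$'s lying over the $(-2)$-curves, and the constraint that the number $k$ of $(-2)$-curves satisfies $k\le 7-d$ is exactly the bookkeeping that keeps $N$ too small for $P=\xi-N$ to be big, forcing $\mathrm{vol}(\xi)=0$. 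A clean auxiliary tool is that a contraction $\beta\colon S\to S'$ of a $(-1)$-curve induces $T_S\hookrightarrow\beta^*T_{S'}$, hence $h^0(S,\Sym^m T_S)\le h^0(S',\Sym^m T_{S'})$; when such a contraction can be arranged to terminate at one of the non-big degree-$4$ configurations of Theorem~\ref{degree 4} (those with at most $3$ $(-2)$-curves), this gives non-bigness immediately, and the fibration analysis handles the general case.

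For part (2), with $d=3$ and the maximal number $9-d=6$ of $(-2)$-curves, the plan is the reverse: produce enough irreducible effective divisors on $\PP(T_S)$ to place $\xi$ in the interior of the pseudo-effective cone. Concretely, each conic bundle on $S$ contributes an irreducible total dual VMRT divisor $D$ with $[D]=\xi+\pi^*\delta$ for an explicit $\delta\in\NS(S)$, while the six $(-2)$-curves contribute the six negative sections of the corresponding $\mathbb{F}_4$'s. Feeding these into the decomposition $\xi=P+N$, the six $(-2)$-curves supply so much negativity that, after subtracting $N$, the positive part $P$ becomes nef with $P^3>0$ despite $\xi^3=-6$; equivalently, $\xi\cdot\gamma>0$ for every nonzero movable curve class $\gamma$. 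Since the configurations realizing six $(-2)$-curves are the finitely many rank-$6$ subsystems $E_6,\ A_5+A_1,\ 3A_2,\dots$ of $E_6$, this is an explicit finite check.

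The main obstacle is the upper bound in part (1). Filtering $\Sym^m T_S$ by a single fibration yields only $h^0(S,\Sym^m T_S)\le\sum_{j=0}^m h^0\!\big(S,\mathcal{T}^{\otimes j}\otimes Q^{\otimes(m-j)}\big)$, and because the midpoint $-\tfrac12 K_S$ of the relevant segment in $\NS(S)_{\mathbb{R}}$ is big, this bound is already cubic and hence inconclusive: the real point is that most of these graded sections do not lift to $\Sym^m T_S$. Pinning down exactly which sections survive — equivalently, exhibiting a nonzero movable curve $\gamma_0$ with $\xi\cdot\gamma_0\le 0$ — is the delicate step, and it is here that Serrano's computation of $c_1(\mathcal{T})$ in terms of the singular fibers, together with the geometry of the total dual VMRT, does the real work and where the threshold $7-d$ enters. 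For part (2) the analogous obstacle is purely computational: verifying, configuration by configuration, that after subtracting the negative part the surviving class $P$ is nef and has $P^3>0$.
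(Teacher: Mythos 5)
Your proposal is a strategy outline, not a proof, and by your own admission the decisive step of part (1) is missing: you correctly observe that the filtration coming from a single conic fibration only gives a cubic upper bound on $h^0(S,\Sym^m T_S)$, but you never exhibit the mechanism that kills the volume, and the Zariski-decomposition bookkeeping you invoke (``$k\le 7-d$ keeps $N$ too small'') is asserted rather than established. The paper's mechanism is different and much softer: it never estimates volumes or decomposes $\xi$. Instead, for each of the six degree-$3$ types with exactly four $(-2)$-curves ($2A_2$, $A_2{+}2A_1$, $D_4$, $A_4$, $A_3{+}A_1$, $4A_1$) it writes down two or more total dual VMRT prime divisors, computed via Corollary~\ref{sing} to account for non-reduced members of the conic pencils, whose sum is $k\zeta+\Pi^*(\text{effective})$, and invokes Lemma~\ref{not_big_lemma}; types with fewer $(-2)$-curves then follow by semicontinuity along the specialization hierarchy (non-bigness of the special fiber propagates to the general one by the contrapositive of Lemma~\ref{specialization}), and degrees $d\le 2$ with $7-d$ curves reduce to degree $3$ with four curves via Lemma~\ref{domination}. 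Your alternative reduction by contracting to a degree-$4$ surface is unreliable: the contraction must land exactly on one of the non-big types $\emptyset$, $A_1$, $2A_1^{(8)}$, $A_2$, $A_3^{(4)}$, and for, say, the degree-$3$ type $D_4$ the possible images include the degree-$4$ types $3A_1$, $A_3^{(5)}$, $D_4$, all of which have big tangent bundle, so the contrapositive of the domination lemma yields nothing.

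For part (2) the ``explicit finite check'' is likewise not performed, and as stated it fails for type $E_6$. First, a structural error: the negative sections of the $\mathbb{F}_4$'s over the $(-2)$-curves are curves in the threefold $\PP(T_S)$, not divisors, so they cannot be fed into a divisorial decomposition $\xi=P+N$; only the full preimages $\Pi^*C$ could, and they carry no useful negativity against $\xi$. Second, on the $E_6$ surface there is a single line and essentially one available conic pencil, so the cone-interior argument that works for $2A_1^{(9)}$ (Lemma~\ref{lem1_deg4}) and $A_3{+}2A_1$ (Proposition~\ref{A_3+2A_1}) — a full-rank family of effective divisors summing to a multiple of $\zeta$ — cannot be run. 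This is exactly where the paper introduces a genuinely new method: from the effectivity of $\zeta-\Pi^*D$, with $D$ the union of $(-2)$-curves, it passes to $T_S(-\log D)\otimes\Oo_S(D)$ and the orbifold anticanonical model, and concludes bigness by asymptotic Riemann--Roch with orbifold Chern classes $c_1(\Zz)^2-c_2(\Zz)=3-\tfrac{49}{24}>0$ together with an $h^2$ bound via duality (Lemma~\ref{big_anticanonical_model} and Proposition~\ref{E_6}). The remaining maximal types are not handled by cone computations either: $3A_2$ is big because the surface is toric (citing Hsiao), and $A_5{+}A_1$ follows by semicontinuity from $A_3{+}2A_1$. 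So both halves of your plan stop short of, and in part (2) point away from, the arguments that actually close the proof.
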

 
The main ingredient of the proof is to produce irreducible effective divisors on $\PP(T_S)$, using Serrano's work on the relative tangent bundle when $S$ has a fibration, or the total dual VMRT associated to a conic fibration on $S$.
We present some criteria to determine the bigness of $T_S$ in Sections 2 and 4.
Theorem~\ref{rational elliptic} will be proved in Section 3.
Theorem~\ref{degree 4} (resp. Theorem~\ref{degree 3}) will be proved in Section~5 (resp. Section~6).
To show that $T_S$ is big for a weak del Pezzo surface $S$ of degree $3$ with type $E_6$, we use the total dual VMRT and a new method.
We pursue this case in Proposition~\ref{E_6}.

Recently, Martin and Stadlmayr \cite{MS} classified all weak del Pezzo surfaces which have a global vector field over an algebraically closed field of arbitrary characteristic.
Our theorems show that the positivity property of the tangent bundle of weak del Pezzo surfaces is not directly related to infinite automorphism groups.
For instance, if $S$ is a weak del Pezzo surface of degree $4$ of type $2A_1$ with $8$ lines, then $T_S$ is not big even though it has an infinite automorphism group (cf. \cite{MS}).
On the other hand, if $S$ is of type $2A_1$ with $9$ lines, then $T_S$ is big even though it only has finite automorphism groups. 

One interesting question is determining the bigness of the tangent bundle of weak del Pezzo surfaces $S$ of degree $d\le 2$ with the maximum number of $(-2)$-curves in $S$.
Similar questions can also be posed for weak del Pezzo surfaces over a field of positive characteristic.

\medskip
\noindent
{\bf Acknowledgements.}
J.-S. Kim is supported by the NRF grant funded by the Korea government (MSIT) RS-2024-00349592.
Y. Lee is supported by the Institute for Basic Science IBS-R032-D1.
The authors would like to thank Igor Dolgachev and DongSeon Hwang for their valuable comments.

\section{Preliminaries}

Let $f: S\to B$ be a fibration from a smooth projective surface onto a smooth projective curve. Here, a fibration means that all fibers are connected. Let 
\[
D:=\sum_{b\in B}f^{*}b-(f^{*}b)_{\rm red} = \sum_{i\in I} (\nu_{i}-1)E_{i} + \sum_{j\in J} (m_{j}-1)F_{j} = E_{0} + \sum_{j\in J} (m_{j}-1)F_{j}
\]
where $E_0: =\sum_{i\in I} (\nu_i-1)E_i$ comes from the non-multiple non-reduced fibres. This decomposition is indeed the Zariski decomposition where $\sum_{j\in J} (m_{j}-1)F_{j}$ is the nef part and $E_{0}$ is the fixed part.

Due to \cite{Ser92} or \cite[Section~3]{Ser96}, we have an exact sequence
\[
0\longrightarrow T_{S/B}\longrightarrow T_{S}\longrightarrow f^{*}T_{B}\longrightarrow\mathcal{F}\longrightarrow0
\]
where $T_{S}$ and $T_{B}$ are the tangent bundles of $S$ and $B$ respectively, $\mathcal{F}$ is a torsion sheaf, and $T_{S/B}$ is the relative tangent sheaf of $f$, which is locally free.
Let $J_{S/B}$ be the torsion-free image of $T_{S}\to f^{*}T_{B}$.
And we have
\[
J_{S/B}=J_{S/B}^{\vee\vee}\otimes\mathcal{I}_\Gamma =K_{S}^{-1}\otimes T_{S/B}^{-1}\otimes\mathcal{I}_{\Gamma}
\]
where $\mathcal{I}_\Gamma$ is an ideal sheaf of a subscheme $\Gamma$ of $S$, which is  supported on the points $s\in S$ such that the reduced structure $f^{-1}(f(s))_{\textup{red}}$ is singular at $s$.
In addition, we have $T_{S/B}=-K_{S}+f^{*}K_{B}+D$. In particular, we have the following short exact sequence
\[\label{eq:blowup_gamma}
\tag{\dag}
0\longrightarrow T_{S/B} \longrightarrow T_{S} \longrightarrow (-f^{*}K_{B}-D)\otimes\mathcal{I}_\Gamma \longrightarrow 0.
\]
Let $Y:=\mathbb{P}_S((-f^{*}K_B-D)\otimes\mathcal{I}_\Gamma)\cong \mathbb{P}_S(\mathcal{I}_{\Gamma})\subseteq \PP(T_S)$, which is a prime divisor on $W: =\PP(T_S)$.
Note that $Y$ is isomorphic to the blow-up of $S$ along the ideal sheaf $\mathcal{I}_\Gamma$ since $\mathcal I_\Gamma$ is locally generated by a regular sequence.
In general, $Y$ is not necessarily smooth.
Let $\Pi: \PP(T_S)\to S$ be the projective bundle associated to $T_S$ and $\zeta$ be the tautological line bundle on $\PP(T_S)$, and denote by ${\rm Exc}(\Pi|_Y)$ the exceptional divisor of $\Pi|_Y$.
By the short exact sequence \eqref{eq:blowup_gamma}, we have $\zeta|_{Y}=\mathcal{O}_{W}(1)|_{Y}=\mathcal{O}_{Y}(1)$ where
\[
\mathcal{O}_Y(1)=\mathcal{O}_Y(-{\rm Exc}(\Pi|_Y))\otimes (\pi|_Y)^{*}(-f^{*}K_B-D).
\]
Moreover, we know that $\zeta-Y\sim\Pi^{*}T_{S/B}$ and $(\Pi|_Y)_{*}\mathcal{O}_Y(1)=(-f^{*}K_B-D)\otimes\mathcal{I}_{\Gamma}$.

If $f: S\to B$ is a sequence of blowing ups of a conic bundle, then the \emph{total dual VMRT} $\breve\Cc$ on $\PP(T_S)$ associated to the family of conics satisfies \cite{HLS22}
\[
\breve\Cc
= \zeta-\Pi^*T_{S/B}.
\]
Therefore, the total dual VMRT $\breve\Cc$ is equal to the prime divisor $Y$.
Hwang and Ramanan \cite{HR04} first introduced the theory of total dual VMRT in a study of Hecke curves on the moduli of vector bundles on a curve.
Later, Occhetta, Sol\'{a} Conde, and Watanabe \cite{OSW16} generalize the theory to the case of minimal rational curves on uniruled varieties.

\begin{Cor}\label{sing} 
Let $Z$ be a smooth projective surface with a conic bundle structure $Z\to B$.
Let $f: S\to Z$ be the blow-up of $Z$ at points $q_1,\,\ldots,\,q_m$.
Assume that the points $q_{i+1},\,\ldots,\,q_m$ lie on singular points of singular fibers of $Z\to B$. Let $E_i$ be the exceptional curves from blowing up of the points $q_i$ for $i=1,\,\ldots,\,m$.
Let $\breve\Cc$ be the total dual VMRT associated to the strict transforms of conic fibers of $Z$.
Then 
\[
\breve\Cc
= \zeta+\Pi^*f^*K_{Z/B}-\Pi^*E_{i+1}-\cdots-\Pi^*E_m.
\]
\end{Cor}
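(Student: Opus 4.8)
The plan is to read off the class of $\breve\Cc$ from the total dual VMRT formula established just above, $\breve\Cc=\zeta-\Pi^{*}T_{S/B}$, now applied to the composite fibration $g:=h\circ f\colon S\to B$, where $h\colon Z\to B$ denotes the given conic bundle. Using $g^{*}K_{B}=f^{*}h^{*}K_{B}$ together with the blow-up formula $K_{S}=f^{*}K_{Z}+\sum_{j=1}^{m}E_{j}$, the relation $T_{S/B}=-K_{S}+g^{*}K_{B}+D$ rewrites as
\[
T_{S/B}=-f^{*}K_{Z/B}-\textstyle\sum_{j=1}^{m}E_{j}+D ,
\]
so the whole computation reduces to two things: identifying the fixed part $D$ of $g$ (the non-reduced part of its fibres), and correctly comparing the resulting fibration divisor with the total dual VMRT attached to the \emph{strict transforms} of the conics.

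First I would compute $D$ fibre by fibre over the blown-up points. Over a point $q_{j}$ with $j\le i$, which is a general (smooth) point of a fibre of $Z\to B$, the fibre of $g$ acquires the reduced reducible shape $\tilde F+E_{j}$ and contributes nothing to $D$. Over a node $q_{j}$ with $j>i$, the singular conic $L_{1}+L_{2}$ has its node separated and the exceptional curve enters the total transform of the fibre with multiplicity two, so that the scheme-theoretic fibre differs from its reduction exactly along the exceptional curve; running this bookkeeping through the (possibly infinitely near) chain of blow-ups at $q_{i+1},\dots,q_{m}$ and expressing everything in total transforms, I expect to obtain a clean contribution supported on $E_{i+1},\dots,E_{m}$. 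The routine part is the multiplicity accounting for infinitely near nodes, which I would do by induction on the length of the chain.

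The delicate point, and the step I expect to be the main obstacle, is that the total dual VMRT is built from the strict transforms of the conic fibres of $Z$, which do not meet the exceptional curves $E_{j}$ along their general points; hence $\breve\Cc$ should be understood as the strict transform of $\breve\Cc_{Z}=\zeta_{Z}+\Pi_{Z}^{*}K_{Z/B}$ under the birational map $\PP(T_{S})\dashrightarrow\PP(T_{Z})$ induced by $df\colon T_{S}\to f^{*}T_{Z}$, and not merely the fibration divisor $Y$ for $g$. I would therefore pin down $\breve\Cc$ as a prime divisor by comparing it with $Y$ over the exceptional locus: away from $\bigcup_{j}\Pi^{-1}(E_{j})$ the two agree, while over each $E_{j}$ the conic family degenerates and one must subtract the corresponding vertical class $\Pi^{*}E_{j}$. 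Carrying out this local analysis at the nodes, and checking that the contributions over $q_{1},\dots,q_{i}$ are absorbed by the terms coming from $K_{S}$, should give the stated identity
\[
\breve\Cc=\zeta+\Pi^{*}f^{*}K_{Z/B}-\Pi^{*}E_{i+1}-\cdots-\Pi^{*}E_{m}.
\]
As a consistency check I would verify the base case $m=0$, where the formula collapses to $\breve\Cc=\zeta+\Pi^{*}K_{Z/B}=\zeta-\Pi^{*}T_{Z/B}$, the total dual VMRT of the conic bundle itself, and I would confirm that the right-hand side is compatible with $\breve\Cc$ being effective and irreducible as the closure of the tangent-direction section of the conic family.
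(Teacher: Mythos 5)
Your first half is exactly the intended argument: the corollary is meant to fall out of the displayed identity $\breve\Cc=\zeta-\Pi^{*}T_{S/B}$ applied to the composite fibration $g=h\circ f\colon S\to B$, together with precisely the fibre-by-fibre bookkeeping you describe ($g^{*}b$ stays reduced over a blown-up smooth point, while over a blown-up node the exceptional curve enters $g^{*}b$ with multiplicity two, so $D=E_{i+1}+\cdots+E_m$). The genuine flaw is your ``delicate point.'' There is no strict-transform correction to perform: the paragraph preceding the corollary already identifies $\breve\Cc$ with the prime divisor $Y=\mathbb{P}_S(\mathcal{I}_\Gamma)$, and this identification is forced, since $\breve\Cc$ and $Y$ are both irreducible divisors in $\PP(T_S)$ whose restrictions over a general $b\in B$ coincide with $\PP(T_{g^{-1}(b)})$. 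Moreover $Y$, being the blow-up of $S$ along $\mathcal{I}_\Gamma$, is irreducible and has no vertical components over the $E_j$ that could be split off. So the step where you subtract $\Pi^{*}E_j$ ``because the conic family degenerates over $E_j$'' is not a gap awaiting a local analysis; it is an incorrect move, introduced only to force agreement with the printed formula.

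The reason you were pushed into it is that the printed formula is itself off by the exceptional terms: your own (correct) computation gives
\[
\breve\Cc=\zeta-\Pi^{*}T_{S/B}
=\zeta+\Pi^{*}f^{*}K_{Z/B}+\Pi^{*}E_1+\cdots+\Pi^{*}E_i
=\zeta+\Pi^{*}K_{S/B}-\Pi^{*}E_{i+1}-\cdots-\Pi^{*}E_m,
\]
where $K_{S/B}:=K_S-g^{*}K_B=f^{*}K_{Z/B}+\sum_{j=1}^{m}E_j$; the statement as printed has $f^{*}K_{Z/B}$ where $K_{S/B}$ is meant, a discrepancy of $\sum_{j=1}^{m}\Pi^{*}E_j$. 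You can confirm this against the paper's own use of the corollary in Lemma~\ref{lem1_deg4}: there $Z=\Bl_{p_1,\ldots,p_4}\PP^2$ with the pencil $|2H-E_1-\cdots-E_4|$, $m=1$, $i=0$, and $p_5$ is a node of the fibre $(H-E_1-E_3)+(H-E_2-E_4)$, and the computed $\breve\Dd=\zeta+\Pi^{*}(K_S+2[C])-\Pi^{*}E_5$ equals $\zeta+\Pi^{*}f^{*}K_{Z/B}$ with no residual $-\Pi^{*}E_5$, whereas the corollary read literally would give $\zeta+\Pi^{*}f^{*}K_{Z/B}-\Pi^{*}E_5$. So the correct proof is your Serrano computation, full stop, with the conclusion stated for $K_{S/B}$ rather than $f^{*}K_{Z/B}$; note also that your $m=0$ consistency check could never detect this, since the two readings agree exactly when there are no exceptional curves.
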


\begin{Lem}[{\cite[Lemma 2.3]{HLS22}}]
\label{big_lemma}
Let $V$ be a vector bundle on $X$ and $D$ be a big divisor on $X$.
If $V\otimes \Oo_X(-D)$ is pseudo-effective, then $V$ is big.
\end{Lem}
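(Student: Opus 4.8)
The plan is to reduce everything to a comparison of cones of divisor classes on the total space $\PP(V)$. Write $\pi\colon \PP(V)\to X$ for the bundle projection and $\xi:=\Oo_{\PP(V)}(1)$ for the tautological class, so that by definition $V$ is pseudo-effective (resp.\ big) precisely when $\xi$ is pseudo-effective (resp.\ big). First I would translate the hypothesis using the standard identification $\PP(V\otimes\Oo_X(-D))\cong\PP(V)$, under which the tautological class transforms as $\Oo_{\PP(V\otimes\Oo_X(-D))}(1)=\xi-\pi^{*}D$. Thus the assumption that $V\otimes\Oo_X(-D)$ is pseudo-effective says exactly that $\xi-\pi^{*}D$ is a pseudo-effective class on $\PP(V)$.

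Next I would exploit the bigness of $D$. Since the big cone is the interior of the pseudo-effective cone, bigness of $D$ on $X$ provides an ample divisor $H$ on $X$ and a rational number $\delta>0$ with $D-\delta H$ pseudo-effective. Pulling back along the surjective morphism $\pi$ preserves pseudo-effectivity, so $\pi^{*}(D-\delta H)=\pi^{*}D-\delta\,\pi^{*}H$ is pseudo-effective on $\PP(V)$. Adding this to the pseudo-effective class $\xi-\pi^{*}D$ obtained above yields that
\[
\xi-\delta\,\pi^{*}H=(\xi-\pi^{*}D)+\pi^{*}(D-\delta H)
\]
is pseudo-effective on $\PP(V)$.

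It then remains to deduce from ``$\xi-\delta\,\pi^{*}H$ is pseudo-effective for some $\delta>0$'' that $\xi$ is big. Here I would use that $\xi$ is relatively ample for $\pi$: choosing $m\gg0$ makes $A:=\xi+m\,\pi^{*}H$ an ample divisor on $\PP(V)$. For $\epsilon>0$ sufficiently small one checks that $\xi-\epsilon A$ is a positive multiple of $\xi-\delta'\pi^{*}H$ with $0<\delta'\le\delta$, hence is pseudo-effective. Consequently $\xi$ lies in the interior of the pseudo-effective cone, so it is big, i.e.\ $V$ is big.

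The computational core is routine; the step carrying the real content is the last one, converting pseudo-effectivity of $\xi-\delta\,\pi^{*}H$ into bigness of $\xi$. The crux is the relative ampleness of $\xi$, which lets a small negative multiple of a class pulled back from the base be absorbed into bigness on the total space. I expect the only care needed is the bookkeeping of the constants $\delta,\delta',\epsilon,m$ to keep the final class pseudo-effective; there is no genuine geometric obstacle beyond the standard facts that pseudo-effectivity is stable under pullback and that the big cone is the interior of the pseudo-effective cone.
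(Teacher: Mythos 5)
Your proof is correct, and since the paper states this lemma only as a citation to \cite[Lemma 2.3]{HLS22} without reproducing an argument, the relevant comparison is with that source: your route --- identifying $\PP(V\otimes\Oo_X(-D))\cong\PP(V)$ so that the hypothesis becomes pseudo-effectivity of $\xi-\pi^{*}D$, writing the big class $D$ as $\delta H$ plus a pseudo-effective class, and then absorbing $-\delta\,\pi^{*}H$ into $\xi$ via the $\pi$-ampleness of $\xi$ (so that $\xi$ minus a small ample class stays pseudo-effective, forcing $\xi$ into the interior of the pseudo-effective cone) --- is essentially the same standard cone argument as in \cite{HLS22}. The bookkeeping with $\delta,\delta',\epsilon,m$ checks out, so there is nothing to correct.
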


\begin{Lem}[{\cite[Lemma 2.2]{KKL}}]
\label{not_big_lemma}Let $X$ be a smooth projective variety.  
Let $\breve\Cc_i$ be divisorial total dual VMRTs on $\PP(T_X)$ for $i=1,\,2,\,\ldots,\,m$.
If they are summed up to
\[
\sum_{i=1}^m \breve\Cc_i
= k\zeta+\Pi^*D
\]
for some $k>0$ and effective divisor $D$ on $X$ (possibly $D=0$), then $T_X$ is not big.
\end{Lem}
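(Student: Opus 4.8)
The plan is to argue by contradiction through the volume function on $\PP(T_X)$. Suppose $T_X$ is big, so that $\zeta=\Oo_{\PP(T_X)}(1)$ is big, i.e. $\operatorname{vol}(\zeta)>0$, and set $N:=\dim\PP(T_X)=2\dim X-1$. The geometric input I would use is the defining feature of a total dual VMRT recalled in Section~2: each $\breve\Cc_i$ is swept out by a family of rational curves $\hat\ell_i$ that are the tautological lifts of the minimal rational curves $\ell_i:=\Pi_*\hat\ell_i$, and along a general such lift one has $\zeta\cdot\hat\ell_i=0$. In the conic-bundle situation this is exactly the computation behind the identity $\breve\Cc=\zeta-\Pi^*T_{S/B}$ together with $\Oo_Y(1)=\Oo_Y(-\mathrm{Exc}(\Pi|_Y))\otimes(\Pi|_Y)^*(-f^*K_B-D)$: for a general fibre $\ell_i$ one gets $\zeta\cdot\hat\ell_i=(-f^*K_B-D)\cdot\ell_i=0$, and the same holds for the curves produced by Corollary~\ref{sing}. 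Since the $\ell_i$ move in a family covering $X$ and $D$ is effective, a general $\ell_i$ is not contained in $\mathrm{Supp}(D)$, so $D\cdot\ell_i\ge 0$.

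The first step is a base-locus claim: \emph{if $L$ is a big class on $\PP(T_X)$ with $L\cdot\hat\ell_i\le 0$ for every $i$, then $\breve\Cc_i\subseteq B_+(L)$ for every $i$}, where $B_+$ denotes the augmented base locus. Indeed, for any decomposition $L\equiv A+E$ with $A$ ample and $E\ge0$ effective (Kodaira's lemma), the relation $0\ge L\cdot\hat\ell_i=A\cdot\hat\ell_i+E\cdot\hat\ell_i$ forces $E\cdot\hat\ell_i<0$; as the $\hat\ell_i$ cover $\breve\Cc_i$, this gives $\breve\Cc_i\subseteq\mathrm{Supp}(E)$, and intersecting over all such $E$ yields $\breve\Cc_i\subseteq B_+(L)$. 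Equivalently, the restricted volume satisfies $\operatorname{vol}_{X\mid\breve\Cc_i}(L)=0$ for every $i$.

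Now set $L_t:=\zeta-t\sum_i\breve\Cc_i\equiv(1-tk)\zeta-t\,\Pi^*D$ and $v(t):=\operatorname{vol}(L_t)$, and let $J:=\{t\ge0:L_t\text{ is big}\}$. Because $L_t$ restricts to $\Oo(1-tk)$ on a general fibre of $\Pi$, it is not pseudo-effective for $t>1/k$, while $L_{1/k}=-\tfrac1k\Pi^*D$ has volume $0$; hence $J\subseteq[0,1/k)$, and $0\in J$ with $J$ open. For $t\in J$ one computes $L_t\cdot\hat\ell_i=-t\,D\cdot\ell_i\le0$, so the claim applies to $L_t$ and gives $\operatorname{vol}_{X\mid\breve\Cc_i}(L_t)=0$ for all $i$. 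By the differentiability of the volume function on the big cone and the identification of its derivative with restricted volumes (Boucksom--Favre--Jonsson; Ein--Lazarsfeld--Musta\c{t}\u{a}--Nakamaye--Popa), the derivative of $v$ is a negative multiple of $\sum_i\operatorname{vol}_{X\mid\breve\Cc_i}(L_t)=0$, so $v'(t)=0$ on $J$. Thus $v$ is locally constant, hence $v\equiv\operatorname{vol}(\zeta)>0$ on the connected component $[0,T)$ of $0$. If $T<1/k$, continuity of the volume would give $v(T)=\operatorname{vol}(\zeta)>0$, so $L_T$ is big; since $J$ is open this extends the component past $T$, a contradiction. Hence $T=1/k$, and continuity gives $\operatorname{vol}\!\left(-\tfrac1k\Pi^*D\right)=\lim_{t\to 1/k^-}v(t)=\operatorname{vol}(\zeta)>0$. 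But $-\tfrac1k\Pi^*D$ is anti-effective (or zero), hence not big, so its volume is $0$ --- a contradiction. Therefore $\zeta$, and with it $T_X$, is not big.

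I expect the main obstacle to be the geometric input of the first paragraph: rigorously producing, for each divisorial total dual VMRT $\breve\Cc_i$, the covering family of lifted minimal rational curves with $\zeta\cdot\hat\ell_i=0$ in the generality stated (arbitrary $X$ and arbitrary divisorial total dual VMRTs), rather than only in the explicit conic-bundle and blow-up situations of Section~2. Once these $\zeta$-trivial covering curves exist and meet the effective divisor $D$ non-negatively, the remaining steps are formal, resting on standard positivity theory --- Kodaira's lemma, the description of $B_+$ via vanishing of restricted volumes, and the differentiability and continuity of the volume function. So the crux is entirely in verifying the covering-curve property of the total dual VMRT and the inequality $D\cdot\ell_i\ge0$.
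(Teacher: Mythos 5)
Your geometric input is not the problem---the covering $\zeta$-trivial lifts with $D\cdot\ell_i\ge 0$ are exactly what the paper uses (cf.\ the lifted fibers $\widetilde F$ with $\zeta\cdot\widetilde F=0$ in the proof of Theorem~\ref{rational elliptic}), and your base-locus claim via Kodaira decompositions is correct. The fatal flaw is in the step you call ``formal.'' The identification of the derivative of the volume with restricted volumes (ELMNP, Lazarsfeld--Musta\c{t}\u{a}) is valid only in the direction of a divisor \emph{not} contained in $B_+$ of the class; your own Step 1 shows $\breve\Cc_i\subseteq B_+(L_t)$ for every $t\in J$, so you invoke the identification precisely where its hypothesis fails. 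And the conclusion $v'\equiv 0$ is demonstrably false: since $L_t\equiv(1-tk)\zeta-t\,\Pi^*D$ and adding the pseudo-effective class $t\,\Pi^*D$ can only increase volume, one has the elementary bound $v(t)\le\operatorname{vol}\bigl((1-tk)\zeta\bigr)=(1-tk)^{N}\operatorname{vol}(\zeta)$, which is strictly smaller than $\operatorname{vol}(\zeta)$ for $t>0$ whenever $\zeta$ is big. (Consistently, the Boucksom--Favre--Jonsson $C^1$-formula gives $v'(t)=-N\,\langle L_t^{N-1}\rangle\cdot(k\zeta+\Pi^*D)\le -Nk\,\langle L_t^{N-1}\rangle\cdot\zeta<0$ on $J$: the positive product $\langle L_t^{N-1}\rangle\cdot\breve\Cc_i$ need not agree with the vanishing restricted volume when $\breve\Cc_i\subseteq B_+(L_t)$.) So under the bigness hypothesis the volume simply decays to $0$ as $t\to 1/k$, $J$ is an interval on which nothing singular happens, and no contradiction is ever produced: the argument collapses.

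The repair is short and is exactly the KKL argument that the paper cites instead of reproving: your first paragraph already contains all the needed numerics. Assume $\zeta$ big and write $\zeta\equiv A+E$ with $A$ ample and $E\ge 0$ (Kodaira). Your Step 1 forces $\breve\Cc_i\subseteq\operatorname{Supp}(E)$ for all $i$, so $E=\sum_i c_i\breve\Cc_i+E''$ with every $c_i>0$ and no $\breve\Cc_i$ a component of $E''$. Put $c=\min_i c_i$, attained at $i_0$, and rewrite $\zeta\equiv A+c\sum_i\breve\Cc_i+E_1$ where $E_1\ge 0$ does not contain $\breve\Cc_{i_0}$. Now intersect with a general covering curve $\hat\ell$ of $\breve\Cc_{i_0}$: the left side is $\zeta\cdot\hat\ell=0$, while on the right $\sum_i\breve\Cc_i\cdot\hat\ell=(k\zeta+\Pi^*D)\cdot\hat\ell=D\cdot\ell\ge 0$ (the curves $\ell$ cover $X$ and $D$ is effective) and $E_1\cdot\hat\ell\ge 0$, whence $0\ge A\cdot\hat\ell>0$, a contradiction. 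Note how the hypothesis $\sum_i\breve\Cc_i=k\zeta+\Pi^*D$ is used to control the self-intersection term $\breve\Cc_{i_0}\cdot\hat\ell$, which is the only delicate point; no volume machinery is needed.
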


\begin{Lem}[{\cite[Lemma 2.4]{HLS22}}]
\label{domination}
Let $f:X\to Z$ be a birational morphism between smooth projective varieties.
If $T_X$ is big, then $T_Z$ is big.
\end{Lem}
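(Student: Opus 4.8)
The plan is to compare the graded rings of sections of $\Sym^{\bullet}T_X$ and $\Sym^{\bullet}T_Z$ through the differential of $f$, thereby reducing the statement to a count of sections. The starting point is the canonical differential morphism $df\colon T_X\to f^{*}T_Z$, obtained by dualizing the pullback map $f^{*}\Omega_Z^{1}\to\Omega_X^{1}$ on cotangent sheaves. Since $f$ is birational, there is a dense open subset $U\subseteq X$ on which $f$ restricts to an isomorphism onto its image, and over $U$ the morphism $df$ is an isomorphism. As $T_X$ is locally free, hence torsion-free, any morphism out of $T_X$ that is injective on a dense open set has trivial kernel; thus $df$ is injective as a morphism of sheaves.

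Next I would pass to symmetric powers. For each $m\ge 1$ the induced map $\Sym^{m}df\colon \Sym^{m}T_X\to f^{*}\Sym^{m}T_Z$ is still an isomorphism over $U$, and since $\Sym^{m}T_X$ is torsion-free the same reasoning shows that $\Sym^{m}df$ is injective. Taking global sections yields an injection
\[
H^{0}(X,\Sym^{m}T_X)\hookrightarrow H^{0}(X,f^{*}\Sym^{m}T_Z).
\]
Because $f$ is a birational morphism between smooth projective varieties, $Z$ is normal and $f_{*}\Oo_X=\Oo_Z$; the projection formula then gives $f_{*}f^{*}\Sym^{m}T_Z=\Sym^{m}T_Z$, so that $H^{0}(X,f^{*}\Sym^{m}T_Z)=H^{0}(Z,\Sym^{m}T_Z)$. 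Combining these, I obtain
\[
h^{0}(X,\Sym^{m}T_X)\le h^{0}(Z,\Sym^{m}T_Z)\qquad\text{for all }m\ge 1.
\]

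Finally I would translate these section counts into the bigness statement. Writing $n=\dim X=\dim Z$, the projectivized bundles $\PP(T_X)$ and $\PP(T_Z)$ both have dimension $2n-1$, and by the Grothendieck convention $H^{0}(\PP(T_X),\Oo_{\PP(T_X)}(m))=H^{0}(X,\Sym^{m}T_X)$, and likewise for $Z$. Bigness of $T_X$ means that $\zeta=\Oo_{\PP(T_X)}(1)$ is big, i.e.\ $h^{0}(X,\Sym^{m}T_X)$ grows on the order of $m^{2n-1}$. The displayed inequality then forces $h^{0}(Z,\Sym^{m}T_Z)$ to grow at the same maximal rate, so the tautological class on $\PP(T_Z)$ is big and $T_Z$ is big.

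The steps that are essentially routine are the construction of $df$ and the projection-formula identification $f_{*}\Oo_X=\Oo_Z$. The point requiring the most care, and the one I expect to be the main (if mild) obstacle, is the injectivity of $\Sym^{m}df$ and its persistence on global sections: one must verify that no section of $\Sym^{m}T_X$ is annihilated, which is exactly guaranteed by torsion-freeness of $\Sym^{m}T_X$ together with the density of $U$, and one must apply the growth characterization of bigness in the correct dimension $2n-1$.
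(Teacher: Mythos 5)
Your argument is correct and is essentially the standard proof of this lemma (which the paper itself quotes from \cite{HLS22} without reproving): the generically isomorphic tangent map $df\colon T_X\to f^*T_Z$ is injective by torsion-freeness, the projection formula with $f_*\Oo_X=\Oo_Z$ gives $h^0(X,\Sym^m T_X)\le h^0(Z,\Sym^m T_Z)$, and the growth criterion for bigness on the $(2n-1)$-dimensional projectivizations concludes. No gaps; the one point worth stating explicitly is that $f^*\Sym^m T_Z=\Sym^m f^*T_Z$ because $T_Z$ is locally free, which you implicitly use.
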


\begin{Lem}
\label{specialization}
Let $X_0$ be a smooth projective variety which is a specialization of smooth projective varieties $X_t$, i.e., there is a smooth morphism $f: \mathcal{X}\to B$ over a DVR $B$ such that the general fiber is $X_t$ and the central fiber is $X_0$. 
Then we have the following.
\begin{enumerate}
\item
If $T_{X_t}$ is big for $t\neq 0$, then $T_{X_0}$ is big.
\item
Assume that the irregularity of $X_0$ is zero.
If $T_{X_t}$ is is pseudo-effective for $t\neq 0$, then $T_{X_0}$ is pseudo-effective.
\end{enumerate}
\end{Lem}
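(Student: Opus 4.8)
The plan is to deduce both parts from the upper semicontinuity of cohomology applied to the relative projectivization of the tangent bundle. Since $f$ is smooth, the relative tangent sheaf $T_{\mathcal{X}/B}$ is locally free and $T_{\mathcal{X}/B}|_{X_t}=T_{X_t}$ for every fiber. Set $\mathcal{W}:=\PP(T_{\mathcal{X}/B})$ with projection $\Pi\colon\mathcal{W}\to\mathcal{X}$ and structure morphism $g\colon\mathcal{W}\to B$; this is projective with $g^{-1}(t)=\PP(T_{X_t})$, and the tautological bundle $\zeta=\Oo_{\mathcal{W}}(1)$ restricts on each fiber to the tautological bundle $\zeta_t$ of $\PP(T_{X_t})$. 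Recall that $T_{X_t}$ is big (resp. pseudo-effective) exactly when $\zeta_t$ is. As $g$ is flat and proper, for every line bundle $\mathcal{L}$ on $\mathcal{W}$ the function $t\mapsto h^0(g^{-1}(t),\mathcal{L}|_{g^{-1}(t)})$ is upper semicontinuous, so the special fiber has at least as many sections as the generic fiber:
\[
h^0\bigl(\PP(T_{X_0}),\mathcal{L}_0\bigr)\ \ge\ h^0\bigl(\PP(T_{X_t}),\mathcal{L}_t\bigr).
\]

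For part (1) I would apply this to $\mathcal{L}=m\zeta$. Since $h^0(\PP(T_{X_t}),m\zeta_t)=h^0(X_t,\Sym^m T_{X_t})$ and bigness of $\zeta_t$ means this grows like $m^{\dim\PP(T_{X_t})}=m^{2\dim X_t-1}$, semicontinuity forces the same growth for $h^0(\PP(T_{X_0}),m\zeta_0)$. Equivalently $\operatorname{vol}(\zeta_0)\ge\operatorname{vol}(\zeta_t)>0$, so $\zeta_0$ is big and $T_{X_0}$ is big.

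For part (2) I would first convert pseudo-effectiveness into bigness by a perturbation. Choose a $g$-ample divisor $H$ on $\mathcal{W}$, for instance $H=\zeta+k\Pi^{*}A$ with $A$ an $f$-ample divisor on $\mathcal{X}$ and $k\gg0$, so that $H_t:=H|_{g^{-1}(t)}$ is ample on $\PP(T_{X_t})$ for every $t$. Since $\zeta_t$ is pseudo-effective and $H_t$ is ample, $\zeta_t+\varepsilon H_t$ is big for every rational $\varepsilon=p/q>0$; that is, the line bundle $q\zeta+pH$ is big on the generic fiber. Applying the semicontinuity of part (1) to $\mathcal{L}=q\zeta+pH$ shows that its restriction $q\zeta_0+pH_0$ is big on $\PP(T_{X_0})$, hence $\zeta_0+\varepsilon H_0$ is big for all rational $\varepsilon>0$. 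Letting $\varepsilon\to0$ and using that the pseudo-effective cone is closed, $\zeta_0$ is pseudo-effective, so $T_{X_0}$ is pseudo-effective.

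The delicate point is entirely in part (2): pseudo-effectiveness is a closed, numerical condition that may be witnessed by no global sections at all, so it cannot be specialized by semicontinuity directly — the perturbation by the ample $H$ is exactly what turns it into the bigness statement that semicontinuity can handle, and I expect this conversion to be the main thing to get right. This is also where the hypothesis $q(X_0)=0$ enters: by upper semicontinuity of $h^1(\Oo_{X_t})$ it propagates to $q(X_t)=0$ for all $t$, so $\operatorname{Pic}^0$ of every fiber vanishes, the Néron–Severi groups of the fibers are discrete, and the tautological classes together with the effective cones are compared along the family by a single specialization homomorphism sending $\zeta_t$ to $\zeta_0$. This legitimizes treating ``$\zeta_t$ pseudo-effective'' and ``$\zeta_0$ pseudo-effective'' as statements about compatible classes; without the vanishing of the irregularity the continuous part of the Picard scheme could degenerate and this comparison would break down. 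In the applications of the paper the fibers are rational, so $q=0$ holds automatically.
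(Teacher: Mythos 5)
Your overall strategy is the paper's: the authors' entire proof is the remark that both parts follow from upper semicontinuity of cohomology, and your part (1) — apply semicontinuity to $m\zeta$ on the relative projectivization $\PP(T_{\mathcal{X}/B})\to B$ and compare growth rates — is exactly that argument, correctly executed. The skeleton of your part (2) (perturb $\zeta$ by an ample class to convert pseudo-effectivity into bigness, specialize bigness by semicontinuity, then let the perturbation go to zero and use closedness of the pseudo-effective cone) is also the intended route.

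The gap sits in part (2), at precisely the step you flagged as delicate: your very first move is to ``choose a $g$-ample divisor $H=\zeta+k\Pi^*A$ with $A$ an $f$-ample divisor on $\mathcal{X}$.'' The hypotheses give only a smooth morphism with projective fibers; they do not give a relatively ample line bundle on the total space, and a smooth proper family over a DVR with projective fibers need not be a projective morphism. Producing such an $A$ is exactly what the irregularity hypothesis is for: as the authors' proof says, the assumption in (2) is needed ``for the extension of a line bundle on $X_0$ to a line bundle on $\mathcal{X}$'' --- one extends an ample line bundle $A_0$ from the central fiber across the family (the relevant cohomological obstructions vanish for the surfaces in play, where $h^1(\Oo)=h^2(\Oo)=0$), and its restriction to every fiber over the DVR is then ample by openness of ampleness; only then does your $H$ exist. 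By contrast, the justification you offer for the hypothesis --- vanishing of $\operatorname{Pic}^0$, discreteness of the N\'eron--Severi groups, a ``single specialization homomorphism'' comparing cones --- never actually enters your argument: as written, your construction uses nothing about $X_0$ beyond projectivity of $f$, so the hypothesis $q(X_0)=0$ does no work in your proof, which is the telltale sign that you have silently assumed something (projectivity of the family) not granted by the statement. Once $A$ is obtained by extension from $X_0$, the remainder of your part (2) --- pseudo-effective plus ample is big on the general fiber, semicontinuity gives bigness of $\zeta_0+\varepsilon H_0$, and $\varepsilon\to 0$ concludes --- is correct.
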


\begin{proof}
Proofs are basically obtained by the upper semi-continuity of the dimension of cohomology.
The assumption in (2) is necessary for the extension of a line bundle on $X_0$ to a line bundle on $\mathcal X$.
\end{proof}

We recall Maruyama's elementary transformation.
Let $h\colon X_2\to X_1$ be a blow-up between smooth projective varieties with the exceptional divisor $D$ in $X_2$.
Let $\xi_i$ be the tautological divisor of the projective bundle $\PP(T_{X_i})$ and $\Pi_i: \PP(T_{X_i})\to X_i$  the natural projection.
There is a natural short exact sequence
\[ 0\longrightarrow T_{X_2}\longrightarrow h^{*}T_{X_1} \longrightarrow \iota_{*}T_{D}(D)\longrightarrow 0. \]
Here, $T_D(D)\simeq \mathcal{O}_D(1)$ and $\iota:D\rightarrow X_2$ is the natural inclusion. Let $\widetilde{D}: =\mathbb{P}_D(T_D(D))$ be the projective subbundle of $D': =\mathbb{P}_D(h^{*}T_{X_1})$ obtained by the exact sequence above. 
Applying Maruyama's elementary transformation \cite[Theorem~1.4 and (1.7)]{Mar82}, we have the following commutative diagram, where $\beta$ is the blow-up of $\mathbb{P}(h^{*}T_{X_1})$ along $\widetilde{D}$
and $\alpha$ is the blow-down of $\Bl_{\widetilde{D}}(\mathbb{P}(h^{*}T_{X_1}))$ along the $\beta$-strict transform of $D'$.
\[\xymatrix{
& & \Bl_{\widetilde{D}}(\mathbb{P} (h^{*}T_{X_1}))\ar[dl]_{\beta}\ar[dr]^{\alpha}\\
\mathbb{P}(T_{X_1})\ar[d]^{\pi_1}&\mathbb{P}(h^{*}T_{X_1})\ar@{-->}[rr]\ar[d]^{\widetilde{\pi}_1}\ar[l]_{\widetilde{h}} & & \mathbb{P}(T_{X_2})\ar[d]^{\pi_2}\\
X_1 & X_2\ar@{=}[rr]\ar[l]_{h} & & X_2
}\]
By \cite[Theorem~1.4 and (1.7)]{Mar82}, we have $\beta^{*}\widetilde{h}^{*}\xi_1=\alpha^{*}\xi_2+G$ where $G$ is the $\beta$-exceptional divisor.

\begin{Lem}\label{effetive_divisor_lemma}
Let $Z$ be a smooth projective variety of dimension $n$ and $f: X\to Z$ be the blow-up of $Z$ along a smooth subscheme $S\subset Z$ of codimension $2$ (which is not necessarily irreducible).
We~denote by $\zeta$ (resp. $\xi$) the tautological line bundle on the projective bundle $\Pi: \PP(T_X)\to X$ (resp. $\Lambda: \PP(T_Z)\to Z$) associated to $T_X$ (resp. $T_Z$), and by $E$ the exceptional divisor of $f: X\to Z$.
\begin{itemize}
\item[(1)] 
If $\xi+\Lambda^*D$ is effective on $\PP(T_Z)$ for some divisor $D$ on $Z$, then the divisor
\[
\zeta+\Pi^*f^*D+\Pi^*E
\]
is effective on $\PP(T_X)$.
\item[(2)]
If $h^0(\PP(T_Z),\Oo_{\PP(T_Z)}(m(\xi+\Lambda^*D)))\sim m^{n}$ for some divisor $D$ on $Z$, then the divisor
\[
k\zeta+k\Pi^*f^*D+(k-1)\Pi^*E
\]
is effective on $\PP(T_X)$ for some $k>0$.
\end{itemize}
\end{Lem}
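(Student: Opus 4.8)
The plan is to read both parts as statements about twisted symmetric powers of $T_X$ via the projection formula, and then to transport global sections from $Z$ to $X$ through the natural inclusion of $T_X$ into $f^*T_Z$. The first step I would take is to record the two-sided inclusion of sheaves that drives the whole argument. The Maruyama sequence for the blow-up $f\colon X\to Z$ reads
\[
0\longrightarrow T_X\longrightarrow f^*T_Z\stackrel{q}{\longrightarrow}\iota_*T_E(E)\longrightarrow 0,
\]
where $\iota\colon E\hookrightarrow X$ and $T_E(E)\cong\Oo_E(1)$. Since the quotient $\iota_*T_E(E)$ is annihilated by $\Oo_X(-E)$, for the canonical section $e$ of $\Oo_X(E)$ and any local section $t$ of $f^*T_Z$ we have $q(e\cdot t)=e\cdot q(t)=0$; hence $f^*T_Z(-E)\subseteq T_X$, and twisting by $\Oo_X(E)$ gives the chain $T_X\subseteq f^*T_Z\subseteq T_X(E)$ inside $f^*T_Z(E)$. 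Taking symmetric powers yields $f^*\Sym^kT_Z\subseteq(\Sym^kT_X)(kE)$ for every $k$.

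For part (1), the projection formula gives $H^0(\PP(T_X),\zeta+\Pi^*f^*D+\Pi^*E)=H^0(X,T_X(f^*D+E))$, while effectiveness of $\xi+\Lambda^*D$ on $\PP(T_Z)$ means precisely $H^0(Z,T_Z(D))\neq0$. Choosing $0\neq s\in H^0(Z,T_Z(D))$, its pullback $f^*s$ is a nonzero element of $H^0(X,f^*T_Z(f^*D))$, and the inclusion $f^*T_Z\subseteq T_X(E)$ (twisted by $\Oo_X(f^*D)$) places $f^*s$ in $H^0(X,T_X(f^*D+E))$. Thus the latter group is nonzero and $\zeta+\Pi^*f^*D+\Pi^*E$ is effective.

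For part (2), the same computation reduces the claim to producing, for some $k>0$, a nonzero section of $\Sym^kT_X(kf^*D+(k-1)E)$. The hypothesis says $h^0(Z,\Sym^kT_Z(kD))\sim k^n$. I would look for a section $\sigma\in H^0(Z,\Sym^kT_Z(kD))$ vanishing along the center $S$: from $0\to\Sym^kT_Z(kD)\otimes\Ii_S\to\Sym^kT_Z(kD)\to(\Sym^kT_Z(kD))|_S\to0$ such a $\sigma$ exists as soon as $h^0\big(S,(\Sym^kT_Z(kD))|_S\big)<h^0(Z,\Sym^kT_Z(kD))$. Since $S$ has dimension $n-2$ (in the surface case of interest $S$ is a finite set of points, so this number is $O(k)$), the left-hand side grows strictly more slowly than $k^n$, and such a $\sigma$ exists for $k\gg0$. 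Now $f^*\sigma$ lies a priori in $H^0(X,\Sym^kT_X(kf^*D+kE))$ via $f^*\Sym^kT_Z\subseteq(\Sym^kT_X)(kE)$; but its restriction to $E$ factors through $g^*(\sigma|_S)$, where $g=f|_E\colon E\to S$, hence vanishes. Therefore $f^*\sigma$ is divisible by $e$ and lies in $H^0(X,\Sym^kT_X(kf^*D+(k-1)E))$, which is exactly what we need.

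The step I expect to be the main obstacle is the dimension estimate in part (2): one must guarantee that imposing vanishing along the center costs strictly fewer than $k^n$ conditions, so that a section in $H^0(Z,\Sym^kT_Z(kD)\otimes\Ii_S)$ survives. This is exactly where both the sharp growth hypothesis $\sim m^n$ and the codimension-two assumption on $S$ enter, and it is responsible for the gain of precisely one unit of $E$ (from $kE$ down to $(k-1)E$).
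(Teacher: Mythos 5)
Your part (1) is correct. The inclusion $f^*T_Z(-E)\subseteq T_X$ obtained from the blow-up sequence (the quotient $\iota_*T_E(E)$ is an $\Oo_E$-module, so it is killed by $\Ii_E$), hence $T_X\subseteq f^*T_Z\subseteq T_X(E)$, combined with the projection formula, gives the statement directly. This is a clean, more elementary substitute for the route the paper sets up just before the lemma, namely Maruyama's elementary transformation with the identity $\beta^*\widetilde{h}^*\xi_1=\alpha^*\xi_2+G$: there one pulls the effective divisor in $|\xi+\Lambda^*D|$ back to $\Bl_{\widetilde D}(\PP(h^*T_{X_1}))$ and pushes forward along $\alpha$, using $\alpha_*G=\Pi^*E$. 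Both arguments prove (1); yours avoids the birational bookkeeping entirely.

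Part (2), however, has a genuine gap at exactly the step you flagged. You impose vanishing of $\sigma$ along all of $S$, at a cost of $h^0\bigl(S,(\Sym^kT_Z(kD))|_S\bigr)$ conditions, and assert this is $o(k^n)$ ``since $\dim S=n-2$.'' That estimate ignores the rank, which grows like $k^{n-1}$: since $h^0\bigl(S,(\Sym^kT_Z(kD))|_S\bigr)=h^0\bigl(\PP(T_Z|_S),\Oo(k)\otimes\cdots\bigr)$ and $\dim\PP(T_Z|_S)=2n-3$, the cost can grow like $k^{2n-3}$, which for $n\ge3$ is at least $k^n$. Already for $n=3$ ($S$ a curve, rank $\sim k^2$) the conditions can be of order $k^3$, the same order as your supply of sections, so no nonzero $\sigma$ is guaranteed; your argument is complete only for $n=2$, where $S$ is a finite set of points and the cost is $O(k)$ (the case the paper actually uses), but not for the lemma as stated. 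The repair is that full vanishing along $S$ is far more than divisibility requires: in local coordinates $x,y,z_3,\dots,z_n$ with $S=\{x=y=0\}$ and chart $x=u$, $y=uv$, a monomial $\partial_x^a\partial_y^b\partial_z^c$ pulls back with pole order at most $a+b$ along $E$, so only the terms with $a+b=k$ obstruct dividing once by $u$; that is, it suffices that the image of $\sigma|_S$ under the canonical surjection $\Sym^kT_Z(kD)|_S\twoheadrightarrow\Sym^kN_{S/Z}\otimes\Oo_S(kD)$ vanish. This quotient has rank $k+1$ over the $(n-2)$-dimensional $S$, so the number of conditions is $h^0\bigl(\PP(N_{S/Z}),\cdots\bigr)=O(k^{n-1})=o(k^n)$, and the argument then closes for all $n$ --- consistent with the paper's elementary-transformation picture, where the obstruction is supported on the $(n-1)$-dimensional locus $\widetilde{h}(\widetilde D)\subset\PP(T_Z)$ rather than on the $(2n-3)$-dimensional $\Lambda^{-1}(S)$.
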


\section{Rational elliptic surfaces}

In this section, we study the positivity of $T_S$ for smooth projective rational elliptic surfaces $S$. 

\begin{proof}[{\bf Proof of Theorem~\ref{rational elliptic}}]
By Lemma~\ref{domination}, it is enough to show that $T_S$ is not big when $S$ is a relatively minimal rational elliptic surface.

Let $S$ be a relatively minimal rational elliptic surface.
As we explained briefly in Section 2, by considering an elliptic fibration $f: S\to B=\PP^1$,
we have the equation $Y=\zeta- \Pi^{*}T_{S/B}$ where $Y$ is a prime divisor on $W: =\PP(T_S)$ and $T_{S/B}=-K_{S}+f^{*}K_{B}+D$ where
\[
D:=\sum_{b\in B}f^{*}b-(f^{*}b)_{\rm red} = \sum_{i\in I} (\nu_{i}-1)E_{i} + \sum_{j\in J} (m_{j}-1)F_{j} = E_{0} + \sum_{j\in J} (m_{j}-1)F_{j}
\] 
where $E_0: =\sum_{i\in I} (\nu_i-1)E_i$ comes from the non-multiple non-reduced fibers (\cite{Ser92} or \cite[Section~3]{Ser96}).

We first consider the case when $S$ has a section.
In this case, $f: S\to \PP^1$ has no multiple fiber, $-K_S=F$ where $F$ is a fiber, and $D=E_{0}=\sum_{i\in I} (\nu_i-1)E_i$ comes from the non-multiple non-reduced fibers.

By using Kodaira's classification on singular elliptic fibers and $\chi_{\rm top}(S)=12$, the non-multiple non-reduced fibers can occur only in one fiber except two $I_0^*$ (cf. \cite[Section 7 in Chapter 5]{BHPV}).
So, except the case of two $I_0^*$, we have $Y= \zeta+ \Pi^*(\text{non-zero effective divisors})$.
Consider the exact sequence of the normal bundle of an elliptic fiber $F$,
\[
0\to T_F=\Oo_F\to T_S|_F \to N_{F|S}=\Oo_F\to 0.
\]
Then the lifting $\widetilde F$ of $F$ to $\PP(T_S)$ satisfies that $\zeta . \widetilde F=0$ and $Y$ is dominated by a family of these lifting elliptic curves.
Then, by the same proof of Lemma 2.2 in \cite{KKL}, $T_S$ is not big.
Now consider the case of two $I_0^*$.
Then we have $Y= \zeta+ \Pi^*(F-E_1-E_2)$ where each $2E_i$ for $i=1, 2$ is the unique non-reduced component in $I_0^*$.
So $Y= \zeta+ \Pi^*(E_1+\text{four reduced components in $I_0^*$}-E_2)$ and $Y= \zeta+ \Pi^*(E_2+\text{four reduced components in $I_0^*$}-E_1)$.
Therefore, $2Y= 2\zeta+\Pi^*(\text{non-zero effective divisors})$, which implies again that $T_S$ is not big.

We next consider the case when $S$ has no section.
Then $S$ has only one multiple fiber $F_1$, and the construction of $S$ is well-known via a Halphen pencil.
In this case, $-K_S=F-(m-1)F_1$ where $m$ is the multiplicity of the multiple fiber $F_1$ and the same $\chi_{\rm top}(S)=12$.
So,
\[
T_{S/B}=-K_S+f^*K_{B}+D=-F+E_0.
\]
Then we have again $Y= \zeta-\Pi^*T_{S/B}=\zeta+F-E_0$, so we get the same equation for $\zeta$ when $S$ has a section.
Therefore, $T_S$ is not big even if $S$ has no section.
\end{proof}

\begin{Rmk}
Since every relatively minimal rational elliptic surface with a section is obtained by blowing up at the unique base point of the anticanonical linear system of a weak del Pezzo surface of degree $1$, and by the upper semicontinuity of the dimension of cohomology under specialization, $T_S$ is not big if $S$ is a rational surface obtained by blowing up at a general point of a weak del Pezzo surface of degree $1$.
Therefore, if $S$ is a general element in the moduli of rational surfaces with nef and not big anticanonical divisor, then $T_S$ is not big.
\end{Rmk}

\section{Weak Del Pezzo surfaces}

In this section, we study some properties of weak del Pezzo surfaces $S$ related to the bigness of the tangent bundle $T_S$.

Let $S$ be a smooth projective surface given by blowing up a set of \emph{bubble points} $p_1,\ldots,p_r$ of $\mathbb P^2$. 
For the notion of bubble points (or bubble cycles) on surfaces, refer to \cite[Section 7.3.2]{Dol12}.
According to the notation, we denote $p_j \succ p_i$ to indicate that $p_j$ is infinitely near to $p_i$, and the blow-up at $p_j\succ p_i$ is given by a successive blow-up; the blow-up at $p_i$, followed by the blow-up at $p_j$.

We denote the divisors on $S$ as follows.
\begin{itemize}
\item
$H$ is the total transform of $\Oo_{\PP^2}(1)$.
\item
$E_i$ is the strict transform of exceptional divisor over $p_i$.
\end{itemize}
We say that a set $\{p_{\iota_1},\ldots, p_{\iota_s}\}$ of distinct points among $\{p_1,\ldots,p_r\}$ is \emph{collinear} if the divisor $H-\sum_{k=1}^sE_{\iota_k}$ is effective on $S$.

\begin{Ex}\label{succesive_blow-up}
Let $p_1,\,\ldots,\,p_6$ be six bubble points in $\PP^2$ given by $p_4\succ p_1$, $p_5\succ p_2$, $p_6\succ p_3$, and $\{p_1,\,p_2,\,p_4\}$, $\{p_2,\,p_3,\,p_5\}$, $\{p_1,\,p_3,\,p_6\}$ are collinear.
Then the blow-up surface $S=\Bl_{p_1,\ldots,p_6}\PP^2$ becomes a smooth projective surface of degree $3$ which has three $(-1)$-curves and six $(-2)$-curves with a configuration of type $3A_2$.
In the figure below, we describe a closed point by a circle, and an infinitely near point by an arrow attached to the associated point.
Moreover, we describe the configuration of $(-1)$ and $(-2)$-curves on $S$ by thin and thick lines, respectively.

\vspace{-0.5em}
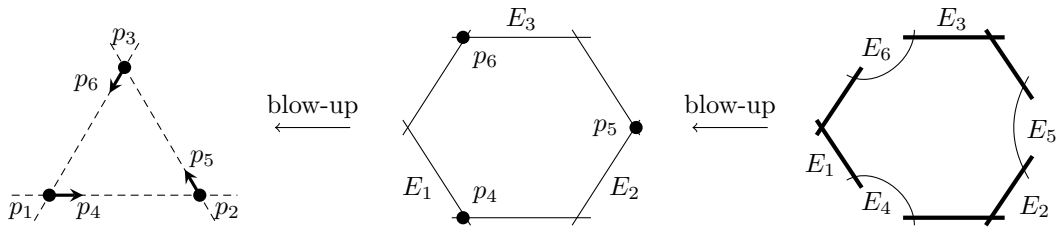
\begin{figure}[ht]
\begin{tikzpicture}[scale=1]
\begin{scope}[xshift=-5.5cm]
\node[circle,fill=black,inner sep=0pt,minimum size=5pt] at (0,0) {};
\node[below left=3pt of {(0,0.1)}] {$p_1$};
\node[below left=3pt of {(0.9,0.1)}] {$p_4$};
\node[circle,fill=black,inner sep=0pt,minimum size=5pt] at (2,0) {};
\node[below right=3pt of {(2,0.1)}] {$p_2$};
\node[above right=10pt of {(1.5,0)}] {$p_5$};
\node[circle,fill=black,inner sep=0pt,minimum size=5pt] at (1,1.7) {};
\node[left=3pt of {(1.4,2.1)}] {$p_3$};
\node[left=3pt of {(0.9,1.5)}] {$p_6$};
\draw[very thick,color=black,-stealth] (0,0)--(0.45,0);
\draw[very thick,color=black,-stealth] (2,0)--(1.8,0.34);
\draw[very thick,color=black,-stealth] (1,1.7)--(0.8,1.36);

\draw[densely dashed,color=black] (-0.5,0)--(2.5,0);
\draw[densely dashed,color=black] (-0.2,-0.34)--(1.2,2.04);
\draw[densely dashed,color=black] (2.2,-0.34)--(0.8,2.04);
\end{scope}

\begin{scope}[xshift=0cm,yshift=-0.3cm]
\def\s{0.45};
\draw (-0.15,0)--(2.15-\s,0);
\draw (-0.15,2.4)--(2.15-\s,2.4);

\draw (0.1,-0.1)--(-0.7-0.1,1.2+0.1);
\draw (-0.7-0.1,1.2-0.1)--(+0.1,2.4+0.1);
\draw (2-\s-0.1,-0.1)--(2-\s+0.7+0.1,1.2+0.1);
\draw (2-\s+0.7+0.1,1.2-0.1)--(2-\s-0.1,2.4+0.1);

\node[circle,fill=black,inner sep=0pt,minimum size=5pt] at (0,0) {};
\node[circle,fill=black,inner sep=0pt,minimum size=5pt] at (2-\s+0.75,1.2) {};
\node[circle,fill=black,inner sep=0pt,minimum size=5pt] at (0,2.4) {};

\node at (-0.6,0.4) {$E_1$};
\node at (2-\s+0.6,0.4) {$E_2$};
\node at (1-\s/2,2.65) {$E_3$};

\node at (0.3,0.3) {$p_4$};
\node at (1.9,1.2) {$p_5$};
\node at (0.3,2.1) {$p_6$};

\draw [->] (-1.5,1.2) -- node [above] {blow-up} (-2.5,1.2);
\draw [->] (2+0.7-\s+0.8+1,1.2) -- node [above] {blow-up} (2+0.7-\s+0.8,1.2);
\end{scope}

\begin{scope}[xshift=5.5cm,yshift=-0.3cm]
\def\s{0.45};
\draw[ultra thick] (0.35,0)--(2.15-\s,0);
\draw[ultra thick] (0.35,2.4)--(2.15-\s,2.4);

\draw[ultra thick] (0.1-0.3,-0.1+0.5)--(-0.7-0.1,1.2+0.1);
\draw[ultra thick] (-0.7-0.1,1.2-0.1)--(+0.1-0.3,1.9+0.1);

\draw[ultra thick] (2-0.1-\s,-0.1)--(2+0.7+0.1 -0.28-\s,1.2+0.1 -0.48);
\draw[ultra thick] (2+0.7+0.1 -0.28-\s,1.2-0.1 +0.48)--(2-0.1-\s,2.4+0.1);

\draw (2+0.7+0.05 -0.28-\s,1.2-0.2 -0.48) to[out=120,in=-120] (2+0.7+0.05 -0.28-\s,1.2+0.2 +0.48);
\draw (-0.3-0.1,0.5) to[out=30,in=90] (0.5,0-0.1);
\draw (-0.3-0.1,2.4-0.5) to[out=-30,in=-90] (0.5,2.4+0.1);

\node at (-0.75,0.7) {$E_1$};
\node at (2.1,0.15) {$E_2$};
\node at (1,2.65) {$E_3$};
\node at (0,0.2) {$E_4$};
\node at (2.2,1.15) {$E_5$};
\node at (0,2.25) {$E_6$};
\end{scope}
\end{tikzpicture}
\vspace{-1em}
\caption{Configuration of $(-1)$ and $(-2)$-curves of type $3A_2$ (degree $3$)}
\label{fig:3A_2}
\vspace{-0.5em}
\end{figure}
\end{Ex}

\begin{Def}
A smooth projective surface $S$ is called \emph{a weak del Pezzo surface} if the canonical divisor $-K_S$ is big and nef.
\end{Def}

\begin{Rmk}
The classification of weak del Pezzo surfaces $S$ through the configuration of $(-2)$-curves and the number of $(-1)$-curves is well-known (cf. \cite{Dol12}).
Let $C$ be an irreducible curve in $S$.
Since $-K_S$ is nef and big, $K_S\cdot C <0$ unless $C$ is a $(-2)$-curve.
So if $C^2\le 0$ then $C$ is either a $(-1)$-curve or a $(-2)$-curve. 

To check the nefness of a line bundle $A$ on a weak del Pezzo surface $S$, it is enough to check $A\cdot C \ge 0$ for all $(-1)$ and $(-2)$-curves $C$ on $S$ by the cone theorem (cf. \cite{KM98}, or [\cite{San14} Proposition 4.1]).
Let $S$ be a weak del Pezzo surface and a line bundle $A=-K_S+\sum a_iF_i$ where $a_i\in\QQ$ and $F_i$ consists of $(-1)$ and $(-2)$-curves on $S$.
Then $A$ is ample on $S$ if $A\cdot C > 0$ for all $(-1)$ and $(-2)$-curves $C$ on $S$ and $A^2 > 0$ by the Nakai-Moishezon criterion and the Kleiman's ampleness criterion (cf. \cite{KM98}).
Since $\zeta$ is relatively ample with respect to the map $\Pi$, $\zeta+\alpha\Pi^*A$ is nef (in fact, it is ample) for sufficiently large $\alpha$ if $A$ is ample.
\end{Rmk}

\begin{Prop}\label{fibration3}
Let $S$ be a weak del Pezzo surface of degree $3$. Let $\ell$ be a line, i.e., $\ell$ is isomorphic to $\PP^1$ and $-K_S.\ell=1$.
Then the linear system $|-K_S-\ell|$ gives a conic pencil on $S$ if $\ell$ does not meet any $(-2)$-curves on $S$. And the linear system $|-K_S-\ell-E|$ gives a conic pencil on $S$ where $E$ is the sum of $(-2)$-curves whose configuration meets $\ell$.
\end{Prop}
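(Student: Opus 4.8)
The plan is to exhibit $-K_S-\ell$ (in case~(1)), or its correction $-K_S-\ell-E$ (in case~(2)), as a base-point-free pencil whose general member is a smooth conic. First note that since $\ell\cong\PP^1$ and $-K_S\cdot\ell=1$, adjunction gives $\ell^2=-1$, so $\ell$ is a $(-1)$-curve. Setting $C:=-K_S-\ell$ I compute $C\cdot(-K_S)=K_S^2-1=2$ and $C^2=K_S^2-2(-K_S\cdot\ell)+\ell^2=3-2-1=0$, so $C$ has exactly the numerical type of a conic. For a $(-2)$-curve $\Gamma$ one has $C\cdot\Gamma=-\ell\cdot\Gamma$, which is negative precisely when $\ell$ meets $\Gamma$; this is the origin of the correction term $E$ in~(2) and explains the hypothesis in~(1).

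In case~(1) I would first verify that $C$ is nef. By the cone theorem (as in the Remark) it suffices to test $C$ against $(-1)$- and $(-2)$-curves. Against a $(-2)$-curve $\Gamma$ we get $C\cdot\Gamma=-\ell\cdot\Gamma=0$ by hypothesis; against $\ell$ we get $C\cdot\ell=2$; and against any other $(-1)$-curve $m$ the only thing to check is $\ell\cdot m\le1$. This follows from the Hodge index inequality applied to $A=-K_S$ (with $A^2=3>0$): from $(A\cdot(\ell+m))^2\ge A^2(\ell+m)^2$ one gets $4\ge 3(-2+2\,\ell\cdot m)$, hence $\ell\cdot m\le 1$. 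Thus $C$ is nef. Now Riemann--Roch gives $\chi(C)=1+\tfrac12 C\cdot(C-K_S)=2$; Serre duality together with nefness of $-K_S$ forces $h^2(C)=h^0(2K_S+\ell)=0$, since $-K_S\cdot(2K_S+\ell)=-5<0$; and writing $C=K_S+(-K_S+C)$, where $-K_S+C$ is nef (a sum of nef classes) and big (its square is $7>0$), Kawamata--Viehweg vanishing yields $h^1(C)=0$. Hence $h^0(C)=2$ and $|C|$ is a pencil.

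To see that this pencil is base-point-free with conic fibers, I would rule out a fixed part. Write $C=M+F$ with $M$ moving and $F$ fixed. Since $C$ is nef with $C^2=0$, both $C\cdot M$ and $C\cdot F$ vanish; combined with $M^2\ge0$ and $M\cdot F\ge0$ this forces $M^2=M\cdot F=F^2=0$. If $F\neq0$, then $-K_S\cdot F\ge1$: otherwise $F$ would be supported on $(-2)$-curves, whose intersection form is negative definite, contradicting $F^2=0$. Then $-K_S\cdot M\le1$, and $-K_S\cdot M=0$ is excluded by Hodge index (it would force $M\equiv0$), so $-K_S\cdot M=1$; but a general member $M'$ of the fixed-part-free pencil $|M|$ would then satisfy $2p_a(M')-2=M'^2+K_S\cdot M'=-1$, which is impossible. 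Hence $F=0$, and a pencil with no fixed part and $C^2=0$ is base-point-free. The induced morphism $S\to\PP^1$ has, by Bertini, smooth general fiber of arithmetic genus $p_a=1+\tfrac12 C\cdot(C+K_S)=0$, i.e.\ a smooth conic; this is the desired conic pencil.

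For case~(2) the strategy is identical once the correct $E$ is isolated: I would subtract an effective combination $E$ of $(-2)$-curves, supported on the connected components of the $(-2)$-configuration met by $\ell$, chosen so that $C':=-K_S-\ell-E$ is nef, after which the checks $h^0=2$, base-point-freeness, and conic fibers run verbatim once $(C')^2=0$ is known. The square is pinned down by the Hodge index bound $\ell\cdot(\text{connected component})=1$, which gives $(C')^2=0$ for the right $E$. \textbf{The main obstacle is precisely the determination of $E$ together with the verification of nefness.} For $A$-type chains the reduced sum of the component works, but at a branch vertex of a $D$- or $E$-type diagram the reduced sum already yields $C'\cdot\Gamma<0$, so $E$ must carry higher multiplicities. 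The clean way to organize this, and the route I would take, is to contract the $(-2)$-curves to the du Val anticanonical model $\bar S$ (a del Pezzo surface of degree $3$), where $-K_{\bar S}-\bar\ell$ is a genuine Cartier conic pencil in the sense of case~(1), and to pull it back: $\sigma^*(-K_{\bar S}-\bar\ell)$ is an integral nef class of square $0$ on $S$ of the form $-K_S-\ell-E$, supplying both the correct multiplicities of $E$ and its nefness uniformly across the ADE types.
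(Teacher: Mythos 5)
Your case (1) is correct, and it proves more than the paper's own argument does. The paper obtains $h^0(S,\Oo_S(-K_S-\ell))=2$ from the restriction sequence $0\to\Oo_S(-K_S-\ell)\to\Oo_S(-K_S)\to\Oo_\ell(-K_S)\to0$ together with Kawamata--Viehweg vanishing for $-2K_S-\ell$, and then concludes simply by noting that a general member has anticanonical degree $2$; it never checks nefness, freeness, or smoothness of the general member. Your Riemann--Roch computation is equivalent to this (the same vanishing enters), and your additional steps --- nefness via the Hodge index bound $\ell\cdot m\le 1$, the exclusion of a fixed part using $C^2=0$ and negative definiteness of the $(-2)$-lattice, and the parity contradiction $M'^2+K_S\cdot M'=-1$ --- are all sound.

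Case (2) is where the genuine gap sits, and you flagged it yourself: you never actually produce $E$ or verify nefness, deferring both to a pullback from the anticanonical model $\bar S$, and that step is not justified as written. Since $\ell$ meets the $(-2)$-configuration, its image $\bar\ell$ passes through a du Val point, so $-K_{\bar S}-\bar\ell$ is Weil but not Cartier there; ``$\sigma^*$'' must then mean Mumford's numerical pullback, which a priori has fractional coefficients, and your case (1) was proved on a smooth surface, so it does not apply verbatim to $\bar S$. (The geometric substitute --- planes through $\bar\ell$ cut out $\bar\ell$ plus a residual conic, and one takes strict transforms --- does work, but it is not in your text.) On the other hand, your structural observation is correct and touches a real subtlety that the paper glosses over: the reduced sum does fail beyond end-nodes of $A$-chains. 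For instance, on the $D_4$ surface (blow up collinear $p_1,p_2,p_3$ and infinitely near $p_4,p_5,p_6$ off the line) with $\ell$ the exceptional $(-1)$-curve over $p_4$, the class $-K_S-\ell$ minus the reduced sum of the four $(-2)$-curves pairs to $-1$ with the central node, and the actual fixed part of $|-K_S-\ell|$ is the non-reduced divisor $2(e_1-e_4)+(e_2-e_5)+(e_3-e_6)+2(H-e_1-e_2-e_3)$, with moving part the conic pencil $|H-e_1|$. Be aware, though, that the paper's own proof of case (2) never claims nefness of $-K_S-\ell-E$: it reuses the $h^0=2$ computation, uses $(-K_S-\ell)\cdot E<0$ only to place $E$ in the base locus, and concludes from $F\cdot(-K_S)=2$ that the residual system induces the conic fibration --- a weaker (and, at branch vertices, only loosely justified) assertion than the base-point-free statement you are aiming at. So your proposal, once the strict-transform argument is carried out, would yield a corrected and stronger version of the proposition; as it stands, it is incomplete at exactly the step you identified as the main obstacle.
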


\begin{proof}
Suppose that $\ell$ does not meet any $(-2)$-curves.
Then we have clearly $(-K_S-\ell)^2=0$.
We also get $h^0(S, \Oo_S(-K_S-\ell))=2$ by the exact sequence
\[
0 \to \Oo_S(-K_S-\ell) \to \Oo_S(-K_S) \to \Oo_{\ell}(-K_S) \to 0
\]
and the Kawamata-Viehweg vanishing theorem.
We note that $h^0(\ell, \Oo_{\ell}(-K_S))=h^0(\PP^1, \Oo_{\PP^1}(1))=2$ and that $h^1(S, \Oo_S(-K_S-\ell))=h^1(S, \Oo_S(K_S+(-2K_S-\ell)))=0$ because $\Oo_S(-2K_S-\ell)$ is a nef and big divisor
Let $F$ be a general fiber of the linear system $|-K_S-\ell|$.
Then we have $F.(-K_S-\ell)=2$.
So, we prove the first statement.

Now, let $E$ be the sum of $(-2)$-curves whose configuration meets $\ell$.
Then by the same computation we have $(-K_S-\ell)^2=0$ and $h^0(S, \Oo_S(-K_S-\ell))=2$.
But $(-K_S-\ell).E< 0$, so $E$ is the fixed part of the linear system $|-K_S-\ell|$.
And we have $F.(-K_S)=2$ for a general fiber $F$ of the linear system $|-K_S-\ell-E|$.
Therefore, the linear system $|-K_S-\ell-E|$ gives a conic pencil on $S$.
\end{proof}

\begin{Prop}
Let $S$ be the blow-up of $\PP^2$ at a set of bubble points, including four closed points $p_1,\,\ldots,\,,p_4$, no three of which are collinear, and another closed point $p_5$ which does not lie at the intersection of any pair of lines connecting two points among $p_1,\,\ldots,\,p_4$.
Then $T_S$ is not big.
\end{Prop}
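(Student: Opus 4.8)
The plan is to exhibit two conic bundle structures on $S$ and feed them into Lemma~\ref{not_big_lemma}. The natural choice is dictated by the geometry of the five marked points: since no three of $p_1,\dots,p_4$ are collinear, the conics through $p_1,\dots,p_4$ form a base-point-free pencil, giving a conic fibration $f_2\colon S\to\PP^1$ with fiber class $F_2=2H-E_1-E_2-E_3-E_4$; and the lines through $p_5$ give a conic fibration $f_1\colon S\to\PP^1$ with fiber class $F_1=H-E_5$. Each of these is a sequence of blow-ups of a conic bundle (of $\mathbb{F}_1$ and of the degree-$5$ del Pezzo surface $\Bl_{p_1,\dots,p_4}\PP^2$, respectively), so by the discussion in Section~2 each carries a prime total dual VMRT $\breve\Cc_i=\zeta-\Pi^*T_{S/\PP^1}$. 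First I would use the hypotheses to check that these are honest distinct conic pencils: the collinearity condition rules out a fixed line in $|2H-E_1-E_2-E_3-E_4|$, and the condition that $p_5$ avoids the three diagonal points of the quadrilateral $\{p_1,\dots,p_4\}$ ensures that the member of the second pencil through $p_5$ is an irreducible conic rather than an already-reducible fiber.

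The core of the argument is a class computation. Writing $f_i^*K_{\PP^1}=-2F_i$ and $T_{S/\PP^1}=-K_S+f_i^*K_{\PP^1}+D_i$, with $D_i$ the effective non-reduced part of the fibers, the formula $\breve\Cc_i=\zeta-\Pi^*T_{S/\PP^1}$ becomes $\breve\Cc_i=\zeta+\Pi^*(K_S+2F_i-D_i)$, so that
\[
\breve\Cc_1+\breve\Cc_2=2\zeta+\Pi^*\bigl(2K_S+2F_1+2F_2-D_1-D_2\bigr).
\]
The key identity is $F_1+F_2=3H-E_1-E_2-E_3-E_4-E_5=-K_S+\sum_{i\ge 6}E_i$, where $E_6,E_7,\dots$ are the exceptional classes of the remaining bubble points. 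Substituting, the contribution $2K_S$ cancels against $-2K_S$ and we are left with
\[
\breve\Cc_1+\breve\Cc_2=2\zeta+\Pi^*M,\qquad M:=2\sum_{i\ge 6}E_i-D_1-D_2 .
\]
Everything therefore reduces to proving that $M$ is effective, after which Lemma~\ref{not_big_lemma} (with $k=2$) yields at once that $T_S$ is not big.

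To prove $M\ge 0$ I would control the discriminants $D_1,D_2$ through Corollary~\ref{sing}: a blow-up at a singular point of a singular fiber contributes the corresponding exceptional class to the discriminant with coefficient exactly $1$, while a blow-up at a smooth point of a fiber contributes nothing. Thus a point that is nodal for both pencils contributes $2E_i$, matching exactly the $2E_i$ available in $M$, and any point lying on the smooth locus of a fiber of $f_1$ or of $f_2$ contributes strictly less. The one thing to rule out is a deep tower of infinitely near points concentrated on a single fiber, or a tower shared by both pencils. The relevant geometric input is that a line (fiber of $f_1$) and a conic (fiber of $f_2$) meet with intersection multiplicity at most $2$, so such a tower can be shared by the two pencils only up to length $2$; combined with the bound on the number of bubble points, this forces the coefficient of every $E_i$ in $D_1+D_2$ to be at most $2$, giving $M\ge 0$.

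The main obstacle is precisely this last step: the uniform bookkeeping of the fiber discriminants $D_1,D_2$ for arbitrary, possibly infinitely near, bubble points, and in particular excluding the configurations in which a long infinitely near chain on one fiber would drive some coefficient of $M$ negative. Corollary~\ref{sing} is designed to localize this bookkeeping, so the remaining work is a finite analysis of how the extra points can be distributed over the singular fibers of the two pencils, organized according to the configuration of $(-2)$-curves on $S$.
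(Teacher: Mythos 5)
Your overall strategy is the paper's: the same two families (lines through $p_5$, conics through $p_1,\ldots,p_4$), their total dual VMRTs summing to $2\zeta$ plus a pullback, then Lemma~\ref{not_big_lemma}. But you miss the one idea that makes the proof short: by Lemma~\ref{domination} applied to the blow-down $S\to S':=\Bl_{p_1,\ldots,p_5}\PP^2$, bigness of $T_S$ would force bigness of $T_{S'}$, so it suffices to treat $S'$ itself. On $S'$ the two hypotheses do exactly the work you assign them to ($p_5$ off the diagonal points prevents a non-reduced member $\tilde\ell_{ij}+\tilde\ell_{kl}+2E_5$ of the conic pencil; no three of $p_1,\ldots,p_4$ collinear prevents double lines), so $D_1=D_2=0$ and $\breve\Cc_1+\breve\Cc_2=2\zeta$ on the nose. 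All of your discriminant bookkeeping over the extra bubble points evaporates.

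Because you instead work on $S$ directly, your proof has a genuine gap at precisely the step you flag: the effectivity of $M=2\sum_{i\ge 6}E_i-D_1-D_2$, and the two reasons you offer for it do not hold. First, there is no ``bound on the number of bubble points'' in the statement: arbitrarily many extra points, in arbitrarily deep towers, are allowed. Second, the B\'ezout bound (a line meets a conic with multiplicity at most $2$) does not limit shared towers, because the hypotheses permit $p_5$ to lie \emph{on} a line $\overline{p_ip_j}$ (only the three diagonal points are excluded); then the strict transform of $\overline{p_ip_j}$ is a common \emph{component} of a fiber of $f_1$ and of a degenerate fiber of $f_2$, and a tower of any length along it (start with $x=\tilde\ell\cap E_5$, then keep blowing up the intersection of the last exceptional curve with $\tilde\ell$) contributes to \emph{both} $D_1$ and $D_2$ at every step, so your ``shared only up to length $2$'' claim is false. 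The statement that does hold, and would repair your argument, is this: writing classes in the basis of total transforms $e_i$, the coefficient of $e_i$ in $D_j$ equals $r_j-1$, where $r_j$ is the multiplicity of the \emph{reduced} fiber of $f_j$ at the blown-up point; since the reduced fibers of both pencils on $S'$ are nodal (this again uses both hypotheses) and nodality of reduced fibers is preserved under blowing up, one gets $r_j\le 2$, hence each $e_i$ appears in $D_1+D_2$ with coefficient at most $(2-1)+(2-1)=2$ and $M$ is effective. So your route can be completed, but not by the justification you give — and Lemma~\ref{domination} makes the entire analysis unnecessary.
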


\begin{proof}
It suffices to show that $S=\Bl_{p_1,\ldots,p_5}\PP^2$ does not have big $T_S$ due to Lemma \ref{domination}. 
Note that
\[
K_S
=-3H+E_1+E_2+E_3+E_4+E_5.
\]
We define a set of total dual VMRTs on $\PP(T_S)$ as follows.
\begin{itemize}
\item
$\breve\Cc$ is the total dual VMRT associated to the family of strict transform $\ell$ of lines on $\PP^2$ passing through $p_5$.
\item
$\breve\Dd$ is the total dual VMRT associated to the family of strict transform $C$ of conics on $\PP^2$ passing through $p_1$, $p_2$, $p_3$, $p_4$.
\end{itemize}
Then $[\ell]=H-E_5$, $[C]=2H-E_1-E_2-E_3-E_4$, and no non-reduced curve exists in each linear class.
So we have
\begin{align*}
\breve\Cc
&=\zeta+\Pi^*(K_S+2[\ell])= \zeta-\Pi^*H+\Pi^*E_1+\Pi^*E_2+\Pi^*E_3+\Pi^*E_4-\Pi^*E_5,\\
\breve\Dd
&=\zeta+\Pi^*(K_S+2[C])=\zeta+\Pi^*H-\Pi^*E_1-\Pi^*E_2-\Pi^*E_3-\Pi^*E_4+\Pi^*E_5,
\end{align*}
which are summed up to
$
\breve\Cc+\breve\Dd= 2\zeta.$ 
Thus $T_S$ is not big by Lemma \ref{not_big_lemma}.
\end{proof}

\section{Weak del Pezzo surface of degree $4$}

Any weak del Pezzo surface $S$ of degree $4$ can be obtained by blowing up at five bubble points $p_1,\,\ldots,\,p_5$ in $\PP^2$ (cf. \cite[Section 8.6.3]{Dol12}). 

We have the following Figure \ref{Fig-deg4} showing the hierarchy of weak del Pezzo surfaces of degree $4$ with respect to their specializations of the configuration of $(-2)$-curves.
These specializations can be observed through the specializations of the Segre symbol of the quadric pencil induced from weak del Pezzo surfaces of degree $4$ \cite[Table 8.6 and Section 8.6.3]{Dol12}.
More generally, it can be seen by using the Borel-de Sibenthal-Dynkin algorithm \cite[Section 6.4]{DK} or \cite{Ura83}.
In Figure \ref{Fig-deg4}, $2A_1^{(9)}$ means that the configuration of $(-2)$-curves on $S$ is the resolution of two $A_1$-singularities and $S$ has $9$ lines.
Here, by a line, we mean $\mathbb P^1$ on $S$ with $(-K_S)$-degree $1$.
The other notations can be understood similarly.

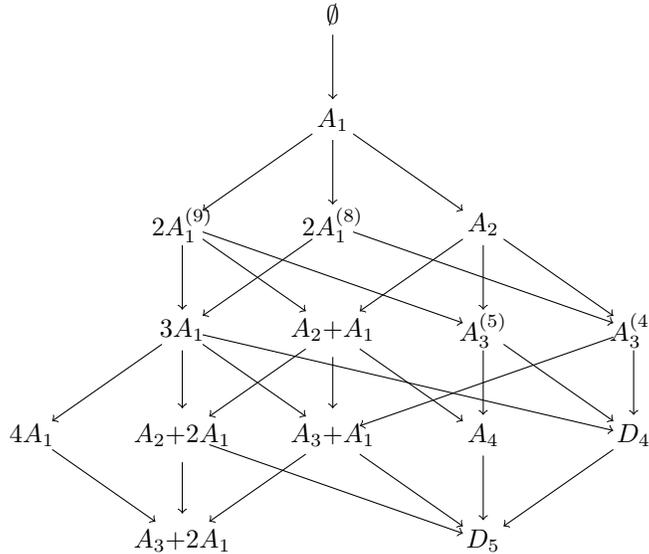
\begin{figure}[ht]
\begin{tikzpicture}[scale=1]

\def\h{1.4}
\def\w{2}
\def\s{15}
\def\l{20}

\node[minimum size=\s,label=center:$\emptyset$] (0) at (0,0) {};

\node[minimum size=\s,label=center:$A_1$] (A1) at (0,-\h*1) {};
\draw [->] (0) -- (A1);

\node[minimum size=\s,label=center:\hyperlink{4-2A_1(9)}{$2A_1^{(9)}$}] (2A1) at (-2*\w/2+\w*0,-\h*2) {};
\draw [->] (A1) -- (2A1);

\node[minimum size=\s,label=center:$2A_1^{(8)}$] (2A1g) at (-2*\w/2+\w*1,-\h*2) {};
\draw [->] (A1) -- (2A1g);

\node[minimum size=\s,label=center:$A_2$] (A2) at (-2*\w/2+\w*2,-\h*2) {};
\draw [->] (A1) -- (A2);

\node[minimum size=\s,label=center:$3A_1$] (3A1) at (-2*\w/2+\w*0,-\h*3) {};
\draw [->] (2A1) -- (3A1);
\draw [->] (2A1g) -- (3A1);

\node[minimum size=\l,label=center:$A_2$+$A_1$] (A1+A2) at (-2*\w/2+\w*1,-\h*3) {};
\draw [->] (2A1) -- (A1+A2);
\draw [->] (A2) -- (A1+A2);

\node[minimum size=\s,label=center:$A_3^{(5)}$] (A3) at (-2*\w/2+\w*2,-\h*3) {};
\draw [->] (2A1) -- (A3);
\draw [->] (A2) -- (A3);

\node[minimum size=\s,label=center:\hyperlink{4-A_3(4)}{$A_3^{(4)}$}] (A3g) at (-2*\w/2+\w*3,-\h*3) {};
\draw [->] (2A1g) -- (A3g);
\draw [->] (A2) -- (A3g);

\node[minimum size=\s,label=center:$4A_1$] (4A1) at (-4*\w/2+\w*0,-\h*4) {};
\draw [->] (3A1) -- (4A1);

\node[minimum size=\l,label=center:$A_2$+$2A_1$] (2A1+A2) at (-4*\w/2+\w*1,-\h*4) {};
\draw [->] (A1+A2) -- (2A1+A2);
\draw [->] (3A1) -- (2A1+A2);

\node[minimum size=\l,label=center:$A_3$+$A_1$] (A1+A3) at (-4*\w/2+\w*2,-\h*4) {};
\draw [->] (A1+A2) -- (A1+A3);
\draw [->] (3A1) -- (A1+A3);
\draw [->] (A3g) -- (A1+A3);

\node[minimum size=\s,label=center:$A_4$] (A4) at (-4*\w/2+\w*3,-\h*4) {};
\draw [->] (A3) -- (A4);
\draw [->] (A1+A2) -- (A4);

\node[minimum size=\s,label=center:$D_4$] (D4) at (-4*\w/2+\w*4,-\h*4) {};
\draw [->] (A3) -- (D4);
\draw [->] (A3g) -- (D4);
\draw [->] (3A1) -- (D4);

\node[minimum size=\l,label=center:$A_3$+$2A_1$] (2A1+A3) at (-4*\w/2+\w*1,-\h*5) {};
\draw [->] (4A1) -- (2A1+A3);
\draw [->] (2A1+A2) -- (2A1+A3);
\draw [->] (A1+A3) -- (2A1+A3);

\node[minimum size=\s,label=center:$D_5$] (D5) at (-4*\w/2+\w*3,-\h*5) {};
\draw [->] (A1+A3) -- (D5);
\draw [->] (A4) -- (D5);
\draw [->] (D4) -- (D5);
\draw [->] (2A1+A2) -- (D5);
\end{tikzpicture}
\vspace{-0.5em}
\caption{Hierarchy between the types of weak del Pezzo surfaces of degree $4$ with respect to specialization}
\label{Fig-deg4}
\end{figure}

\begin{Rmk}
Since the tangent bundle of a del Pezzo surface of degree $4$ is pseudo-effective, the tangent bundle of every weak del Pezzo surface of degree $4$ is also pseudo-effective by Lemma~\ref{specialization}.
\end{Rmk}

\begin{proof}[{\bf Proof of Theorem~\ref{degree 4}}]
From the hierarchy between the types of weak del Pezzo surfaces of degree~$4$ with respect to their specializations and by Lemma~\ref{specialization}, it is enough to show 
\begin{itemize}
\item $T_S$ is big for the case $2A_1^{(9)}$, and
\item $T_S$ is not big for the case $A_3^{(4)}$.
\end{itemize}
Their proofs follow from Lemmas~\ref{lem1_deg4} and \ref{lem2-deg4}.
\end{proof}

Theorem~\ref{degree 4} shows that the positivity property of the tangent bundle of weak del Pezzo surfaces is very delicate. 
If $S$ is a weak del Pezzo surface of degree $4$ of type $2A_1^{(8)}$ or $A_3^{(4)}$, then $T_S$ is not big even though they have infinite automorphism groups (cf. \cite{MS}). On the other hand, if $S$ is of type $2A_1^{(9)}$, then $T_S$ is big even though it has only a finite automorphism group.

\hypertarget{4-2A_1(9)}{
\begin{Lem}\label{lem1_deg4}
Let $S$ be a weak del Pezzo surface of degree $4$ of type $2A_1^{(9)}$.
Then $T_S$ is big.
\end{Lem}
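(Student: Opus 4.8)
The plan is to realise $S$ as an explicit blow-up of $\PP^2$, to sum the total dual VMRTs of \emph{all} of its conic bundle structures, and then to read off bigness from the position of $\zeta=\Oo_{\PP(T_S)}(1)$ in the pseudo-effective cone of $\PP(T_S)$. Concretely, I would take $S=\Bl_{p_1,\dots,p_5}\PP^2$ with $p_2\succ p_1$, $p_4\succ p_3$ and $p_5$ a general point, so that the two disjoint $(-2)$-curves are $G_1=E_1-E_2$ and $G_2=E_3-E_4$; one checks (e.g. by counting the $(-1)$-classes $C$ with $C\cdot G_1,\,C\cdot G_2\ge 0$) that this surface carries exactly nine lines, hence is of type $2A_1^{(9)}$.

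First I would list the conic bundle structures. Among the ten classes $F$ with $F^2=0$ and $-K_S\cdot F=2$, exactly six are nef, namely $H-E_1$, $H-E_3$, $H-E_5$, $2H-E_1-E_3-E_4-E_5$, $2H-E_1-E_2-E_3-E_5$ and $2H-E_1-E_2-E_3-E_4$; the remaining four differ from one of these by a fixed $(-2)$-curve and define the same fibration. Because $S$ is of type \emph{exactly} $2A_1^{(9)}$, whenever a $(-2)$-curve lies in a fibre that fibre is a reduced length-$3$ chain with the $(-2)$-curve as its middle component (otherwise the $(-2)$-curve is a section), so every fibre of each of the six conic bundles is reduced and the correction term $D$ vanishes. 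Hence each total dual VMRT is $\breve\Cc_F=\zeta+\Pi^*(K_S+2F)$, and a direct computation gives
\[
\sum_{F}\breve\Cc_F=6\zeta-2\Pi^*(G_1+G_2),\qquad\text{equivalently}\qquad \zeta=\tfrac16\sum_F\breve\Cc_F+\tfrac13\Pi^*(G_1+G_2).
\]

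The key step is then essentially linear algebra. The six divisors $\breve\Cc_F$ together with the two vertical divisors $\Pi^*G_1,\Pi^*G_2$ are all effective, and the displayed identity exhibits $\zeta$ as a \emph{strictly positive} combination of these eight classes. I would verify that these eight classes span $N^1(\PP(T_S))_{\mathbb R}$, which has rank $7$. Granting this, $\zeta$ lies in the interior of the cone they generate, and since that cone is contained in the pseudo-effective cone, $\zeta$ lies in the interior of the pseudo-effective cone of $\PP(T_S)$. Therefore $\zeta$ is big, i.e. $T_S$ is big.

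The main obstacle is precisely this last positivity. The VMRT identity by itself only shows $\zeta-\tfrac13\Pi^*(G_1+G_2)$ is pseudo-effective; and since all the classes $K_S+2F$ are $(-K_S)$-orthogonal, \emph{no} positive combination of conic bundle VMRTs can produce $\zeta-\Pi^*A$ with $A$ big, so Lemma~\ref{big_lemma} cannot be applied with a divisor coming from the conic bundles alone. What upgrades pseudo-effectivity to bigness is that, after adjoining the two vertical divisors $\Pi^*G_i$, the eight effective classes become full-dimensional, so $\zeta$ is an interior point rather than a boundary point. This is exactly where type $2A_1^{(9)}$ parts company with $2A_1^{(8)}$: for the latter the analogous sum reads $7\zeta-\Pi^*(G_1+G_2)$, and one instead produces a supporting functional (a movable curve meeting $\zeta$ trivially) witnessing that $\zeta$ is on the boundary. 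I would therefore devote the most care to the rank computation and to confirming, from the exact configuration, that none of the six conic bundles acquires a non-reduced fibre, which would alter the VMRT classes and the identity above.
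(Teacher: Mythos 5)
Your strategy is the same as the paper's in all essentials: choose an explicit blow-up model, sum total dual VMRTs of conic fibrations together with pulled-back curves, and conclude from a strictly positive, full-rank effective decomposition of a multiple of $\zeta$ that $\zeta$ is interior to the pseudo-effective cone (the paper does this in the model of five distinct points with $\{p_1,p_3,p_5\}$ and $\{p_2,p_4,p_5\}$ collinear, using the five pencils of lines through the $p_i$, one conic pencil, and the two $(-2)$-curves). However, your argument contains a genuine error at exactly the step you flagged as needing the most care: it is \emph{not} true that all six of your conic bundles have reduced fibres. In your model (with $E_i$ the total transforms, $G_1=E_1-E_2$, $G_2=E_3-E_4$), the pencil with fibre class $Q_5=2H-E_1-E_2-E_3-E_4$ is the pencil of conics tangent to $\ell_{12}$ at $p_1$ and to $\ell_{34}$ at $p_3$; classically such a bitangent pencil is spanned by the line pair $\ell_{12}+\ell_{34}$ and the \emph{double line} $2\ell_{13}$, where $\ell_{13}$ is the chord through the two points of tangency. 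The corresponding fibre on $S$ is $G_1+2\tilde\ell_{13}+G_2$, where $\tilde\ell_{13}$ is the $(-1)$-curve of class $H-E_1-E_3$, occurring with multiplicity two. Your dichotomy (``a $(-2)$-curve in a fibre is the middle of a reduced length-$3$ chain'') fails here because both $(-2)$-curves lie in the \emph{same} fibre, as reduced end components flanking a doubled $(-1)$-curve; indeed $Q_5\cdot G_1=Q_5\cdot G_2=0$, and one checks there is no reduced chain through $G_1$ of class $Q_5$. This is precisely the phenomenon the paper's proof handles via Corollary~\ref{sing}, where the conic VMRT $\breve\Dd$ acquires the correction $-\Pi^*E_5$ in the paper's coordinates.

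The error is repairable without changing your architecture. By the Serrano $D$-term, $\breve\Cc_{Q_5}=\zeta+\Pi^*(K_S+2Q_5)-\Pi^*(H-E_1-E_3)$, so your displayed identity must be replaced by
\[
6\zeta=\sum_{F}\breve\Cc_F+2\Pi^*(G_1+G_2)+\Pi^*\bigl(H-E_1-E_3\bigr),
\]
which still exhibits $6\zeta$ as a strictly positive combination of effective classes, and these nine classes do span $N^1(\PP(T_S))_{\mathbb R}$ of rank $7$ (for instance, $\breve\Cc_{H-E_1}-\breve\Cc_{H-E_3}$, $\breve\Cc_{H-E_1}-\breve\Cc_{H-E_5}$, $\breve\Cc_{Q_2}-\breve\Cc_{Q_4}$, $\breve\Cc_{H-E_1}-\breve\Cc_{Q_2}$ together with $\Pi^*G_1$, $\Pi^*G_2$, $\Pi^*(H-E_1-E_3)$ generate $\Pi^*\NS(S)$, and $\breve\Cc_{Q_5}$ then supplies $\zeta$). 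So your conclusion survives, and in fact more comfortably than your uncorrected identity suggested; but as written the proof asserts a false reducedness claim, and the heuristic offered in its support is wrong. Two minor points: your identification of the model with type $2A_1^{(9)}$ via the line count is correct and should be kept (every surface of this type admits a blow-down structure of your shape), and your side remark that the classes $K_S+2F$ are $(-K_S)$-orthogonal, so that conic VMRTs alone cannot feed Lemma~\ref{big_lemma}, is accurate and matches why the paper also passes through the interior-of-cone criterion rather than that lemma.
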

}

\begin{proof}
The surface $S$ can be obtained by blowing up $5$ closed points $p_1,\,\ldots,p_5$ in $\mathbb P^2$
such that $\{p_1,p_3,p_5\}$, $\{p_2,p_4,p_5\}$ are collinear.

\vspace{-0.5em} 
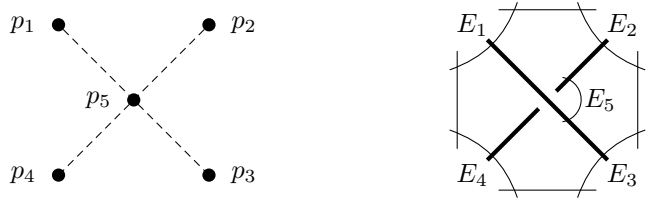
\begin{figure}[ht]
\begin{tikzpicture}[scale=1]
\begin{scope}[xshift=-2.5cm]
\node[circle,fill=black,inner sep=0pt,minimum size=5pt] at (0,0) {};
\node[left=5pt of {(0,0)}] {$p_5$};
\node[circle,fill=black,inner sep=0pt,minimum size=5pt] at (1,1) {};
\node[right=5pt of {(1,1)}] {$p_2$};
\node[circle,fill=black,inner sep=0pt,minimum size=5pt] at (-1,1) {};
\node[left=5pt of {(-1,1)}] {$p_1$};
\node[circle,fill=black,inner sep=0pt,minimum size=5pt] at (-1,-1) {};
\node[left=5pt of {(-1,-1)}] {$p_4$};
\node[circle,fill=black,inner sep=0pt,minimum size=5pt] at (1,-1) {};
\node[right=5pt of {(1,-1)}] {$p_3$};
\draw[densely dashed,color=black] (-1,-1)--(1,1);
\draw[densely dashed,color=black] (1,-1)--(-1,1);
\end{scope}

\begin{scope}[xshift=2.5cm,yshift=-1.2cm]
\draw (-0.15,0)--(1.15,0);
\draw (-0.15,2.4)--(1.15,2.4);

\draw (0.1,-0.1) to[out=110,in=-20] (-0.7-0.1,0.7+0.1);
\draw (-0.7,0.7-0.15)--(-0.7,1.7+0.15);
\draw (-0.7-0.1,1.7-0.1) to[out=20,in=-110] (0+0.1,2.4+0.1);

\draw (1-0.1,-0.1) to[out=70,in=200] (1+0.7+0.1,0.7+0.1);
\draw (1+0.7,0.7-0.15)--(1+0.7,1.7+0.15);
\draw (1+0.7+0.1,1.7-0.1) to[out=160,in=-70] (1-0.1,2.4+0.1);

\draw[ultra thick] (-0.3,0.4)--(1.3,2.0);
\node[circle,fill=white] at (0.5,1.2) {};
\draw[ultra thick] (1.3,0.4)--(-0.3,2.0);
\draw (0.7,1.5) arc[start angle=80, end angle=-80, radius=0.3];

\node at (-0.5,2.2) {$E_1$};
\node at (1.5,2.2) {$E_2$};
\node at (1.5,0.2) {$E_3$};
\node at (-0.5,0.2) {$E_4$};
\node at (1.2,1.2) {$E_5$};
\end{scope}
\end{tikzpicture}
\vspace{-1em} 
\caption{Configuration of $9$ lines and $(-2)$-curves of type $2A_1^{(9)}$ (degree $4$)}
\vspace{-0.5em} 
\end{figure}

\noindent
Then $K_S=-3H+E_1+E_2+E_3+E_4+E_5$.
We define a set of effective divisors on $\PP(T_S)$ as follows.

\begin{itemize}
\item
$\breve\Cc_i$ is the total dual VMRT associated to the family of strict transforms of lines $\ell_i$ on $\PP^2$ passing through $p_i$.
\item 
$\breve\Dd$ is the total dual VMRT associated to the family of strict transforms of conics $C$ on $\PP^2$ passing through $p_1,\,\ldots,\,p_4$.
\item
$\Ee_{13}$ is the pull-back of the strict transform of the line passing through $p_1,\,p_3,\,p_5$.
\item 
$\Ee_{24}$ is the pull-back of the strict transform of the line passing through $p_2,\,p_4,\,p_5$.
\end{itemize}
Then
$[\ell_i]=H-E_i$ with no non-reduced curve,
and $[C]=2H-E_1-E_2-E_3-E_4$ with the non-reduced curve $(H-E_1-E_3-E_5)+(H-E_2-E_4-E_5)+2E_5$.
So we have
\begin{align*}
\breve\Cc_i
&= \zeta+\Pi^*(K_S+2[\ell_1])
= \zeta-\Pi^*H+\Pi^*E_1+\cdots-\Pi^*E_i+\cdots+\Pi^*E_5,\\
\breve\Dd 
&=  \zeta+\Pi^*(K_S+2[C])-\Pi^*E_5
=\zeta+\Pi^*H-\Pi^*E_1-\Pi^*E_2-\Pi^*E_3-\Pi^*E_4,\\
\Ee_{13}
&= \Pi^*H-\Pi^*E_1-\Pi^*E_3-\Pi^*E_5,\\
\Ee_{24}
&= \Pi^*H-\Pi^*E_2-\Pi^*E_4-\Pi^*E_5.
\end{align*}
Here, we use Corollary~\ref{sing} for the equation of $\breve\Dd$.
From these relations, we get the equality
\begin{equation}\label{eq.2A19}
\sum_{i=1}^5\breve\Cc_i
+\breve\Dd+2\Ee_{13}+2\Ee_{24}+\Pi^*E_5
=6\zeta.
\end{equation}
The subcone of $\overline{\text{NE}}(T_S)$ with the rays generated by $\breve\Cc_1$, $\ldots$, $\breve\Cc_5$, $\breve\Dd$, $\Ee_{13}$, $\Ee_{24}$ has full rank $7$.
From this and the equality (\ref{eq.2A19}), we can see that $\zeta$ lies in the interior of $\overline{\text{NE}}(T_S)$.
Thus $T_S$ is big.
\end{proof}

\hypertarget{4-A_3(4)}{
\begin{Lem}\label{lem2-deg4}
Let $S$ be a weak del Pezzo surface of degree $4$ of type $A_3^{(4)}$. Then $T_S$ is not big.
\end{Lem}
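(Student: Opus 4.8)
The plan is to exhibit two conic pencils on $S$ whose total dual VMRTs add up to exactly $2\zeta$, and then to apply Lemma~\ref{not_big_lemma}. First I would pin down the $A_3^{(4)}$ configuration: such an $S$ is obtained by blowing up bubble points $p_1,\ldots,p_5$ with $p_1,p_2,p_3$ collinear on a line $L$ and with a tower $p_4\succ p_3$, $p_5\succ p_4$ of infinitely near points, arranged so that the tangent direction $p_4$ at $p_3$ is not that of $L$. The three $(-2)$-curves are then $R_3=H-E_1-E_2-E_3$ (the strict transform of $L$), $R_2=E_3-E_4$ and $R_1=E_4-E_5$, forming an $A_3$ chain with $R_2$ central, and a direct check of the $(-1)$- and $(-2)$-classes shows the four lines are $E_1$, $E_2$, $E_5$, $H-E_3-E_4$. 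This realization, with $4$ lines, is the $A_3^{(4)}$ type of \cite{Dol12}, as opposed to the tower $E_1-E_2,\,E_2-E_3,\,E_3-E_4$, which has $5$ lines.

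Among the ten conic classes $H-E_i$ and $2H-\sum_{j\ne i}E_j$ on a degree $4$ surface, I would select the complementary pair $F=H-E_1$ and $F'=2H-E_2-E_3-E_4-E_5$, which satisfies $F+F'=-K_S$. Both are nef: one checks $F\cdot R_k\ge 0$ and $F'\cdot R_k\ge 0$ for $k=1,2,3$ (in fact all of these intersections vanish), and together with $F^2=(F')^2=0$ and $-K_S\cdot F=-K_S\cdot F'=2$ this makes each a base-point-free conic pencil defining a fibration $S\to\PP^1$. The one numerical fact that drives the proof is $(K_S+2F)+(K_S+2F')=2K_S+2(F+F')=0$.

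By the construction of the prime divisor $Y=\zeta-\Pi^*T_{S/B}$ in Section~2 (Serrano's formula), the total dual VMRT of a conic pencil has the form $\breve\Cc=\zeta+\Pi^*(K_S+2F)-\Pi^*D$, where $D\ge 0$ collects the non-reduced fibers. The crux is to prove $D=0$ for both $F$ and $F'$, and this is where the $A_3^{(4)}$ geometry enters. For $F=H-E_1$ this is immediate: its members are strict transforms of lines through $p_1$, hence reduced, the only reducible fiber being the reduced chain $E_2+R_3+R_2+R_1+E_5$ over $L$. For $F'$ the members are conics through $p_2$ and the tower, and its only reducible fiber is the reduced chain $E_1+R_3+R_2+R_1+M$ with $M=H-E_3-E_4$, coming from the split conic $L+M$; although $p_3$ is the node of $L+M$ and is blown up, the base-point multiplicities of $|F'|$ exactly absorb it, so no component becomes multiple. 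The one thing left to rule out is a non-reduced member: the only candidate is the double line $2L$, which however does not pass through $p_4$ (since $L$ is not in the direction $p_4$ at $p_3$) and hence does not occur, while $p_5$ avoids the node $R_2\cap E_4$ precisely because $R_1\cdot R_2=1$. Thus $D=0$ for both pencils (equivalently, no node is blown up, in the sense of Corollary~\ref{sing}), and an Euler-characteristic count using $e(S)=8$ confirms each pencil has only the one singular fiber. Establishing this reducedness is the main obstacle, and it is exactly where $A_3^{(4)}$ parts ways with the big cases such as $2A_1^{(9)}$, in which the corresponding conic pencil carries a genuine double component.

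Granting $D=0$ for both pencils, I would conclude
\[
\breve\Cc_F+\breve\Cc_{F'}=2\zeta+\Pi^*\big((K_S+2F)+(K_S+2F')\big)=2\zeta,
\]
so that the two total dual VMRTs sum to $k\zeta+\Pi^*D$ with $k=2$ and $D=0$. Lemma~\ref{not_big_lemma} then yields that $T_S$ is not big.
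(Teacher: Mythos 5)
Your proposal is correct and takes essentially the same approach as the paper: the paper's proof likewise chooses two complementary conic pencils whose classes sum to $-K_S$ (in its marking, lines through $p_5$ with class $H-E_5$ and conics with class $2H-E_1-2E_2-3E_3-E_4$), verifies that no non-reduced member occurs in either class, obtains $\breve\Cc+\breve\Dd=2\zeta$, and applies Lemma~\ref{not_big_lemma}. The only differences are your choice of an equivalent bubble-point realization of the $A_3^{(4)}$ configuration (tower over $p_3$ with $p_1,p_2,p_3$ collinear, versus the paper's tower $p_3\succ p_2\succ p_1$ with $\{p_1,p_4,p_5\}$ collinear) together with the corresponding complementary pair $H-E_1$ and $2H-E_2-E_3-E_4-E_5$, and some additional verification (nefness checks, the Euler-characteristic count of singular fibers) that the paper leaves implicit.
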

}

\begin{proof}
The surface $S$ can be obtained by blowing up 5 bubble points $p_1,\,\ldots,p_5$ in $\mathbb P^2$ such that
$p_3\succ p_2\succ p_1$ and $\{p_1,p_4,p_5\}$ is collinear.

\vspace{-0.5em} 
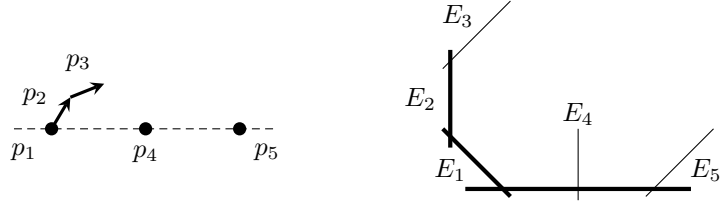
\begin{figure}[ht]
\begin{tikzpicture}[scale=1]
\begin{scope}[xshift=-3cm]
\draw[densely dashed] (-0.5,0)--(3,0);

\node[circle,fill=black,inner sep=0pt,minimum size=5pt] at (0,0) {};
\node[below left=3pt of {(0,0)}] {$p_1$};

\draw[very thick,color=black,-stealth] (0,0)--(0.25,0.42);
\node[left=5pt of {(0.25,0.42)}] {$p_2$};

\draw[very thick,color=black,-stealth] (0.25,0.42)--(0.7,0.6);
\node[above left=2pt of {(0.7,0.6)}] {$p_3$};

\node[circle,fill=black,inner sep=0pt,minimum size=5pt] at (2.5,0) {};
\node[below right=3pt of {(2.5,0)}] {$p_5$};

\node[circle,fill=black,inner sep=0pt,minimum size=5pt] at (1.25,0) {};
\node[below=3pt of {(1.25,0)}] {$p_4$};
\end{scope}

\begin{scope}[xshift=3cm,yshift=-0.8cm]
\draw[ultra thick] (-0.5,0)--(2.5,0);

\draw[ultra thick] (0.1,-0.1)--(-0.7-0.1,0.7+0.1);
\draw[ultra thick] (-0.7,0.7-0.15)--(-0.7,1.7+0.15);
\draw (-0.7-0.1,1.7-0.1)--(0+0.1,2.4+0.1);
\draw (2-0.1,-0.1)--(2+0.7+0.1,0.7+0.1);
\draw (1,-0.15)--(1,0.8);

\node at (-0.7,0.25) {$E_1$};
\node at (-1.1,1.2) {$E_2$};
\node at (-0.6,2.3) {$E_3$};
\node at (1,1) {$E_4$};
\node at (2.7,0.25) {$E_5$};
\end{scope}
\end{tikzpicture}
\vspace{-1em} 
\caption{Configuration of $4$ lines and $(-2)$-curves of type $A_3^{(4)}$ (degree $4$)}
\vspace{-0.5em} 
\end{figure}

\noindent
Then $K_S=-3H+E_1+2E_2+3E_3+E_4+E_5$. 
We define a set of total dual VMRTs on $\PP(T_S)$ as follows.

\begin{itemize}
\item
$\breve\Cc$ is the total dual VMRT associated to the family of strict transforms $\ell$ of lines $\ell$ on $\PP^2$ passing through $p_5$.
\item
$\breve\Dd$ is the total dual VMRT associated to the family of strict transforms $C$ of conics on $\PP^2$ passing through $p_1,\,\ldots,\,p_4$.
\end{itemize}
Then $[\ell]=H-E_5$, $[C]=2H-E_1-2E_2-3E_3-E_4$, and no non-reduced curve exists in each linear class.
So we have 
\begin{align*}
\breve\Cc
&= \zeta+\Pi^*(K_S+2[\ell])=\zeta-\Pi^*H+\Pi^*E_1+2\Pi^*E_2+3\Pi^*E_3+\Pi^*E_4-\Pi^*E_5,\\
\breve\Dd
&= \zeta+\Pi^*(K_S+2[C])=\zeta+\Pi^*H-\Pi^*E_1-2\Pi^*E_2-3\Pi^*E_3-\Pi^*E_4+\Pi^*E_5,
\end{align*}
which are summed up to $\breve\Cc+\breve\Dd=2\zeta$.
Thus $T_S$ is not big by Lemma~\ref{not_big_lemma}.
\end{proof}

\section{Weak del Pezzo surface of degree $\le 3$}

Figure \ref{fig-deg3} shows the hierarchy of weak del Pezzo surfaces of degree $3$ with respect to their specializations of the configuration of $(-2)$-curves.
We can get these specializations through the specializations of the blow-up of six bubble points in $\PP^2$ and the equations of singular cubic surfaces made from contraction of $(-2)$-curves \cite[Section 9.2.2]{Dol12}.
More generally, it can be seen by using the Borel-de Sibenthal-Dynkin algorithm \cite[Section 6.4]{DK} or \cite{Ura83}.

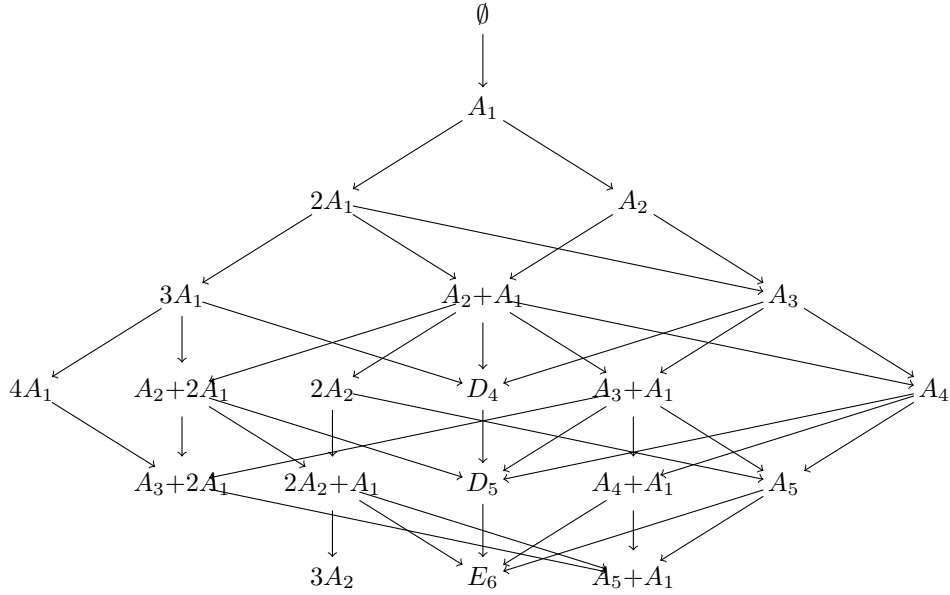
\begin{figure}[ht]
\begin{tikzpicture}[scale=1]
\def\h{1.25}
\def\w{2}
\def\s{15}
\def\l{20}

\node[minimum size=\s,label=center:$\emptyset$] (I) at (0,0) {};

\node[minimum size=\s,label=center:$A_1$] (II) at (0,-\h*1) {};
\draw [->] (I) -- (II);

\node[minimum size=\s,label=center:$A_2$] (III) at (-2*\w/2+\w*2,-\h*2) {};
\draw [->] (II) -- (III);

\node[minimum size=\s,label=center:$2A_1$] (IV) at (-2*\w/2+2*\w*0,-\h*2) {};
\draw [->] (II) -- (IV);

\node[minimum size=\s,label=center:$A_3$] (V) at (-6*\w/2+\w*5,-\h*3) {};
\draw [->] (III) -- (V);
\draw [->] (IV) -- (V);

\node[minimum size=\l,label=center:$A_2$+$A_1$] (VI) at (-6*\w/2+\w*3,-\h*3) {};
\draw [->] (IV) -- (VI);
\draw [->] (III) -- (VI);

\node[minimum size=\s,label=center:$3A_1$] (VIII) at (-6*\w/2+\w*1,-\h*3) {};
\draw [->] (IV) -- (VIII);

\node[minimum size=\s,label=center:\hyperlink{A_4}{$A_4$}] (VII) at (-8*\w/2+\w*7,-\h*4) {};
\draw [->] (V) -- (VII);
\draw [->] (VI) -- (VII);

\node[minimum size=\s,label=center:\hyperlink{4A_1}{$4A_1$}] (XVI) at (-8*\w/2+\w*1,-\h*4) {};
\draw [->] (VIII) -- (XVI);

\node[minimum size=\l,label=center:\hyperlink{A_3+A_1}{$A_3$+$A_1$}] (X) at (-8*\w/2+\w*5,-\h*4) {};
\draw [->] (V) -- (X);
\draw [->] (VI) -- (X);

\node[minimum size=\s,label=center:\hyperlink{2A_2}{$2A_2$}] (IX) at (-8*\w/2+\w*3,-\h*4) {};
\draw [->] (VI) -- (IX);

\node[minimum size=\l,label=center:\hyperlink{A_2+2A_1}{$A_2$+$2A_1$}] (XIII) at (-8*\w/2+\w*2,-\h*4) {};
\draw [->] (VI) -- (XIII);
\draw [->] (VIII) -- (XIII);

\node[minimum size=\s,label=center:\hyperlink{D_4}{$D_4$}] (XII) at (-8*\w/2+\w*4,-\h*4) {};
\draw [->] (VIII) -- (XII);
\draw [->] (VI) -- (XII);
\draw [->] (V) -- (XII);

\node[minimum size=\s,label=center:$A_5$] (XI) at (-6*\w/2+\w*5,-\h*5) {};
\draw [->] (VII) -- (XI);
\draw [->] (X) -- (XI);
\draw [->] (IX) -- (XI);

\node[minimum size=\l,label=center:$A_4$+$A_1$] (XIV) at (-6*\w/2+\w*4,-\h*5) {};
\draw [->] (VII) -- (XIV);
\draw [->] (X) -- (XIV);

\node[minimum size=\s,label=center:$D_5$] (XV) at (-6*\w/2+\w*3,-\h*5) {};
\draw [->] (VII) -- (XV);
\draw [->] (XII) -- (XV);
\draw [->] (XIII) -- (XV);
\draw [->] (X) -- (XV);

\node[minimum size=\l,label=center:\hyperlink{A_3+2A_1}{$A_3$+$2A_1$}] (XVIII) at (-6*\w/2+\w*1,-\h*5) {};
\draw [->] (XVI) -- (XVIII);
\draw [->] (X) -- (XVIII);
\draw [->] (XIII) -- (XVIII);

\node[minimum size=\l,label=center:$2A_2$+$A_1$] (XVII) at (-6*\w/2+\w*2,-\h*5) {};
\draw [->] (IX) -- (XVII);
\draw [->] (XIII) -- (XVII);

\node[minimum size=\l,label=center:$A_5$+$A_1$] (XIX) at (-6*\w/2+\w*4,-\h*6) {};
\draw [->] (XI) -- (XIX);
\draw [->] (XVIII) -- (XIX);
\draw [->] (XVII) -- (XIX);
\draw [->] (XIV) -- (XIX);

\node[minimum size=\s,label=center:\hyperlink{E_6}{$E_6$}] (XX) at (-6*\w/2+\w*3,-\h*6) {};
\draw [->] (XI) -- (XX);
\draw [->] (XVII) -- (XX);
\draw [->] (XV) -- (XX);
\draw [->] (XIV) -- (XX);

\node[minimum size=\s,label=center:$3A_2$] (XXI) at (-6*\w/2+\w*2,-\h*6) {};
\draw [->] (XVII) -- (XXI);
\end{tikzpicture}
\vspace{-0.5em}
\caption{Hierarchy between the types of weak del Pezzo surfaces of degree $3$ with respect to specialization}
\label{fig-deg3}
\vspace{-0.5em}
\end{figure}

In this section, we will prove some lemmas and propositions used to prove Theorem~\ref{degree 3}.  
We also treat some weak del Pezzo surfaces of degree $d\le 3$ with the maximum number of $(-2)$-curves.

\begin{proof}[{\bf Proof of Theorem~\ref{degree 3}}]
(1) Since a weak del Pezzo surface $S$ of degree $d\le 2$ with the $7-d$ number of $(-2)$-curves is obtained by the blow-up of a weak del Pezzo surface $S$ of degree $3$ whose number of $(-2)$-curves is less than or equal to $4$, it is enough to prove that $T_S$ is not big for all weak del Pezzo surfaces $S$ of degree $3$ with the four $(-2)$-curves by Lemma~\ref{domination}.
This will be proven by Lemmas~\ref{2A_2} through \ref{4A_1}.

(2) Bigness of $T_S$ for a weak del Pezzo surface $S$ of degree $3$ of type $A_3+2A_1$ will be proven by Proposition~\ref{A_3+2A_1}.
Then the Borel-de Sibenthal-Dynkin algorithm implies that $T_S$ is big for $S$ of degree $3$ of type $A_5+A_1$.
Bigness of $T_S$ for a weak del Pezzo surface of degree $3$ of type $E_6$ will be proven by Proposition~\ref{E_6}.
If $S$ is a weak del Pezzo surface of degree $3$ of type $3A_2$, then $S$ is toric (cf. \cite[Remark 6]{Der14}, see also Figure \ref{fig:3A_2} in Example \ref{succesive_blow-up}).
Since the tangent bundle of a smooth projective toric variety is big \cite{Hsiao15}, $T_S$ is big.
\end{proof}

\hypertarget{2A_2}{
\begin{Lem}\label{2A_2}
Let $S$ be a weak del Pezzo surface of degree $3$ of type $2A_2$. Then $T_S$ is not big.
\end{Lem}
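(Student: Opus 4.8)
The plan is to produce two conic pencils on $S$ whose associated total dual VMRTs sum to a divisor of the form $k\zeta+\Pi^{*}D$ with $D$ effective, and then to conclude by Lemma~\ref{not_big_lemma}. First I would fix a model: realize $S$ as $\Bl_{p_1,\dots,p_6}\PP^2$ where $p_3\succ p_2\succ p_1$ and $p_6\succ p_5\succ p_4$ are two generic infinitely near triples. For generic data the strict transforms $E_1,E_2$ and $E_4,E_5$ are the only $(-2)$-curves and form two disjoint $A_2$-chains (while $E_3,E_6$ are $(-1)$-curves), so $S$ is a weak del Pezzo surface of degree $3$ of type $2A_2$, with $K_S=-3H+E_1+2E_2+3E_3+E_4+2E_5+3E_6$. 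The same genericity guarantees that the line $\overline{p_1p_4}$ avoids all infinitely near points, so its strict transform $H-E_1-\cdots-E_6$ is an effective $(-1)$-curve; this is the class that will appear in $D$.

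Next I would introduce the two pencils interchanged by the symmetry of the two chains. Let $\breve\Cc$ be the total dual VMRT of the pencil of conics through $p_1,p_2,p_3,p_4$, of class $C=2H-E_1-2E_2-3E_3-E_4-E_5-E_6$, and let $\breve\Dd$ be the total dual VMRT of the pencil of conics through $p_1,p_4,p_5,p_6$, of class $C'=2H-E_1-E_2-E_3-E_4-2E_5-3E_6$. A direct check gives $C^{2}=C'^{2}=0$, $-K_S\cdot C=-K_S\cdot C'=2$, and that both classes are nef, so each defines a genuine conic pencil. The crucial point is that both pencils have only reduced fibers. For $\breve\Cc$ I would exhibit the two reducible members explicitly: one is a chain of four rational curves with self-intersections $(-1,-2,-2,-1)$ containing the whole $A_2$-pair $E_1,E_2$, and the other a chain $(-1,-2,-1)$ containing $E_5$. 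An Euler characteristic count, $\chi_{\rm top}(S)=4+3+2=9$, shows that these are the only singular fibers and that both are reduced; the same holds for $\breve\Dd$ by symmetry. Consequently the non-reduced part $D$ in Serrano's decomposition of Section~2 vanishes, and the relative tangent bundle computation gives, with no correction term,
\[
\breve\Cc=\zeta+\Pi^{*}(K_S+2C),\qquad \breve\Dd=\zeta+\Pi^{*}(K_S+2C').
\]

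Finally, adding the two identities and using $2K_S+2C+2C'=2(H-E_1-E_2-E_3-E_4-E_5-E_6)$, I obtain
\[
\breve\Cc+\breve\Dd=2\zeta+2\Pi^{*}(H-E_1-E_2-E_3-E_4-E_5-E_6),
\]
which is of the form $k\zeta+\Pi^{*}D$ with $k=2$ and $D=2(H-E_1-\cdots-E_6)$ effective. Lemma~\ref{not_big_lemma} then shows that $T_S$ is not big. The main obstacle I expect is precisely the reducedness claim: I must ensure that no correction term $-\Pi^{*}E_i$ (as in Corollary~\ref{sing}, or as in the $2A_1^{(9)}$ computation of Lemma~\ref{lem1_deg4}) enters the two total dual VMRTs, since such a term need not be absorbable into the effective part $D$. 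This is exactly the vanishing of the non-reduced part of each pencil, which I would settle by the explicit description of the reducible fibers together with the Euler characteristic bookkeeping above. (Note also that a naive attempt to split $-K_S$ into two conic classes fails by a parity obstruction, $(-K_S)^2=3$ being odd, which is why the residual line $\overline{p_1p_4}$ must appear in $D$ rather than the sum reducing to $2\zeta$.)
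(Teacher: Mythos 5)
Your proposal is correct and follows essentially the same route as the paper's proof: the same model with two infinitely near triples, the same two conic classes $2H-E_1-2E_2-3E_3-E_4-E_5-E_6$ and $2H-E_1-E_2-E_3-E_4-2E_5-3E_6$, the same identity $\breve\Cc+\breve\Dd=2\zeta+2\Pi^*(H-E_1-\cdots-E_6)$, and the conclusion via Lemma~\ref{not_big_lemma}. The only difference is that you explicitly verify the reducedness of all fibers (via the $(-1,-2,-2,-1)$ and $(-1,-2,-1)$ chains and the Euler characteristic count $\chi_{\rm top}(S)=4+3+2=9$), a point the paper asserts without proof, and your fiber analysis checks out.
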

}

\begin{proof}
The surface $S$ can be obtained by blowing up $6$ bubble points $p_1,\ldots,p_6$ in $\mathbb P^2$ such that
$p_3\succ p_2\succ p_1$ and $p_6\succ p_5\succ p_4$.

\vspace{-0.5em} 
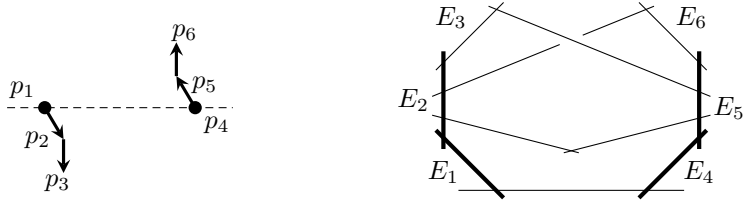
\begin{figure}[ht]
\begin{tikzpicture}[scale=1]
\begin{scope}[xshift=-3cm]
\node[circle,fill=black,inner sep=0pt,minimum size=5pt] at (0,0) {};
\node[above left=0pt of {(0,0)}] {$p_1$};

\draw[densely dashed] (-0.5,0)--(2.5,0);
\draw[very thick,color=black,-stealth] (0,0)--(0.25,-0.42);
\node[below left=0pt of {(0.2,-0.2)}] {$p_2$};

\draw[very thick,color=black,-stealth] (0.25,-0.42)--(0.25,-0.87);
\node[left=1pt of {(0.5,-1)}] {$p_3$};

\node[circle,fill=black,inner sep=0pt,minimum size=5pt] at (2,0) {};
\node[below right=0pt of {(2,0)}] {$p_4$};

\draw[very thick,color=black,-stealth] (2,0)--(1.75,0.42);
\node[right=1pt of {(1.8,0.3)}] {$p_5$};

\draw[very thick,color=black,-stealth] (1.75,0.42)--(1.75,0.87);
\node[right=2pt of {(1.5,1)},color=black] {$p_6$};
\end{scope}

\begin{scope}[xshift=3cm,yshift=-1.1cm]
\draw (-0.5,0)--(2.5,0);

\draw[ultra thick] (0.1,-0.1)--(-0.7-0.1,0.7+0.1);
\draw[ultra thick] (-0.7,0.7-0.15)--(-0.7,1.7+0.15);
\draw (-0.7-0.1,1.7-0.1)--(0+0.1,2.4+0.1);

\draw[ultra thick] (2-0.1,-0.1)--(2+0.7+0.1,0.7+0.1);
\draw[ultra thick] (2+0.7,0.7-0.15)--(2+0.7,1.7+0.15);
\draw (2+0.7+0.1,1.7-0.1)--(2-0.1,2.4+0.1);

\draw (-0.7-0.15,1)--(1.1,0.5);
\draw (2+0.7+0.15,1)--(0.9,0.5);

\draw (-0.7-0.15,1.4-0.15)--(2+0.1,2.35+0.1);
\node[circle,fill=white] at (1,2) {};
\draw (-0.1,2.35+0.1)--(2.7+0.15,1.4-0.15);

\node at (-0.7,0.25) {$E_1$};
\node at (-1.1,1.2) {$E_2$};
\node at (-0.6,2.3) {$E_3$};
\node at (2.7,0.25) {$E_4$};
\node at (3.1,1.1) {$E_5$};
\node at (2.6,2.3) {$E_6$};
\end{scope}
\end{tikzpicture}
\vspace{-1em} 
\caption{Configuration of $7$ lines and $(-2)$-curves of type $2A_2$ (degree $3$)}
\vspace{-0.5em} 
\end{figure}

\noindent
Then $K_S=-3H+E_1+2E_2+3E_3+E_4+2E_5+3E_6$.
We define a set of total dual VMRTs on $\PP(T_S)$ as follows.

\begin{itemize}
\item $\breve\Dd_i$ is the total dual VMRTs associated to the family of strict transforms $C_i$ of conics passing through $p_1,\,p_2,\,p_3,\,p_4$ for $i=1$, and $p_1,\,p_4,\,p_5,\,p_6$ for $i=2$.
\end{itemize}
Then
$[C_1]=2H-E_1-2E_2-3E_3-E_4-E_5-E_6$,
$[C_2]=2H-E_1-E_2-E_3-E_4-2E_5-3E_6$, 
and no non-reduced curve exists in each linear class.
So we have
\begin{align*}
\breve\Dd_1
&= \zeta+\Pi^*(K_S+2[C_1])=\zeta+\Pi^*H-\Pi^*E_1-2\Pi^*E_2-3\Pi^*E_3-\Pi^*E_4+\Pi^*E_6,\\
\breve\Dd_2
&= \zeta+\Pi^*(K_2+2[C_2])=\zeta+\Pi^*H-\Pi^*E_1+\Pi^*E_3-\Pi^*E_4-2\Pi^*E_5-3\Pi^*E_6,
\end{align*}
which are summed up to
\(
\breve\Dd_1+\breve\Dd_2=2\zeta+2\left(\Pi^*H-\Pi^*E_1-\Pi^*E_2-\Pi^*E_3-\Pi^*E_4-\Pi^*E_5-\Pi^*E_6\right).
\)
Thus $T_S$ is not big by Lemma~\ref{not_big_lemma}.
\end{proof}

\hypertarget{A_2+2A_1}{
\begin{Lem}
Let $S$ be a weak del Pezzo surface of degree $3$ of type $A_2+2A_1$.
Then $T_S$ is not big.
\end{Lem}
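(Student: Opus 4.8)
The plan is to follow the template of Lemma~\ref{2A_2}: realize $S$ explicitly as a blow-up of $\PP^2$, produce two total dual VMRTs coming from conic-bundle structures on $S$, and add their classes to land on $2\zeta+\Pi^*(\text{effective})$, so that Lemma~\ref{not_big_lemma} applies.

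First I would realize $S=\Bl_{p_1,\dots,p_6}\PP^2$ from the bubble points with $p_3\succ p_2\succ p_1$, $p_5\succ p_4$, and $\{p_4,p_5,p_6\}$ collinear, so that
\[
K_S=-3H+E_1+2E_2+3E_3+E_4+2E_5+E_6 .
\]
A direct check of the intersection form shows that the $(-2)$-curves are exactly $E_1,E_2$ from the chain $p_3\succ p_2\succ p_1$ (which meet and give the $A_2$) together with $E_4$ and the strict transform $L$ of the line through $p_4,p_5,p_6$; since $E_4\cdot L=0$ and both are disjoint from the chain, these give the $2A_1$, so $S$ is of type $A_2+2A_1$.

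Next I would introduce two total dual VMRTs on $\PP(T_S)$: let $\breve\Cc$ be associated to the pencil of lines through $p_4$, with fibre class $[\ell]=H-E_4-E_5$, and let $\breve\Dd$ be associated to the pencil of conics through $p_1,p_2,p_3,p_6$, with fibre class $[C]=2H-E_1-2E_2-3E_3-E_6$. Both are genuine conic-bundle structures $S\to\PP^1$ whose singular fibres are reduced, so the relative tangent computation preceding Corollary~\ref{sing} (with no correction term) gives
\[
\breve\Cc=\zeta+\Pi^*(K_S+2[\ell]),\qquad \breve\Dd=\zeta+\Pi^*(K_S+2[C]).
\]
Adding these, the $H$-coefficients cancel and a short computation collapses everything to
\[
\breve\Cc+\breve\Dd=2\zeta+2\,\Pi^*E_5 .
\]
As $E_5$ is an effective $(-1)$-curve, Lemma~\ref{not_big_lemma} then shows that $T_S$ is not big. (Alternatively, the two conic pencils through $\{p_1,p_2,p_3,p_6\}$ and $\{p_1,p_2,p_4,p_5\}$ also yield a relation of the form $2\zeta+\Pi^*(\text{effective})$.)

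The step I expect to be the main obstacle is justifying that both pencils carry only reduced fibres, so that the clean formula $\zeta+\Pi^*(K_S+2[\,\cdot\,])$ holds with no correction from Corollary~\ref{sing}. For the line pencil this reduces to checking that $p_5$, although infinitely near $p_4$, is not the node of a singular fibre: after blowing up $p_4$ the ambient conic bundle is the $\PP^1$-bundle $\mathbb{F}_1\to\PP^1$, which has no singular fibres, so blowing up $p_5$ on its section only produces a reduced reducible fibre. For the conic pencil one checks likewise that neither $p_4$ nor $p_5$ is blown up at the node of a reducible member, which holds for the generic surface of type $A_2+2A_1$. A secondary point to confirm is that the imposed incidences create no $(-2)$-curve beyond the four listed, pinning down the type as exactly $A_2+2A_1$.
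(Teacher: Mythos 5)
Your proposal is correct and follows essentially the same route as the paper's proof: two total dual VMRTs, one from a pencil of lines and one from a pencil of conics, whose classes sum to $2\zeta+2\Pi^*(\text{a }(-1)\text{-curve})$, after which Lemma~\ref{not_big_lemma} applies; I checked your intersection-theoretic computation ($K_S+2[\ell]$ plus $K_S+2[C]$ collapsing to $2E_5$) and your reducedness claims for both pencils, and they hold. The only difference is cosmetic: you realize the type $A_2+2A_1$ by the bubble data $p_3\succ p_2\succ p_1$, $p_5\succ p_4$ with $\{p_4,p_5,p_6\}$ collinear and obtain $\breve\Cc+\breve\Dd=2\zeta+2\Pi^*E_5$, while the paper uses $p_6\succ p_1$ with three collinearities, lines through $p_2$ and conics through $p_1,p_4,p_5,p_6$, obtaining $\breve\Cc+\breve\Dd=2\zeta+2\Pi^*E_3$ --- the same surface type and the same argument.
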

}

\begin{proof}
The surface $S$ can be obtained by blowing up $6$ bubble points $p_1,\,\ldots,p_6$ in $\mathbb P^2$ such that 
$p_6\succ p_1$, and $\{p_1,\,p_3,\,p_6\}$, $\{p_1,\,p_2,\,p_5\}$, $\{p_2,\,p_3,\,p_4\}$ are collinear.

\vspace{-1em} 
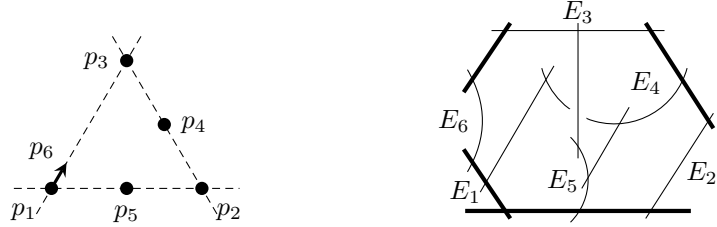
\begin{figure}[ht]
\begin{tikzpicture}[scale=1]

\begin{scope}[xshift=-3cm]
\node[circle,fill=black,inner sep=0pt,minimum size=5pt] at (0,0) {};
\node[below left=3pt of {(0,0)}] {$p_1$};

\node[circle,fill=black,inner sep=0pt,minimum size=5pt] at (2,0) {};
\node[below right=3pt of {(2,0)}] {$p_2$};

\node[circle,fill=black,inner sep=0pt,minimum size=5pt] at (1,1.7) {};
\node[left=3pt of {(1,1.7)}] {$p_3$};

\node[circle,fill=black,inner sep=0pt,minimum size=5pt] at (1.5,0.85) {};
\node[right=3pt of {(1.5,0.85)}] {$p_4$};

\node[circle,fill=black,inner sep=0pt,minimum size=5pt] at (1,0) {};
\node[below=3pt of {(1,0)}] {$p_5$};

\draw[very thick,color=black,-stealth] (0,0)--(0.2,0.34);
\node[left=1pt of {(0.2,0.5)}] {$p_6$};

\draw[densely dashed,color=black] (-0.5,0)--(2.5,0);
\draw[densely dashed,color=black] (-0.2,-0.34)--(1.2,2.04);
\draw[densely dashed,color=black] (2.2,-0.34)--(0.8,2.04);
\end{scope}

\begin{scope}[xshift=3cm,yshift=-0.3cm]
\draw[ultra thick] (-0.5,0)--(2.5,0);
\draw (-0.15,2.4)--(2.15,2.4);

\draw[ultra thick] (0.1,-0.1)--(-0.7-0.1 +0.28,1.2+0.1 -0.48);
\draw[ultra thick] (-0.7-0.1 +0.28,1.2-0.1 +0.48)--(0.1,2.4+0.1);
\draw (2-0.1,-0.1)--(2+0.7+0.1,1.2+0.1);
\draw[ultra thick] (2+0.7+0.1,1.2-0.1)--(2-0.1,2.4+0.1);
\draw (-0.7-0.05 +0.28,1.2-0.2 -0.48) to[out=60,in=-60] (-0.7-0.05 +0.28,1.2+0.2 +0.48);

\draw (1-0.1,-0.15) arc[start angle=-45, end angle=45, radius=0.8];
\draw (2.35+0.1,1.8+0.1) arc[start angle=-15, end angle=-165, radius=1];

\draw (1.05,0.3)--(1.05+0.7*0.9,0.3+1.2*0.9);
\draw (-0.3,0.25)--(-0.3+0.7*1.4,0.25+1.2*1.4);
\node[circle,fill=white] at (0.95,1.2) {};
\draw (1,2.4+0.1)--(1,0.8-0.1);

\node at (-0.5,0.25) {$E_1$};
\node at (2.65,0.5) {$E_2$};
\node at (-0.65,1.2) {$E_6$};
\node at (1,2.65) {$E_3$};
\node at (1.9,1.7) {$E_4$};
\node at (0.8,0.4) {$E_5$};
\end{scope}
\end{tikzpicture}
\vspace{-1em} 
\caption{Configuration of $8$ lines and $(-2)$-curves of type $A_2+2A_1$ (degree $3$)}
\vspace{-0.5em} 
\end{figure}

\noindent
Then $K_S=-3H+E_1+E_2+E_3+E_4+E_5+2E_6$.  
We define a set of total dual VMRTs on $\PP(T_S)$ as follows.
\begin{itemize}
\item
$\breve\Cc$ is the total dual VMRT associated to the family of the strict transforms $\ell$  of lines passing through $p_2$.
\item
$\breve\Dd$ is the total dual VMRT associated to the family of strict transforms $C$ of conics passing through $p_1$, $p_4$, $p_5$, and $p_6$.
\end{itemize}
Then
$[\ell]=H-E_2$,
$[C]=2H-E_1-E_4-E_5-2E_6$,
and no non-reduced curve exists in each linear class.
So we have 
\begin{align*}
\breve\Cc
&=\zeta+\Pi^*(K_S+2[\ell])
=\zeta-\Pi^*H+\Pi^*E_1-\Pi^*E_2+\Pi^*E_3+\Pi^*E_4+\Pi^*E_5+2\Pi^*E_6,\\
\breve\Dd
&= \zeta+\Pi^*(K_S+2[C])
=\zeta+\Pi^*H-\Pi^*E_1+\Pi^*E_2+\Pi^*E_3-\Pi^*E_4-\Pi^*E_5-2\Pi^*E_6,
\end{align*}
which are summed up to
\(
\breve\Cc+\breve\Dd=2\zeta+2\Pi^*E_3.
\)
Thus $T_S$ is not big by Lemma~\ref{not_big_lemma}. 
\end{proof}

\hypertarget{D_4}{
\begin{Lem}\label{D_4}
Let $S$ be a weak del Pezzo surface of degree $3$ of type $D_4$. Then $T_S$ is not big.
\end{Lem}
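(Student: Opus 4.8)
The plan is to follow the same strategy that worked for the previous lemmas of degree $3$ with four $(-2)$-curves: realize $S$ as a blow-up of $\PP^2$ at six bubble points whose configuration produces a $D_4$ diagram, then exhibit two (or more) total dual VMRTs on $\PP(T_S)$ whose sum equals $k\zeta+\Pi^*D$ for some effective $D$ and invoke Lemma~\ref{not_big_lemma}. First I would record an explicit bubble-point model. The $D_4$ configuration has a central $(-2)$-curve meeting three other $(-2)$-curves, so I expect a model such as $p_4\succ p_3\succ p_2\succ p_1$ (a length-four infinitely near chain giving the $A_3$ backbone $E_1,E_2,E_3$) together with a further collinearity condition that glues on the fourth branch; the precise incidences will be read off from \cite[Section 9.2.2]{Dol12} and checked against Figure~\ref{fig-deg3}. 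From the model I would write $K_S=-3H+\sum a_iE_i$ with the multiplicities dictated by the successive blow-ups.

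Next I would choose the covering families of rational curves. As in Lemma~\ref{2A_2} and the $A_2+2A_1$ lemma, the natural candidates are pencils of conics through four of the six points (and possibly a pencil of lines through one point); each such pencil has a minimal-degree class $[C]=2H-\sum E_{i_k}$ (or $[\ell]=H-E_i$) with \emph{no} non-reduced member, so by Corollary~\ref{sing} its total dual VMRT is exactly $\breve\Dd=\zeta+\Pi^*(K_S+2[C])$. The heart of the proof is a linear-algebra identity: I need to pick these families so that
\[
\sum_j \breve\Dd_j = k\zeta+\Pi^*D
\]
with $k>0$ and $D=\sum_j(K_S+2[C_j])+\text{(from $\breve\Cc$ terms)}$ an effective combination of the $E_i$ and of $H$. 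Concretely I would try two conic pencils (and if needed one line pencil) and solve for the coefficients so that the $\zeta$-coefficient is positive and the $\Pi^*$-part lands in the effective cone; by analogy with the $2A_2$ case I anticipate something like $\breve\Dd_1+\breve\Dd_2=2\zeta+2\Pi^*(\text{effective})$.

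The main obstacle is selecting the right covering families so that the residual divisor $D$ is manifestly effective while keeping $k>0$. In the $D_4$ case the symmetry is higher than in $2A_2$, so there may be several competing choices of four-point conic pencils, and some of them will carry non-reduced members (for instance a conic class that forces a $(-2)$-curve to split off), in which case the $\breve\Dd$ formula acquires extra $\Pi^*E_i$ corrections via Corollary~\ref{sing}; I would have to track those corrections carefully, exactly as the correction $-\Pi^*E_5$ appears in Lemma~\ref{lem1_deg4}. A secondary subtlety is verifying that each chosen class really is base-point-free enough to give an honest conic pencil on $S$ (i.e.\ that it defines a genuine conic bundle after contracting the relevant $(-2)$-curves), which is what legitimizes applying Corollary~\ref{sing}; this is the analogue of the nefness checks discussed in the Remark following the definition of weak del Pezzo surfaces. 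Once the identity $\sum_j\breve\Dd_j=k\zeta+\Pi^*D$ with effective $D$ is established, Lemma~\ref{not_big_lemma} immediately yields that $T_S$ is not big, completing the proof.
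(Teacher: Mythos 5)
Your meta-strategy is exactly the paper's (exhibit total dual VMRTs of conic pencils, sum them to $k\zeta+\Pi^*D$ with $D$ effective, apply Lemma~\ref{not_big_lemma}), but each of the concrete guesses in your plan fails for $D_4$, and those details are where the entire content of the proof lies. First, the model: a length-four infinitely near chain $p_4\succ p_3\succ p_2\succ p_1$ yields an $A_3$ \emph{path} of $(-2)$-curves, and imposing a collinearity can only attach a further node at an end of the path --- this is in fact the paper's model for type $A_4$, not $D_4$. A trivalent node cannot arise from a single chain. The correct model is three closed points on a line, each carrying one infinitely near point off the line: $p_4\succ p_1$, $p_5\succ p_2$, $p_6\succ p_3$ with $\{p_1,p_2,p_3\}$ collinear, so that $K_S=-3H+E_1+E_2+E_3+2E_4+2E_5+2E_6$ and the central node of $D_4$ is the strict transform $H-E_1-\cdots-E_6$ of the line, meeting the three $(-2)$-curves $E_1,E_2,E_3$.

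Second, your default expectation that the conic pencils have no non-reduced member is wrong for \emph{every} pencil available here: the pencil through $p_i,p_{i+3},p_j,p_{j+3}$, of class $[C_{ij}]=2H-E_i-E_j-2E_{i+3}-2E_{j+3}$, contains the non-reduced member $2(H-E_1-\cdots-E_6)+E_i+E_j+2E_k+2E_{k+3}$ (twice the central line plus $(-2)$-curves, $k$ the third index), and the resulting correction is not merely ``extra $\Pi^*E_i$'': one must subtract the full non-reduced part, including the line class, giving for instance
\[
\breve\Dd_{12}
=\zeta+\Pi^*(K_S+2[C_{12}])-\Pi^*(H-E_1-\cdots-E_6)-\Pi^*E_3-\Pi^*E_6
=\zeta+\Pi^*(E_3-E_4-E_5+2E_6).
\]
Third, your anticipated identity with two pencils fails: $\breve\Dd_{12}+\breve\Dd_{13}=2\zeta+\Pi^*(E_2+E_3-2E_4+E_5+E_6)$, and the class $E_2+E_3-2E_4+E_5+E_6$ is not effective (it has $H$-degree zero, the only irreducible curves of $H$-degree zero are $E_1,\ldots,E_6$, which are independent in $\NS(S)$, and the coefficient of $E_4$ is negative). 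The negative terms cancel only in the fully symmetric sum of all three pencils,
$\breve\Dd_{12}+\breve\Dd_{13}+\breve\Dd_{23}=3\zeta+\Pi^*(E_1+E_2+E_3)$,
which is the identity the paper uses before invoking Lemma~\ref{not_big_lemma}. So your outline would succeed if executed with the correct model, the corrections of Corollary~\ref{sing} tracked in full (line class included), and all three pencils; as written, the proposal does not yet constitute a proof.
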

}

\begin{proof}
The surface $S$ can be obtained by blowing up $6$ bubble points $p_1,\,\ldots,p_6$ in $\mathbb P^2$ such that
$p_4\succ p_1$, $p_5\succ p_2$, $p_6\succ p_3$, and $\{p_1,\,p_2,\,p_3\}$ is collinear.

\begin{figure}[ht]
\begin{tikzpicture}[scale=1]

\begin{scope}[xshift=-3cm]
\draw[densely dashed] (-0.5,0)--(3.5,0);

\node[circle,fill=black,inner sep=0pt,minimum size=5pt] at (0,0) {};
\node[below left=3pt of {(0,0)}] {$p_1$};

\draw[very thick,color=black,-stealth] (0,0)--(0.25,-0.42);
\node[right=2pt of {(0.25,-0.42)}] {$p_4$};

\node[circle,fill=black,inner sep=0pt,minimum size=5pt] at (3,0) {};
\node[below right=3pt of {(3,0)}] {$p_3$};

\draw[very thick,color=black,-stealth] (3,0)--(2.75,0.42);
\node[left=2pt of {(2.75,0.42)}] {$p_6$};

\node[circle,fill=black,inner sep=0pt,minimum size=5pt] at (1.5,0) {};
\node[below=3pt of {(1.5,0)}] {$p_2$};

\draw[very thick,color=black,-stealth] (1.5,0)--(1.5,0.45);
\node[above=2pt of {(1.5,0.45)}] {$p_5$};
\end{scope}

\begin{scope}[xshift=3cm,yshift=-0.5cm]
\draw[ultra thick] (-0.5,0)--(3.5,0);
\draw[ultra thick] (0,-0.15)--(0,0.6+0.1);
\draw[ultra thick] (1.5,-0.15)--(1.5,0.6+0.1);
\draw[ultra thick] (3,-0.15)--(3,0.6+0.1);

\draw (0-0.1,0.6-0.1) to[out=30,in=-30] (0-0.1,1.2+0.1);
\draw (1.5-0.1,0.6-0.1) to[out=30,in=-30] (1.5-0.1,1.2+0.1);
\draw (3-0.1,0.6-0.1) to[out=30,in=-30] (3-0.1,1.2+0.1);

\draw (0-0.1,1.2-0.1)--(1.5+0.1,2+0.1);
\draw (1.5,1.2-0.1)--(1.5,2+0.1);
\draw (3+0.1,1.2-0.1)--(1.5-0.1,2+0.1);

\node at (0-0.3,0.3) {$E_1$};
\node at (1.5-0.3,0.3) {$E_2$};
\node at (3+0.3,0.3) {$E_3$};
\node at (0-0.3,0.9) {$E_4$};
\node at (1.5-0.3,0.9) {$E_5$};
\node at (3+0.4,0.9) {$E_6$};
\end{scope}
\end{tikzpicture}
\vspace{-1em} 
\caption{Configuration of $6$ lines and $(-2)$-curves of type $D_4$ (degree $3$)}
\vspace{-0.5em} 
\end{figure}
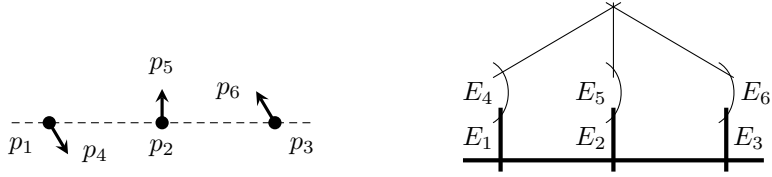

\noindent
Then $K_S=-3H+E_1+E_2+E_3+2E_4+2E_5+2E_6$. 
We define a set of total dual VMRTs on $\PP(T_S)$ as follows.
\begin{itemize}
\item $\breve\Dd_{ij}$ is the total dual VMRT associated to the family of strict transforms $C_{ij}$ of conics passing through $p_i$, $p_{i+3}$, $p_{j}$ and $p_{j+3}$. 
\end{itemize}
Then 
$[C_{12}]=2H-E_1-E_2-2E_4-2E_5$ with the non-reduced curve $2(H-E_1-E_2-\cdots-E_6)+E_1+E_2+2E_3+2E_6$,
$[C_{13}]=2H-E_1-E_3-2E_4-2E_6$ with the non-reduced curve $2(H-E_1-E_2-\cdots-E_6)+E_1+E_3+2E_2+2E_5$, and 
$[C_{23}]=2H-E_2-E_3-2E_5-2E_6$ with the non-reduced curve $2(H-E_1-E_2-\cdots-E_6)+E_2+E_3+2E_1+2E_4$.
So we have
{\small\begin{align*}
\breve\Dd_{12}
&= \zeta+\Pi^*(K_S+2[C_{12}])
-\Pi^*(H-E_1-\cdots-E_6)
-\Pi^*E_3-\Pi^*E_6
=\zeta+\Pi^*E_3-\Pi^*E_4-\Pi^*E_5+2\Pi^*E_6,\\
\breve\Dd_{13}
&= \zeta+\Pi^*(K_S+2[C_{13}])
-\Pi^*(H-E_1-\cdots-E_6)
-\Pi^*E_2-\Pi^*E_5
=\zeta+\Pi^*E_2-\Pi^*E_4+2\Pi^*E_5-\Pi^*E_6,\\
\breve\Dd_{23}
&= \zeta+\Pi^*(K_S+2[C_{23}])
-\Pi^*(H-E_1-\cdots-E_6)
-\Pi^*E_1-\Pi^*E_4
=\zeta+\Pi^*E_1-\Pi^*E_5+2\Pi^*E_4-\Pi^*E_6,
\end{align*}}
which are summed up to $\breve\Dd_{12}+\breve\Dd_{13}+\breve\Dd_{23}=3\zeta+\Pi^*E_1+\Pi^*E_2+\Pi^*E_3$.
Thus $T_S$ is not big by Lemma~\ref{not_big_lemma}.
\end{proof}

\hypertarget{A_4}{
\begin{Lem}
Let $S$ be a weak del Pezzo surface of degree $3$ of type $A_4$. Then $T_S$ is not big.
\end{Lem}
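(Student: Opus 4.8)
The plan is to mirror the other degree-three computations (the lemmas for types $2A_2$, $A_2+2A_1$, $D_4$): realize $S$ explicitly as a blow-up of $\PP^2$ at six bubble points so that the four $(-2)$-curves form an $A_4$-chain, produce two total dual VMRTs on $\PP(T_S)$ from conic pencils adapted to the configuration, and add them to a class of the form $k\zeta+\Pi^*D$ with $D$ effective, so that Lemma~\ref{not_big_lemma} applies. Concretely, I would take $S=\Bl_{p_1,\dots,p_6}\PP^2$ with $p_2\succ p_1$, $p_6\succ p_5\succ p_3$, and $\{p_1,p_3,p_4\}$ collinear. Then the $(-2)$-curves are $E_1$, $E_3$, $E_5$ together with the strict transform $B$ of the line through $p_1,p_3,p_4$, and one checks on intersection numbers that they form the chain $E_1-B-E_3-E_5$, i.e.\ type $A_4$; here $K_S=-3H+E_1+2E_2+E_3+E_4+2E_5+3E_6$.

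Next I would choose the pencil $\breve\Cc$ of conics through $p_1$ osculating to order two at $p_3$ (class $2H-E_1-E_2-E_3-2E_5-3E_6$) and the pencil $\breve\Dd$ of lines through $p_4$ (class $H-E_4$); both are conic bundles in the sense of the Preliminaries, since a fiber has $-K_S$-degree $2$. Using $\breve{\mathcal V}=\zeta+\Pi^*(K_S+2[\,\cdot\,])$ for a conic pencil with no non-reduced fibers, one obtains
\[
\breve\Cc=\zeta+\Pi^*(H-E_1-E_3+E_4-2E_5-3E_6),\qquad
\breve\Dd=\zeta+\Pi^*(-H+E_1+2E_2+E_3-E_4+2E_5+3E_6),
\]
which add up to $\breve\Cc+\breve\Dd=2\zeta+2\Pi^*E_2$. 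Since $E_2$ is effective, Lemma~\ref{not_big_lemma} (with $k=2$ and $D=2E_2$) shows that $T_S$ is not big.

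The step that needs genuine care — and which I expect to be the main obstacle — is justifying that neither pencil contributes a non-reduced-fiber correction, i.e.\ that the naive formula $\zeta+\Pi^*(K_S+2[C])$ is \emph{exactly} the class of $\breve\Cc$ and $\breve\Dd$, with no $-\Pi^*(\cdots)$ term of the kind appearing in Corollary~\ref{sing}. All four $(-2)$-curves are vertical for $\breve\Dd$, and three of them ($B$, $E_3$, $E_5$) are vertical for $\breve\Cc$, so a priori they could sit in fibers with multiplicity. The key check is that in each case the vertical $(-2)$-curves assemble into a single \emph{reduced} fiber (a chain $E_2-E_1-B-E_3-E_5-E_6$ of $(-1)$- and $(-2)$-curves for $\breve\Dd$, and a chain $E_4-B-E_3-E_5-T_3$, with $T_3$ the tangent line to the branch at $p_3$, for $\breve\Cc$); a count of Euler numbers against $\chi_{\mathrm{top}}(S)=9$ then forces every other fiber to be a smooth conic or an ordinary node, hence reduced, so no correction occurs. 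This is precisely where the choice of families matters: a conic pencil osculating to the full depth of an infinitely near chain would instead produce a fiber of type $2T+E+2E'+E''$ whose non-reduced correction cancels the contribution of $E_2$ and destroys the effectivity of the residual divisor, so one must spread the base conditions across the configuration as above. Verifying these fiber types explicitly (or, equivalently, identifying the relatively minimal conic bundle and applying Corollary~\ref{sing}) is the only non-formal part of the argument.
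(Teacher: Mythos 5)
Your proof is correct and follows essentially the same strategy as the paper's own proof of this lemma: produce divisorial total dual VMRTs on $\PP(T_S)$ whose sum has the form $2\zeta+\Pi^*(\text{effective})$ and apply Lemma~\ref{not_big_lemma}; your intersection-theoretic details check out (the fibers of both pencils are indeed reduced, with singular fibers the chains $E_4$--$B$--$E_3$--$E_5$--$T_3$ and $E_2$--$E_1$--$B$--$E_3$--$E_5$--$E_6$, and the Euler-number count against $\chi_{\rm top}(S)=9$ rules out any further degeneration). The paper merely makes different choices of the same kind: it realizes $S$ via $p_3\succ p_2\succ p_1$, $p_5\succ p_4$, $\{p_1,p_2,p_4\}$ collinear, and uses two conic pencils (through $p_1,p_2,p_3,p_6$ and through $p_1,p_4,p_5,p_6$), both free of non-reduced fibers, obtaining $\breve\Dd_1+\breve\Dd_2=2\zeta+2\Pi^*(H-E_1-E_2-E_3-E_6)$ in place of your $\breve\Cc+\breve\Dd=2\zeta+2\Pi^*E_2$.
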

}

\begin{proof} 
The surface $S$ can be obtained by blowing up $6$ bubble points $p_1,\,\ldots,p_6$ in $\mathbb P^2$ such that
$p_3\succ p_2\succ p_1$, $p_5\succ p_4$, and $\{p_1,p_2,p_4\}$ is collinear.

\vspace{-1em} 
\begin{figure}[ht]
\begin{tikzpicture}[scale=1]

\begin{scope}[xshift=-3cm]
\node[circle,fill=black,inner sep=0pt,minimum size=5pt] at (0,0) {};
\node[below left=3pt of {(0,0)}] {$p_1$};
\node[below left=3pt of {(0.7,0)}] {$p_2$};
\node[below left=3pt of {(1,0.9)}] {$p_3$};

\node[circle,fill=black,inner sep=0pt,minimum size=5pt] at (2,0) {};
\node[below right=3pt of {(2,0)}] {$p_4$};
\node[below left=3pt of {(1.8,0.7)}] {$p_5$};

\node[circle,fill=black,inner sep=0pt,minimum size=5pt] at (1,1.7) {};
\node[left=3pt of {(0.9,1.8)}] {$p_6$};

\draw[very thick,color=black,-stealth] (0,0)--(0.5,0.0);

\draw[very thick,color=black,-stealth] (0.5,0)--(0.75,0.42);

\draw[very thick,color=black,-stealth] (2,0)--(1.61,0.225);

\draw[densely dashed,color=black] (-0.5,0)--(2.5,0);
\draw[densely dashed,color=black] (-0.2,-0.34)--(1.2,2.04);
\draw[densely dashed,color=black] (2.2,-0.34)--(0.8,2.04);
\end{scope}

\begin{scope}[xshift=3cm,yshift=-0.2cm]
\draw[ultra thick] (0.35,0)--(2.5,0);
\draw (-0.15,2.4)--(2.15,2.4);

\draw[ultra thick] (0.1 -0.3,-0.1 +0.5)--(-0.7-0.1,1.2+0.1);
\draw (-0.7-0.1,1.2-0.1)--(0.1,2.4+0.1);
\draw[ultra thick] (2-0.1,-0.1)--(2+0.7+0.1,1.2+0.1);
\draw (2+0.7+0.1,1.2-0.1)--(2-0.1,2.4+0.1);
\draw[ultra thick] (-0.3-0.1,0.5) to[out=30,in=90] (0.5,0-0.1);

\draw (0.1,0.3)--(0.6,0.9);
\draw (-0.35-0.15,1.8) to[out=-20,in=200] (1.35+0.1,1.8);
\draw (2.35+0.1,0.6-0.1) to[out=170,in=-90] (1.35-0.1,1.8+0.1);

\node at (-0.75,0.7) {$E_1$};
\node at (0,0.2) {$E_2$};
\node at (0.8,1.1) {$E_3$};
\node at (2.65,0.5) {$E_4$};
\node at (1.9,1.1) {$E_5$};
\node at (1,2.65) {$E_6$};
\end{scope}
\end{tikzpicture}
\vspace{-1em} 
\caption{Configuration of $6$ lines and $(-2)$-curves of type $A_4$ (degree $3$)}
\vspace{-0.5em} 
\end{figure}
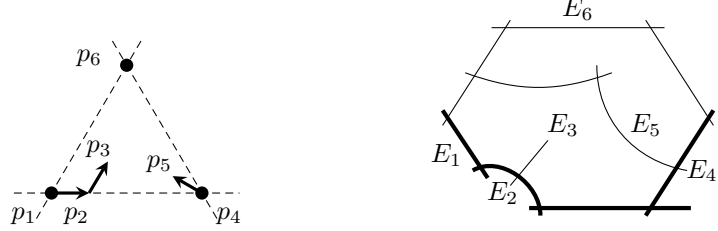

\noindent
Then $K_S=-3H+E_1+2E_2+3E_3+E_4+2E_5+E_6$. 
We define a set of total dual VMRTs on $\PP(T_S)$ as follows.

\begin{itemize}
\item
$\breve\Dd_1$ is the total dual VMRT associated to the family of strict transforms $C_1$ of conics on $\PP^2$ passing through $p_1$, $p_2$, $p_3$, and $p_6$,
\item
$\breve\Dd_2$ is the total dual VMRT associated to the family of strict transforms $C_2$ of conics on $\PP^2$ passing through $p_1$, $p_4$, $p_5$, and $p_6$.
\end{itemize}
Then
$[C_1]=2H-E_1-2E_2-3E_3-E_6$,
$[C_2]=2H-E_1-E_2-E_3-E_4-2E_5-E_6$,
and no non-reduced curve exists in each linear class.
So we have 
\begin{align*}
\breve\Dd_1
&=\zeta+\Pi^*(K_S+2[C_1])
=\zeta+\Pi^*H-\Pi^*E_1-2\Pi^*E_2-3\Pi^*E_3+\Pi^*E_4+2\Pi^*E_5-\Pi^*E_6,\\
\breve\Dd_2
&= \zeta+\Pi^*(K_S+2[C_2])
=\zeta+\Pi^*H-\Pi^*E_1+\Pi^*E_3-\Pi^*E_4-2\Pi^*E_5-\Pi^*E_6,
\end{align*}
which are summed up to $\breve\Dd_1+\breve\Dd_2=2\zeta+2\Pi^*(H-E_1-E_2-E_3-E_6)$.
Thus $T_S$ is not big by Lemma~\ref{not_big_lemma} because $H-E_1-E_2-E_3-E_6$ is the class of the strict transform of the line connecting $p_1$ and $p_6$.  
\end{proof}

\hypertarget{A_3+A_1}{
\begin{Lem}\label{A_3+A_1}
Let $S$ be a weak del Pezzo surface of degree $3$ of type $A_3+A_1$.
Then $T_S$ is not big.
\end{Lem}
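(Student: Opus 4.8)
The plan is to argue exactly as in Lemmas~\ref{2A_2} and \ref{D_4}: realize $S$ as a blow-up of $\PP^2$, produce two total dual VMRTs on $\PP(T_S)$ attached to conic pencils, and verify that their sum has the form $k\zeta+\Pi^*D$ with $k>0$ and $D$ effective, so that Lemma~\ref{not_big_lemma} applies. As in the proof of Theorem~\ref{degree 3}, together with Lemma~\ref{domination}, it is enough to exhibit one weak del Pezzo surface of degree $3$ of type $A_3+A_1$ with non-big tangent bundle. Concretely, I would obtain $S$ by blowing up a tower $p_4\succ p_3\succ p_2\succ p_1$ (chosen not to lie on a line) together with a pair $p_6\succ p_5$, the points being otherwise general. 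The strict transforms of the first three exceptional curves of the tower form an $A_3$-chain of $(-2)$-curves, and the strict transform of the exceptional curve over $p_5$ is a $(-2)$-curve disjoint from that chain, producing the $A_1$. A direct verification then shows that $-K_S$ is nef and big, that these four are the only $(-2)$-curves, and that $S$ has degree $3$ and type $A_3+A_1$, with $K_S=-3H+\sum_{i=1}^{6}E_i$.

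I would then set $\breve\Cc_1$ to be the total dual VMRT of the conic pencil through $p_1,p_2,p_3,p_4$ and $\breve\Cc_2$ that of the conic pencil through $p_1,p_2,p_5,p_6$; the first osculates the tower to order four, while the second is tangent to the tower at $p_1$ and to the pair at $p_5$. Using the identity $\breve\Cc=\zeta+\Pi^*(K_S+2[C])$ recalled in Section~2 (a special case of Corollary~\ref{sing}), a short computation gives $\breve\Cc_1+\breve\Cc_2=2\zeta+2\Pi^*\Lambda$, where $\Lambda$ is the class of the strict transform of the line through $p_1$ and $p_2$; the contributions of $p_3,p_4$ and of $p_5,p_6$ cancel in the sum. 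Since $\Lambda$ is effective, Lemma~\ref{not_big_lemma} shows that $T_S$ is not big.

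The parts that require care are the following. First, one must confirm that the chosen pencils carry no non-reduced fibre, so that the clean formula $\breve\Cc=\zeta+\Pi^*(K_S+2[C])$ holds with no further $\Pi^*$-correction; the only delicate member is the reducible fibre of $\breve\Cc_2$, namely the union of the two tangent lines (to the tower at $p_1$ and to the pair at $p_5$), but this fibre is reduced, so no correction is needed, and $\breve\Cc_1$ has only nodal singular fibres. Second, one must check the genericity of the tower, so that $S$ really has type $A_3+A_1$ with no $(-3)$-curve, and that $\Lambda$ is effective (it is the class of an actual line, hence a $(-1)$-curve on $S$). I expect the main obstacle to be the first point: pinning down the reducible and osculating members of the two pencils precisely enough to be sure that no non-reduced fibre appears and that the residual divisor is exactly the effective class $\Lambda$ above, rather than a class to which Lemma~\ref{not_big_lemma} does not apply.
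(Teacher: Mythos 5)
There is a genuine gap, and it lies exactly where you feared: the claim that your two pencils carry no non-reduced fibre is false, and it fails for both pencils. Work in your total-transform notation (which is what your formula $K_S=-3H+\sum_{i=1}^6 E_i$ presupposes; note this is not the paper's convention, where the $E_i$ are strict transforms and a length-$4$ tower would give $K_S=-3H+E_1+2E_2+3E_3+4E_4+E_5+2E_6$). With the tower $p_4\succ p_3\succ p_2\succ p_1$, the pencil $|2H-E_1-E_2-E_3-E_4|$ contains the effective member
\[
2(H-E_1-E_2)+(E_1-E_2)+2(E_2-E_3)+(E_3-E_4),
\]
i.e.\ the \emph{double} of the tangent line to the tower plus the $A_3$-chain of $(-2)$-curves with the middle one doubled (the classes visibly add up to $2H-E_1-E_2-E_3-E_4$). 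Likewise $|2H-E_1-E_2-E_5-E_6|$ contains the non-reduced member $2(H-E_1-E_5)+(E_1-E_2)+(E_5-E_6)$, the double of the line through $p_1,p_5$ completed by two $(-2)$-curves; this member exists precisely because you made $p_6$ infinitely near to $p_5$, so that $E_5-E_6$ is effective. Such double-line members supported on towers are exactly the phenomenon the paper has to correct for in Proposition~\ref{A_3+2A_1}, Proposition~\ref{E_6} and Proposition~\ref{2A_3+A_1}. Applying the corrected formula $\breve\Cc=\zeta+\Pi^*(K_S+2F-D)$ (Serrano's $D$-term, cf.\ Corollary~\ref{sing}), the excesses are $D_1=(H-E_1-E_2)+(E_2-E_3)$ and $D_2=H-E_1-E_5$, and the sum becomes
\[
\breve\Cc_1+\breve\Cc_2=2\zeta+\Pi^*\bigl(E_3+E_5-2E_2\bigr),
\]
whose $\Pi^*$-part is \emph{not} effective ($-2E_2+E_3+E_5=-2(E_2-E_3)-(E_3-E_4)-E_4+E_5$ has negative strict-transform coefficients), so Lemma~\ref{not_big_lemma} does not apply and your argument collapses. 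The paper avoids this trap by choosing a different marking: a tower of length $3$ with the collinearities $\{p_1,p_2,p_3\}$ and $\{p_1,p_4,p_5\}$, and the pencils of lines through the closed point $p_4$ and of conics through $p_1,p_2,p_5,p_6$ with $p_5,p_6$ independent closed points; there no difference of exceptional classes can complete a double line to a member, both linear systems are checked to have no non-reduced curve, and the sum is cleanly $\breve\Cc+\breve\Dd=2\zeta+2\Pi^*E_3$.

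A second, logical, gap: your opening reduction ``it is enough to exhibit one weak del Pezzo surface of degree $3$ of type $A_3+A_1$ with non-big tangent bundle'' is not justified by Lemma~\ref{domination} (which compares a surface with its blow-downs, not two surfaces of the same type), nor by Lemma~\ref{specialization}, whose semicontinuity goes the wrong way: non-bigness proved on a \emph{general} member of the family does not propagate to special members of the same type, since $h^0(\Sym^m T_S)$ can jump up under specialization. The paper's proof instead fixes a construction realizing \emph{every} surface of type $A_3+A_1$ (any choice of points with the prescribed incidences) and runs a class-level computation valid uniformly for all of them; if you keep your configuration you would need the analogous uniformity plus an argument (e.g.\ Weyl-group conjugacy of the marking) that your construction exhausts the type — and, more importantly, a different pair of pencils, since the ones you chose do not satisfy the hypotheses of Lemma~\ref{not_big_lemma}.
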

}

\begin{proof}      
The surface $S$ can be obtained by blowing up $6$ bubble points $p_1,\,\ldots,p_6$ in $\mathbb P^2$ such that
$p_3\succ p_2\succ p_1$, and $\{p_1,\,p_2,\,p_3\}$, $\{p_1,\,p_4,\,p_5\}$ are collinear.

\vspace{-0.5em} 
\begin{figure}[ht]
\begin{tikzpicture}[scale=1]
\begin{scope}[xshift=-3cm]
\node[circle,fill=black,inner sep=0pt,minimum size=5pt] at (0,0) {};
\node[below left=3pt of {(0,0)}] {$p_1$};
\node[below left=3pt of {(0.2,0.7)}] {$p_2$};
\node[below left=3pt of {(0.5,1.2)}] {$p_3$};

\node[circle,fill=black,inner sep=0pt,minimum size=5pt] at (2,0) {};
\node[below right=3pt of {(2,0)}] {$p_5$};

\node[circle,fill=black,inner sep=0pt,minimum size=5pt] at (1,0) {};
\node[below=3pt of {(1,0)}] {$p_4$};

\node[circle,fill=black,inner sep=0pt,minimum size=5pt] at (1.5,0.85) {};
\node[right=3pt of {(1.5,0.85)}] {$p_6$};

\draw[very thick,color=black,-stealth] (0,0)--(0.25,0.42);

\draw[very thick,color=black,-stealth] (0.25,0.42)--(0.5,0.84);

\draw[densely dashed,color=black] (-0.5,0)--(2.5,0);

\draw[densely dashed,color=black] (-0.2,-0.34)--(1.2,2.04);

\draw[densely dashed,color=black] (2.2,-0.34)--(0.8,2.04);
\end{scope}

\begin{scope}[xshift=3cm,yshift=-0.2cm]
\draw[ultra thick] (-0.5,0)--(2.5,0);

\draw[ultra thick] (0+0.1,0-0.1)--(-0.41/2-0.1,1.26/2+0.1);
\draw[ultra thick] (-0.41+1.41/3+0.05,1.26+1.14/3+0.1)--(1+0.1,2.4+0.1);
\draw (2-0.1,0-0.1)--(2.41+0.05,1.26+0.1);
\draw (2.41+0.1,1.26-0.1)--(1-0.1,2.4+0.1);

\draw[ultra thick] (-0.41/2-0.1,1.26/2-0.1) to[out=30,in=-30] (-0.41/2-0.05,1.26+0.1);
\draw (-0.41/2-0.1,1.26-0.1) to[out=30,in=-75] (-0.41+1.41/3+0.15,1.26+1.14/3+0.3);

\draw (1-0.1,-0.15) arc[start angle=-45, end angle=45, radius=0.7];
\draw (3.41/2+0.1,3.66/2+0.1) arc[start angle=-15, end angle=-105, radius=1];

\draw (-0.2,0.2)--(-0.2+1.1,0.2+1.1);
\draw (1,0.6)--(1,1.3);

\node at (-0.4,0.25) {$E_1$};
\node at (-0.45,0.97) {$E_2$};
\node at (-0.15,1.65) {$E_3$};
\node at (0.8,0.3) {$E_4$};
\node at (2.4,0.5) {$E_5$};
\node at (1.2,1.6) {$E_6$};
\end{scope}
\end{tikzpicture}
\vspace{-1em} 
\caption{Configuration of $7$ lines and $(-2)$-curves of type $A_3+A_1$ (degree $3$)}
\vspace{-0.5em} 
\end{figure}
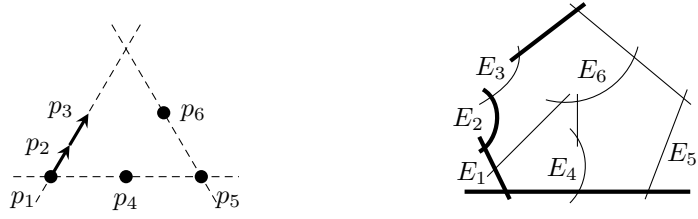

\noindent
Then $K_S=-3H+E_1+2E_2+3E_3+E_4+E_5+E_6$. 
We define a set of total dual VMRTs on $\PP(T_S)$ as follows.

\begin{itemize}
\item
$\breve\Cc$ is the total dual VMRT associated to the family of strict transforms of lines $\ell$ on $\PP^2$ passing through $p_4$.
\item
$\breve\Dd$ is the total dual VMRT associated to the family of strict transforms of conics $C$ on $\PP^2$ passing through $p_1$, $p_2$, $p_5$, and $p_6$.
\end{itemize}
Then
$[\ell]=H-E_4$,
$[C]=2H-E_1-2E_2-2E_3-E_5-E_6$,
and no non-reduced curve exists in each linear class.
So we have 
\begin{align*}
\breve\Cc
&= \zeta+\Pi^*(K_S+2[\ell])
=\zeta-\Pi^*H+\Pi^*E_1+2\Pi^*E_2+3\Pi^*E_3-\Pi^*E_4+\Pi^*E_5+\Pi^*E_6,\\
\breve\Dd
&= \zeta+\Pi^*(K_S+2[C])
=\zeta+\Pi^*H-\Pi^*E_1-2\Pi^*E_2-\Pi^*E_3+\Pi^*E_4-\Pi^*E_5-\Pi^*E_6,
\end{align*}
which are summed up to $\breve\Cc+\breve\Dd=2\zeta+2\Pi^*E_3$.
Thus $T_S$ is not big by Lemma~\ref{not_big_lemma}. 
\end{proof}

\hypertarget{4A_1}{
\begin{Lem}\label{4A_1}
Let $S$ be a weak del Pezzo surface of degree $3$ of type $4A_1$.
Then $T_S$ is not big.
\end{Lem}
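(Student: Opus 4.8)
The plan is to realize $S$ as the blow-up of $\PP^2$ at the six vertices $p_1,\dots,p_6$ of a complete quadrilateral and then, via Lemma~\ref{not_big_lemma}, to produce three total dual VMRTs on $\PP(T_S)$ whose weighted sum is a positive multiple of $\zeta$. Concretely, I label the points so that the four lines of the quadrilateral are the collinear triples $\{p_1,p_2,p_3\}$, $\{p_1,p_4,p_5\}$, $\{p_2,p_4,p_6\}$, $\{p_3,p_5,p_6\}$; their strict transforms are four disjoint $(-2)$-curves, giving the type $4A_1$, and $K_S=-3H+E_1+\cdots+E_6$. The three pairs of opposite vertices are $\{p_1,p_6\}$, $\{p_2,p_5\}$, $\{p_3,p_4\}$; I single out $\{p_1,p_6\}$ together with the complementary four points $p_2,p_3,p_4,p_5$.

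First I would treat the two easy pencils. Let $\breve\Cc_1$ and $\breve\Cc_6$ be the total dual VMRTs of the pencils of lines through $p_1$ and through $p_6$, of fiber classes $H-E_1$ and $H-E_6$. The only reducible fibers of these pencils have the shape $(H-E_1-E_2-E_3)+E_2+E_3$ and the like, which are reduced, so there are no non-reduced fibers and the formula $\breve\Cc=\zeta-\Pi^*T_{S/B}=\zeta+\Pi^*(K_S+2F)$ gives
\[
\breve\Cc_1=\zeta+\Pi^*(-H-E_1+E_2+E_3+E_4+E_5+E_6),\qquad
\breve\Cc_6=\zeta+\Pi^*(-H+E_1+E_2+E_3+E_4+E_5-E_6).
\]

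The heart of the argument, and the step I expect to be the main obstacle, is the VMRT $\breve\Dd$ of the conic pencil through $p_2,p_3,p_4,p_5$, of fiber class $2H-E_2-E_3-E_4-E_5$. This pencil has three reducible members: two of them are the pairs of quadrilateral lines $\{p_1,p_2,p_3\}\cup\{p_1,p_4,p_5\}$ and $\{p_2,p_4,p_6\}\cup\{p_3,p_5,p_6\}$, whose nodes lie precisely at the blown-up points $p_1$ and $p_6$, while the third is a pair of genuine diagonals and is reduced. Hence the two special fibers are $\ell+\ell'+2E_1$ and $\ell''+\ell'''+2E_6$, so the fixed (non-reduced) part of the fibration is $D=E_1+E_6$, and
\[
\breve\Dd=\zeta+\Pi^*\big(K_S+2(2H-E_2-E_3-E_4-E_5)\big)-\Pi^*(E_1+E_6)=\zeta+\Pi^*(H-E_2-E_3-E_4-E_5).
\]
Carrying out this non-reduced-fiber bookkeeping correctly, exactly as in Lemmas~\ref{lem1_deg4} and~\ref{D_4}, is the delicate point; everything else is linear algebra.

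Finally I would observe that the $\Pi^*$-parts cancel in the combination
\[
2\breve\Dd+\breve\Cc_1+\breve\Cc_6=4\zeta,
\]
since $2(H-E_2-E_3-E_4-E_5)+(-2H+2E_2+2E_3+2E_4+2E_5)=0$. As the right-hand side is $4\zeta+\Pi^*0$ with $4>0$ and the zero divisor effective, Lemma~\ref{not_big_lemma} shows that $T_S$ is not big. By the evident $S_4$-symmetry permuting the four lines, an identical relation holds for each of the three opposite pairs, so the choice of $\{p_1,p_6\}$ is immaterial.
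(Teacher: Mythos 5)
Your proposal is correct and takes essentially the same approach as the paper: you compute total dual VMRTs of line and conic pencils, handle the non-reduced degenerate fibers exactly as in Corollary~\ref{sing} (correctly identifying the fibers $\ell+\ell'+2E_1$ and $\ell''+\ell'''+2E_6$ of the conic pencil, so $\breve\Dd=\zeta+\Pi^*(H-E_2-E_3-E_4-E_5)$), and conclude via Lemma~\ref{not_big_lemma}. The only difference is economy of choice: your relation $2\breve\Dd+\breve\Cc_1+\breve\Cc_6=4\zeta$ uses one conic pencil and two line pencils, whereas the paper sums all six line pencils and three conic pencils (with multiplicity $2$) to get $12\zeta$; both combinations are valid instances of the same lemma.
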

}

\begin{proof}
The surface $S$ can be obtained by blowing up $6$ closed points $p_1,\,\ldots,\,p_6$ in $\mathbb P^2$ such that
$\{p_1,\,p_4,\,p_6\}$, $\{p_2,\,p_5,\,p_6\}$, $\{p_1,\,p_3,\,p_5\}$, $\{p_2,\,p_3,\,p_4\}$ are collinear.

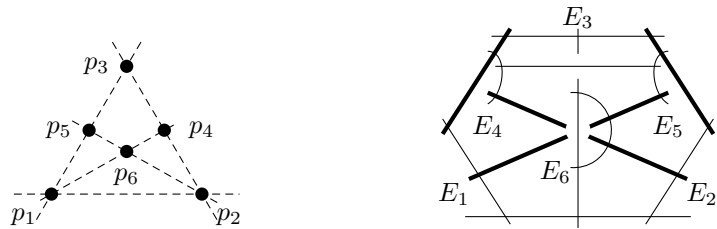
\begin{figure}[ht]
\begin{tikzpicture}[scale=1]

\begin{scope}[xshift=-3cm]
\node[circle,fill=black,inner sep=0pt,minimum size=5pt] at (0,0) {};
\node[below left=3pt of {(0,0)}] {$p_1$};

\node[circle,fill=black,inner sep=0pt,minimum size=5pt] at (2,0) {};
\node[below right=3pt of {(2,0)}] {$p_2$};

\node[circle,fill=black,inner sep=0pt,minimum size=5pt] at (1,1.7) {};
\node[left=3pt of {(1,1.7)}] {$p_3$};

\node[circle,fill=black,inner sep=0pt,minimum size=5pt] at (1.5,0.85) {};
\node[right=3pt of {(1.6,0.85)}] {$p_4$};

\node[circle,fill=black,inner sep=0pt,minimum size=5pt] at (0.5,0.85) {};
\node[left=3pt of {(0.5,0.85)}] {$p_5$};

\node[circle,fill=black,inner sep=0pt,minimum size=5pt] at (1,0.567) {};
\node[below=3pt of {(1,0.57)}] {$p_6$};

\draw[densely dashed,color=black] (-0.5,0)--(2.5,0);
\draw[densely dashed,color=black] (-0.2,-0.34)--(1.2,2.04);
\draw[densely dashed,color=black] (2.2,-0.34)--(0.8,2.04);
\draw[densely dashed,color=black] (-0.45*1/3,-0.25*1/3)--(1.5+0.45*1/3,0.85+0.25*1/3);
\draw[densely dashed,color=black] (0.5-0.45*1/2,0.85+0.25*1/2)--(2+0.45*1/2,-0.25*1/2);
\end{scope}

\begin{scope}[xshift=3cm,yshift=-0.3cm]
\draw (-0.5,0)--(2.5,0);
\draw (-0.15,2.4)--(2.15,2.4);

\draw (0.1,-0.1)--(-0.7-0.1,1.2+0.1);
\draw[ultra thick] (-0.7-0.1,1.2-0.1)--(0.1,2.4+0.1);
\draw (2-0.1,-0.1)--(2+0.7+0.1,1.2+0.1);
\draw[ultra thick] (2+0.7+0.1,1.2-0.1)--(2-0.1,2.4+0.1);

\draw[ultra thick] (-0.35+0.15,1.8-0.15)--(2.35+0.1,0.6-0.1);
\draw[ultra thick] (-0.35-0.1,0.6-0.1)--(2.35-0.15,1.8-0.15);
\node[circle,fill=white] at (1,1.15) {};
\draw (1,-0.15)--(1,2.4+0.1);
\draw (1-0.1,1.5+0.15) arc[start angle=95, end angle=-95, radius=0.5];

\node[circle,fill=white] at (1,2.0) {};
\draw (-0.1,2.0)--(2.1,2.0);
\draw (-0.2,2.2) to[out=0,in=30] (-0.2,1.5);
\draw (2.2,2.2) to[out=180,in=150] (2.2,1.5);

\node at (-0.65,0.3) {$E_1$};
\node at (2.65,0.3) {$E_2$};
\node at (1,2.65) {$E_3$};
\node at (-0.2,1.2) {$E_4$};
\node at (2.2,1.2) {$E_5$};
\node at (0.7,0.6) {$E_6$};
\end{scope}
\end{tikzpicture}
\vspace{-1em} 
\caption{Configuration of $9$ lines and $(-2)$-curves of type $4A_1$ (degree $3$)}
\vspace{-0.5em} 
\end{figure}

\noindent
Then $K_S=-3H+E_1+E_2+\cdots+E_6$. 
We define a set of total dual VMRTs on $\PP(T_S)$ as follows.

\begin{itemize}
\item
$\breve\Cc_i$ is the total dual VMRT associated to the family of strict transforms $\ell_i$ of lines passing through $p_i$ for $i=1,\,\ldots,\,6$.
\item
$\breve\Dd_{ijkl}$ is the total dual VMRT associated to the family of strict transforms $C_{ijkl}$ of conics passing through distinct $p_i$, $p_j$, $p_k$, and $p_l$.
\end{itemize}
Then
$[\ell_i]=H-E_i$ with no non-reduced curve, and
$[C_{ijkl}]=2H-E_i-E_j-E_k-E_l$ with the non-reduced curves $(H-E_{i'}-E_{j'}-E_{m'})+(H-E_{k'}-E_{l'}-E_{m'})+2E_{m'}$ where $\{i',j',k',l'\}=\{i,j,k,l\}$ and $p_{m'}$ is the point of intersection of lines respectively connecting $p_{i'},\,p_{j'}$ and $p_{k'},\,p_{l'}$ if exists.
So we have
\begin{align*}
\breve\Cc_i
&= \zeta+\Pi^*(K_S+2[\ell_i])\}
=\zeta-\Pi^*H+\Pi^*E_1+\cdots-\Pi^*E_i+\cdots+\Pi^*E_6,\\
\breve\Dd_{1236}&= \zeta+\Pi^*(K_S+2[C_{1236}])-\Pi^*E_5-\Pi^*E_6
=\zeta+\Pi^*H-\Pi^*E_1-\Pi^*E_2-\Pi^*E_3-\Pi^*E_6,\\
\breve\Dd_{1245}
&= \zeta+\Pi^*(K_S+2[C_{1245}])-\Pi^*E_3-\Pi^*E_6
=\zeta+\Pi^*H-\Pi^*E_1-\Pi^*E_2-\Pi^*E_4-\Pi^*E_5,\\
\breve\Dd_{3456}
&= \zeta+\Pi^*(K_S+2[C_{3456}])-\Pi^*E_1-\Pi^*E_2
=\zeta+\Pi^*H-\Pi^*E_3-\Pi^*E_4-\Pi^*E_5-\Pi^*E_6,
\end{align*}
which are summed up to $\sum_{i=1}^6 \breve\Cc_i+2\breve\Dd_{1236}+2\breve\Dd_{1245}+2\breve\Dd_{3456}=12\zeta$.
Thus $T_S$ is not big by Lemma~\ref{not_big_lemma}.
\end{proof}

\hypertarget{A_3+2A_1}{
\begin{Prop}\label{A_3+2A_1}
Let $S$ be a weak del Pezzo surface of degree $3$ of type $A_3+2A_1$.
Then $T_S$ is big.
\end{Prop}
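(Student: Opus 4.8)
The plan is to adapt the argument used for the degree-$4$ type $2A_1^{(9)}$ in Lemma~\ref{lem1_deg4}. That is, I will write down a finite list of effective divisors on $\PP(T_S)$ — total dual VMRTs coming from pencils of lines and of conics, together with pullbacks of certain negative curves — and then produce a positive linear relation between their classes and a multiple of $\zeta$, thereby forcing $\zeta$ into the interior of the pseudo-effective cone of $\PP(T_S)$. Since $\dim \PP(T_S)=3$ and $N^1(\PP(T_S))\cong\ZZ\zeta\oplus\Pi^*N^1(S)$ has rank $8$, the target is a relation as in \eqref{eq.2A19}, but now balancing a rank-$8$ space.

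First I would fix an explicit blow-up model. Matching the $A_3+2A_1$ sub-root-system of the $E_6$-lattice $K_S^{\perp}$ against effective $(-2)$-classes, one is led to blow up six bubble points with $p_4\succ p_3\succ p_2\succ p_1$ and $p_6\succ p_5$, all lying on a smooth conic $Q$. Then the three $(-2)$-curves $E_1,E_2,E_3$ coming from the length-$4$ chain form the $A_3$, while the $(-2)$-curve $E_5$ from $p_6\succ p_5$ and the strict transform of $Q$ give the two orthogonal $A_1$'s (with $E_4$ and $E_6$ the two $(-1)$-curves). This yields $K_S=-3H+E_1+2E_2+3E_3+4E_4+E_5+2E_6$ in the strict-transform basis, after which I would record the intersection data of all lines and $(-2)$-curves needed below.

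Next I would produce the effective divisors. By Proposition~\ref{fibration3}, each line $\ell$ on $S$ gives a conic pencil, and hence a total dual VMRT $\breve\Dd=\zeta+\Pi^*(K_S+2[C])$, corrected via Corollary~\ref{sing} for the conics degenerating through the infinitely near points and the $(-2)$-curves; similarly the pencils of lines through a point yield classes $\breve\Cc_i=\zeta+\Pi^*(K_S+2[\ell_i])$. To these I would adjoin the pullbacks $\Ee_k=\Pi^*(\,\cdot\,)$ of the five $(-2)$-curves and of a few lines, all of which are effective. The aim of this bookkeeping is a relation $\sum_i a_i\breve\Cc_i+\sum_j b_j\breve\Dd_j+\sum_k c_k\Ee_k=k\zeta+\Pi^*(\text{effective})$ with all coefficients strictly positive. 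Provided the participating classes span $N^1(\PP(T_S))$ with full rank $8$ and enter with positive coefficients, $k\zeta$ lies in the interior of their full-dimensional cone, and adding the effective pullback remainder keeps $k\zeta$ interior to the pseudo-effective cone; hence $\zeta$ is big.

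The main obstacle will be this last balancing. Unlike the degree-$4$ type $2A_1^{(9)}$, the large $A_3+2A_1$ configuration leaves relatively few lines, so the supply of conic pencils is limited, and I must verify both that enough of the resulting classes are independent to span the rank-$8$ lattice and that they combine with strictly positive coefficients into a multiple of $\zeta$ with no anti-effective remainder. The most delicate point is the Corollary~\ref{sing} correction for conics meeting the length-$4$ chain $p_4\succ p_3\succ p_2\succ p_1$: a miscount there flips the sign of a $\Pi^*E_i$ term and would instead yield a relation of the form $\sum\breve\Dd_j=k\zeta+\Pi^*(\text{nonzero effective})$, which by Lemma~\ref{not_big_lemma} proves the \emph{opposite} conclusion. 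Getting these corrections right is exactly what must separate this case (five $(-2)$-curves, big) from the sub-maximal non-big cases. Should a direct positive relation prove hard to balance, a fallback is to realize $S$ as a point blow-up of a degree-$4$ weak del Pezzo surface with big tangent bundle and transport effective divisors upward via Lemma~\ref{effetive_divisor_lemma}, then combine them with the pullbacks of the exceptional and $(-2)$-curves to reach the interior of the pseudo-effective cone.
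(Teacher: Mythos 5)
You have correctly identified the paper's strategy: its proof of Proposition~\ref{A_3+2A_1} runs exactly the scheme of Lemma~\ref{lem1_deg4}, taking the total dual VMRTs of the three line pencils through $p_1,p_2,p_3$, two conic pencils, the pullbacks of the three lines joining the $p_i$, applying the Corollary~\ref{sing} corrections for the non-reduced members, and verifying the exact identity
\[
\breve\Cc_1+\breve\Cc_2+4\breve\Cc_3+2\breve\Dd_{12}+2\breve\Dd_{3}+2\Ee_{12}+\Ee_{13}+\Ee_{23}+\Pi^*E_1+\Pi^*E_2+6\Pi^*E_3=10\zeta,
\]
where the eleven effective classes span the rank-$8$ lattice $N^1(\PP(T_S))$. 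But your proposal stops precisely where the mathematical content begins: you compute no VMRT class, determine no non-reduced member of any pencil in your model (the length-$4$ chain on a conic, a legitimate but different presentation from the paper's three first-order infinitely near points with two collinear triples), and you explicitly concede that the balancing might instead come out proving non-bigness. The entire force of this method lies in the verified identity; announcing the intention to find one, while admitting it may not exist, is not a proof.

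There is moreover a genuine logical error in the criterion you state, which would vitiate the conclusion even after the bookkeeping. You aim for a relation $\sum_i a_i\breve\Cc_i+\sum_j b_j\breve\Dd_j+\sum_k c_k\Ee_k=k\zeta+\Pi^*(\text{effective})$ and assert that ``adding the effective pullback remainder keeps $k\zeta$ interior.'' This is false: such a relation places $k\zeta+\Pi^*N$, not $k\zeta$, in the cone generated by your effective classes, and subtracting the effective class $\Pi^*N$ can exit the pseudo-effective cone. Relations of exactly this shape hold on the \emph{non-big} surfaces — for type $A_3+A_1$ one has $\breve\Cc+\breve\Dd=2\zeta+2\Pi^*E_3$ (Lemma~\ref{A_3+A_1}), and padding the left side with pullbacks of negative curves yields a positive, spanning relation of your form on a surface where $T_S$ is not big — so your criterion cannot distinguish the two cases. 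What the paper needs, and produces, is an identity in which the remainder is absorbed into the left-hand side with positive coefficients, so that $k\zeta$ itself is a strictly positive combination of effective classes spanning $N^1$; the terms $\Pi^*E_1,\Pi^*E_2,6\Pi^*E_3$ above are summands, not leftovers. Your fallback does not repair this: Lemma~\ref{effetive_divisor_lemma} transports effectivity up a blow-up only at the cost of an extra $\Pi^*E$, and bigness descends under blow-down (Lemma~\ref{domination}) but does not ascend — the big degree-$4$ surface of type $2A_1^{(9)}$ has degree-$3$ blow-ups of type $2A_1$, which are not big by Theorem~\ref{degree 3}(1) — so passing through a big degree-$4$ model returns you to the same unexecuted balancing problem.
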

}

\begin{proof}
The surface $S$ can be obtained by blowing up $6$ bubble points $p_1,\,\ldots,p_6$ in $\mathbb P^2$ such that
$p_4\succ p_1$, $p_5\succ p_3$, $p_6\succ p_3$, and $\{p_1,p_3,p_4\}$, $\{p_2,p_3,p_5\}$ are collinear.

\vspace{-1em} 
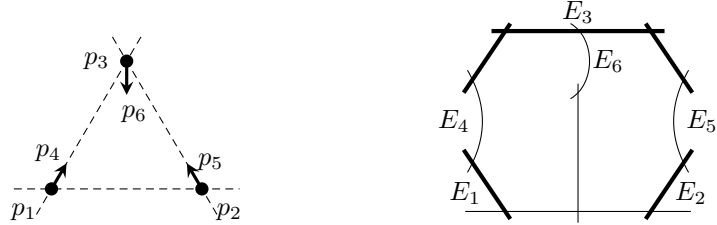
\begin{figure}[ht]
\begin{tikzpicture}[scale=1]
\begin{scope}[xshift=-3cm]
\node[circle,fill=black,inner sep=0pt,minimum size=5pt] at (0,0) {};
\node[below left=3pt of {(0,0)}] {$p_1$};
\node[above left=10pt of {(0.5,0)}] {$p_4$};
\node[circle,fill=black,inner sep=0pt,minimum size=5pt] at (2,0) {};
\node[below right=3pt of {(2,0)}] {$p_2$};
\node[above right=1pt of {(1.8,0.1)}] {$p_5$};
\node[circle,fill=black,inner sep=0pt,minimum size=5pt] at (1,1.7) {};
\node[left=3pt of {(1,1.7)}] {$p_3$};
\node[left=3pt of {(1.5,1)}] {$p_6$};
\draw[very thick,color=black,-stealth] (0,0)--(0.2,0.34);
\draw[very thick,color=black,-stealth] (2,0)--(1.8,0.34);
\draw[very thick,color=black,-stealth] (1,1.7)--(1,1.25);

\draw[densely dashed,color=black] (-0.5,0)--(2.5,0);
\draw[densely dashed,color=black] (-0.2,-0.34)--(1.2,2.04);
\draw[densely dashed,color=black] (2.2,-0.34)--(0.8,2.04);
\end{scope}

\begin{scope}[xshift=3cm,yshift=-0.3cm]
\draw (-0.5,0)--(2.5,0);
\draw[ultra thick] (-0.15,2.4)--(2.15,2.4);

\draw[ultra thick] (0.1,-0.1)--(-0.7-0.1 +0.28,1.2+0.1 -0.48);
\draw[ultra thick] (-0.7-0.1 +0.28,1.2-0.1 +0.48)--(0.1,2.4+0.1);
\draw[ultra thick] (2-0.1,-0.1)--(2+0.7+0.1 -0.28,1.2+0.1 -0.48);
\draw[ultra thick] (2+0.7+0.1 -0.28,1.2-0.1 +0.48)--(2-0.1,2.4+0.1);
\draw (-0.7-0.05 +0.28,1.2-0.2 -0.48) to[out=60,in=-60] (-0.7-0.05 +0.28,1.2+0.2 +0.48);
\draw (2+0.7+0.05 -0.28,1.2-0.2 -0.48) to[out=120,in=-120] (2+0.7+0.05 -0.28,1.2+0.2 +0.48);

\draw (1-0.1,2.4+0.1) to[out=-30,in=30] (1-0.1,1.6-0.1);
\draw (1,-0.15)--(1,1.7);

\node at (-0.5,0.25) {$E_1$};
\node at (2.5,0.25) {$E_2$};
\node at (1,2.65) {$E_3$};
\node at (-0.65,1.2) {$E_4$};
\node at (2.65,1.2) {$E_5$};
\node at (1.4,2) {$E_6$};
\end{scope}
\end{tikzpicture}
\vspace{-1em} 
\caption{Configuration of $5$ lines and $(-2)$-curves of type $A_3+2A_1$ (degree $3$)}
\vspace{-0.5em} 
\end{figure}

\noindent
Then $K_S=-3H+E_1+E_2+E_3+2E_4+2E_5+2E_6$. 
We define a set of effective divisors on $\PP(T_S)$ as follows.

\begin{itemize}
\item
$\breve\Cc_i$ is the total dual VMRT associated to the family of strict transforms $\ell_i$ of lines passing through $p_i$ for $i=1,\,2,\,3$.
\item
$\breve\Dd_{12}$ is the total dual VMRT associated to the family of strict transforms $C_{12}$ of conics passing through $p_1,\,p_4,\,p_2,\,p_5$.
\item
$\breve\Dd_{3}$ is the total dual VMRT associated to the strict transforms $C_3$ of conics passing through $p_1,\,p_2,\,p_3,\,p_6$.
\item
$\Ee_{ij}$ is the pull-back of the strict transform of the line connecting $p_i$ and $p_j$.
\end{itemize}
Then $[\ell_i]=H-E_i-E_{i+3}$ with no non-reduced curve except the case $i=3$ where the linear class has $(H-E_1-E_3-2E_4-E_6)+E_1+2E_4$ and $(H-E_2-E_3-2E_5-E_6)+E_2+2E_5$ as non-reduced curves, $[C_{12}]=2H-E_1-E_2-2E_4-2E_5$ with the non-reduced curves $2(H-E_1-E_2-E_4-E_5)+E_1+E_2$, $(H-E_1-E_3-E_4-E_6)+(H-E_2-E_3-E_5-E_6)+2E_5+2E_6$, and $[C_3]=2H-E_1-E_2-E_3-E_4-E_5-2E_6$ with no non-reduced curve.
So we have 
\begin{align*}
\breve\Cc_1
&=\zeta+\Pi^*(K_S+2[\ell_1])
=\zeta-\Pi^*H-\Pi^*E_1+\Pi^*E_2+\Pi^*E_3+2\Pi^*E_5+2\Pi^*E_6,\\
\breve\Cc_2
&= \zeta+\Pi^*(K_S+2[\ell_2])
=\zeta-\Pi^*H+\Pi^*E_1-\Pi^*E_2+\Pi^*E_3+2\Pi^*E_4+2\Pi^*E_6,\\
\breve\Cc_3
&= \zeta+\Pi^*(K_S+2[\ell_3])-\Pi^*E_4-\Pi^*E_5
=\zeta-\Pi^*H+\Pi^*E_1+\Pi^*E_2-\Pi^*E_3+\Pi^*E_4+\Pi^*E_5,\\
\breve\Dd_{12}
&= \zeta+\Pi^*(K_S+2[C_{12}])
-\Pi^*(H-E_1-E_2-E_4-E_5)-\Pi^*E_5-\Pi^*E_6
=\zeta-\Pi^*E_4-\Pi^*E_5+\Pi^*E_6,\\
\breve\Dd_3
&=\zeta+\Pi^*(K_S+2[C_3])
=\zeta+\Pi^*H-\Pi^*E_1-\Pi^*E_2-\Pi^*E_3-2\Pi^*E_6,\\
\Ee_{12}
&= \Pi^*H-\Pi^*E_1-\Pi^*E_2-\Pi^*E_4-\Pi^*E_5,\\
\Ee_{13}
&= \Pi^*H-\Pi^*E_1-\Pi^*E_3-2\Pi^*E_4-\Pi^*E_6,\\
\Ee_{23}
&= \Pi^*H-\Pi^*E_2-\Pi^*E_3-2\Pi^*E_5-\Pi^*E_6,
\end{align*}
which yields the following equality.
\begin{align*}
\breve\Cc_1+\breve\Cc_2+4\breve\Cc_3+2\breve\Dd_{12}+2\breve\Dd_{3}+2\Ee_{12}+\Ee_{13}+\Ee_{23}+\Pi^*E_1+\Pi^*E_2+6\Pi^*E_3
&=10\zeta
\end{align*}
By the same argument as in the proof of Lemma~\ref{lem1_deg4}, we can show that $T_S$ is big.
\end{proof}

\begin{Lem}\label{big_anticanonical_model}
Let $S$ be a weak del Pezzo surface and $D$ be the union of $(-2)$-curves on $S$.
Assume that $m\zeta-\Pi^*D$ is effective on $\PP(T_S)$ for some $m>0$.
If the orbifold $\Zz\to Z$ associated to the anticanonical model $S\to Z$ of $S$ has a positive second Segre class $c_1(\Zz)^2-c_2(\Zz)>0$, then the tangent bundle $T_S$ of $S$ is big.
\end{Lem}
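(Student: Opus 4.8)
The plan is to reduce the bigness of $T_S$ to the growth of $h^0(S,\Sym^m T_S)=h^0(\PP(T_S),m\zeta)$, and then to feed this group with orbifold symmetric tensors coming from $\Zz$ after clearing their poles along $D$. Recall that $T_S$ is big precisely when $\mathrm{vol}(\zeta)=\limsup_m \frac{3!\,h^0(\PP(T_S),m\zeta)}{m^3}>0$, and that a lower bound for this $\limsup$ along any subsequence of exponents suffices. Since the anticanonical model $\mu\colon S\to Z$ contracts exactly the union $D$ of $(-2)$-curves to Du Val singularities, $\Zz$ is a smooth Deligne--Mumford surface with coarse space $Z$, with $A:=-K_\Zz$ ample and $K_\Zz^2=K_Z^2=\deg S=:d$. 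I would first establish cubic growth $h^0(\Zz,\Sym^m T_\Zz)\gtrsim m^3$ on the orbifold, and then transport these sections to $S$.

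On the orbifold side, the Grothendieck relation on $\PP(T_\Zz)$ gives $\zeta_\Zz^3=c_1(\Zz)^2-c_2(\Zz)$, so by orbifold Hirzebruch--Riemann--Roch one has $\chi(\Zz,\Sym^m T_\Zz)=\tfrac{c_1(\Zz)^2-c_2(\Zz)}{6}\,m^3+O(m^2)$, whose leading coefficient is positive by hypothesis. To pass from $\chi$ to $h^0$, I would annihilate the higher cohomology: $h^3=0$ since $\dim\Zz=2$, while Serre duality together with the rank-two identity $\Omega_\Zz\cong T_\Zz\otimes K_\Zz$ yields $h^2(\Zz,\Sym^m T_\Zz)=h^0(\Zz,\Sym^m T_\Zz\otimes K_\Zz^{\otimes(m+1)})=h^0(\PP(T_\Zz),\,m(\zeta_\Zz-\Pi_\Zz^{*}A)+\Pi_\Zz^{*}K_\Zz)$. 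Here $\zeta_\Zz-\Pi_\Zz^{*}A$ is the tautological class of $\Omega_\Zz$. Since $Z$ is a del Pezzo surface, $\Zz$ is a Fano orbifold and hence rationally connected, so its cotangent bundle $\Omega_\Zz$ is not pseudo-effective, i.e.\ $\zeta_\Zz-\Pi_\Zz^{*}A$ is not pseudo-effective. As $\Pi_\Zz^{*}K_\Zz$ is fixed, $m(\zeta_\Zz-\Pi_\Zz^{*}A)+\Pi_\Zz^{*}K_\Zz$ fails to be pseudo-effective for $m\gg0$, whence $h^2=0$. Therefore $h^0(\Zz,\Sym^m T_\Zz)\ge\chi(\Zz,\Sym^m T_\Zz)\gtrsim m^3$.

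To transfer these sections to $S$, I would argue locally near a singular point, where $Z\cong\CC^2/G$ with $G\subset\mathrm{SL}_2(\CC)$ finite and $\mu$ the minimal resolution. A section of $\Sym^m T_\Zz$ is, near such a point, a $G$-invariant holomorphic symmetric $m$-tensor on $\CC^2$; its pullback to $S$ is a symmetric $m$-tensor with poles of order at most $c\,m$ along $D$, for a constant $c$ independent of $m$. This yields an injection $H^0(\Zz,\Sym^m T_\Zz)\hookrightarrow H^0\!\big(S,\Sym^m T_S\otimes\Oo_S(\lceil cm\rceil D)\big)$. The hypothesis supplies a nonzero $\sigma_0\in H^0(S,\Sym^{m_0}T_S\otimes\Oo_S(-D))$, coming from the assumed effective divisor in $|m_0\zeta-\Pi^{*}D|$ (I write $m_0$ for the $m$ in the statement), and $\sigma_0$ vanishes along $D$. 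Multiplying by $\sigma_0^{\lceil cm\rceil}$ through the algebra structure $\Sym^a T_S\otimes\Sym^b T_S\to\Sym^{a+b}T_S$ clears all poles and lands in $H^0(S,\Sym^{k_m}T_S)$ with $k_m=m+\lceil cm\rceil m_0\le c'm$ linear in $m$. This multiplication is injective because $\bigoplus_{\bullet}\Sym^{\bullet}T_S$ is an integral domain and $\sigma_0\neq0$.

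Combining the two steps, $h^0(S,\Sym^{k_m}T_S)\ge h^0(\Zz,\Sym^m T_\Zz)\gtrsim m^3\gtrsim k_m^3$ along the linear sequence $k_m$, so $\mathrm{vol}(\zeta)>0$ and $T_S$ is big. The main obstacle I expect lies in the transfer step: establishing, uniformly in $m$, the linear pole bound for the pullback of $G$-invariant symmetric tensors along each $(-2)$-curve, and checking that this pole order is exactly absorbed by the order of vanishing of $\sigma_0^{\lceil cm\rceil}$ along $D$. By contrast, the cohomological input, namely orbifold Riemann--Roch and the vanishing $h^2=0$ derived from the non-pseudo-effectivity of $\Omega_\Zz$, should be comparatively routine once the orbifold formalism is in place.
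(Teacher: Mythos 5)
Your proposal has the same overall architecture as the paper's proof --- orbifold Riemann--Roch on $\Zz$ to get $h^0(\Zz,\Sym^m T_\Zz)\gtrsim m^3$, transfer of these sections to $S$ with controlled poles along $D$, and absorption of the poles by powers of the section $\sigma_0$ corresponding to the effective class $m_0\zeta-\Pi^*D$ --- but you implement the two technical sub-steps differently, and both of your replacements are left at sketch level. For the cohomology step, the paper never proves $h^2=0$: it uses that $-K_\Zz$ is an integral Weil and $\QQ$-Cartier divisor to get the one-line bound $h^2(\Zz,\Sym^m T_\Zz)=h^0(\Zz,\Sym^m T_\Zz\otimes\Oo_\Zz((m+1)K_\Zz))\le h^0(\Zz,\Sym^m T_\Zz)$ (multiply by a pluri-anticanonical section), whence $2h^0\ge\chi\sim\frac{1}{6}(c_1(\Zz)^2-c_2(\Zz))m^3$, which already suffices. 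Your route via ``$\Zz$ is a Fano orbifold, hence rationally connected, hence $\Omega_\Zz$ is not pseudo-effective'' is the least secure part of your write-up: in the singular/stacky setting you need very free rational curves contained in the smooth locus of $Z$ (rational connectedness of the smooth locus of a log del Pezzo surface is a theorem of Keel--McKernan, not a formal consequence of ``Fano''), together with the covering-family restriction argument to rule out pseudo-effectivity; this can be made to work, but it is genuinely heavier than what is needed, and I would recommend replacing it by the inequality $h^2\le h^0$ above. For the transfer step, where you propose a local invariant-theoretic pole bound of order $cm$ (the uniformity in $m$ does hold, via finite generation of the relevant invariant algebra at each Du Val point --- you should say this explicitly, since you flag it as the main obstacle without closing it), the paper instead obtains the sharp structural statement with $c=1$: using the residue sequence $\bigwedge^2\Omega_S(\log D)=\Oo_S(K_S+D)$ and the extension property of symmetric differentials over quotient singularities, it identifies $H^0(\Zz,\Sym^m T_\Zz)\cong H^0(S,\Sym^m[T_S(-\log D)\otimes\Oo_S(D)])$, and then the hypothesis yields the sheaf inclusion $\Sym^k[T_S\otimes\Oo_S(D)]\hookrightarrow\Sym^{(m_0+1)k}T_S$ --- which is exactly your multiplication-by-$\sigma_0^{\lceil cm\rceil}$ device, phrased as divisor arithmetic on $\PP(T_S)$. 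In summary: same skeleton, interchangeable technical implementations; your version costs Keel--McKernan plus an unproved uniform pole bound, while the paper's log-geometric identification gives both for free. Neither gap is fatal, but as written the $h^2$ step and the pole bound are the two places your argument is not yet a proof.
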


\begin{proof}
First, we can observe that $\Sym^k[T_S\otimes \Oo_S(D)]$ is a subbundle of $\Sym^{(m+1)k} T_S$;
since $k(m\zeta-\Pi^*D)$ is effective, there exists the following exact sequence on $\PP(T_S)$.
\[
0
\to \Oo_{\PP(T_S)}(k(\zeta+\Pi^*D))
\to \Oo_{\PP(T_S)}((m+1)k\zeta)
\to \cdots
\]
By pushing forward the exact sequence, we obtain the following exact sequence on $S$.
\[
0
\to \Sym^k [T_S\otimes\Oo_S(D)]
\to \Sym^{(m+1)k} T_S
\to \cdots
\]

Moreover, we can observe that $\Sym^k[T_S(-\log D)\otimes \Oo_S(D)]$ is a subsheaf of $\Sym^{(m+1)k} T_S$; for the $(-2)$-curves $E_i$ on $S$ for $i=1,\,\ldots,\,l$, let $D=\sum_{i=1}^l E_i$.
Then there exists the exact sequence
\[
0
\to T_S(-\log D)
\to T_S
\to \Oo_{E_1}(E_1)\oplus\cdots\oplus\Oo_{E_l}(E_l)
\to 0
\]
involving the logarithmic tangent sheaf $T_S(-\log D)$.
By twisting $\Oo_S(D)$ and taking symmetric power to the exact sequence, we obtain the following exact sequence on $S$.
\[
0
\to \Sym^k [T_S(-\log D)\otimes \Oo_S(D)]
\to \Sym^k [T_S\otimes \Oo_S(D)]
\to \mathcal{Q}
\to 0
\]
As $\Sym^k [T_S\otimes \Oo_S(D)]$ is a subsheaf of $\Sym^{(m+1)k} T_S$, $\Sym^k [T_S(-\log D)\otimes \Oo_S(D)]$ is also a subsheaf of $\Sym^{(m+1)k} T_S$.
Thus $h^0(S,\Sym^k[T_S(-\log D)\otimes \Oo_S(D)])\sim k^3$ implies $h^0(S,\Sym^{(m+1)k}T_S)\sim k^3$.
In other words, if the vector bundle $T_S(-\log D)\otimes \Oo_S(D)$ is big, then $T_S$ is big.

By considering the mixed Hodge structure on $S\setminus D$, we have the Poincar\'e residue exact sequence
\[
0
\to \bigwedge^2\Omega_S
\to \bigwedge^2\Omega_S(\log D)
\xrightarrow{\rm res} \Omega_D\to 0.
\]
Since $D$ is a simple normal crossing divisor, $\Omega_D=\Oo_D(K_S+D)$.
Therefore, $\wedge^2\Omega_S(\log D)=K_S+D$, and it implies that $T_S(-\log D)\otimes \Oo_S(D)=\Omega_S(\log D)\otimes \Oo_S(-K_S)$ and
\[
H^0(S\setminus D,\Sym^m \Omega_S)
\cong H^0(S,\Sym^m \Omega_S(\log D))
\]
due to the extension property.
By twisting $\Oo_S(-mK_S)$, we have
\[
H^0(S\setminus D,\Sym^m T_S)
\cong H^0(S, \Sym^m [T_S(-\log D)\otimes \Oo_S(D)]).
\]
On the other hand, we have
\[
H^0(S\setminus D,\Sym^m \Omega_S)\cong H^0(\mathcal{Z}, \Sym^m \Omega_{\mathcal{Z}}).
\]
Thus
\begin{align*}
h^0(S\setminus D,\Sym^m T_S)
&=h^0(S\setminus D,\Sym^m \Omega_S\otimes \Oo_S(-mK_S))\\
&= h^0(\mathcal{Z}, \Sym^m\Omega_Z\otimes \Oo_\mathcal{Z}(-mK_{\mathcal{Z}}))
=h^0(\mathcal{Z}, \Sym^m T_\mathcal{Z}).
\end{align*}
Since $-K_{\mathcal{Z}}$ is an integral Weil and $\QQ$-Cartier divisor, we can deduce that
\[
h^2(\mathcal{Z},\Sym^m T_{\mathcal{Z}})
=h^0(\mathcal{Z},\Sym^m T_{\mathcal{Z}}\otimes \Oo_\mathcal{Z}((m+1)K_\mathcal{Z}))
\leq h^0(\mathcal{Z},\Sym^m T_{\mathcal{Z}}).
\]
Therefore, $T_\mathcal{Z}$ is big if and only if the orbifold Chern classes satisfy $c_1(\Zz)^2-c_2(\Zz)>0$ because
\[
\chi(\mathcal{Z},\Sym^m T_{\mathcal{Z}})
=\frac{1}{6}(c_1(\Zz)^2-c_2(\Zz))\,m^3+O(m^2),
\]
and it implies not only the bigness of $T_S(-\log D)\otimes \Oo_S(D)$ but also the bigness of $T_S$.
\end{proof}

\hypertarget{E_6}{
\begin{Prop}\label{E_6}
Let $S$ be a weak del Pezzo surface of degree $3$ of type $E_6$.
Then $T_S$ is big.
\end{Prop}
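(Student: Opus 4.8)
The plan is to apply Lemma~\ref{big_anticanonical_model}. Realize $S$ as the blow-up of $\PP^2$ at six (infinitely near) bubble points whose six $(-2)$-curves form an $E_6$-configuration, and let $D$ be their sum. The anticanonical morphism $S\to Z$ contracts exactly $D$ (the curves with $-K_S$-degree $0$) to a single point, so $Z$ is the cubic surface with one $E_6$-singularity, with associated orbifold $\Zz\to Z$. This cubic carries only one line, so the method of the earlier lemmas---producing enough total dual VMRTs to push $\zeta$ into the interior of the pseudo-effective cone of $\PP(T_S)$---is unavailable, which is precisely the reason for passing to the anticanonical orbifold. It then remains to verify the two hypotheses of Lemma~\ref{big_anticanonical_model}: that $m\zeta-\Pi^*D$ is effective for some $m>0$, and that $c_1(\Zz)^2-c_2(\Zz)>0$.

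For the effectivity hypothesis I would exploit the conic fibration furnished by Proposition~\ref{fibration3} from the unique line $\ell$ (in the form $|-K_S-\ell-E|$, with $E$ the sum of $(-2)$-curves met by $\ell$), together with the families of conics through quadruples of the base points. Writing the associated total dual VMRTs as $\breve\Cc_i=\zeta+\Pi^*A_i$ via Serrano's formula and Corollary~\ref{sing}, the aim is to find nonnegative coefficients with $\sum_i a_i=m$ for which $-\sum_i a_iA_i-D$ is effective on $S$; then
\[ m\zeta-\Pi^*D=\sum_i a_i\breve\Cc_i+\Pi^*\Bigl(-\sum_i a_iA_i-D\Bigr) \]
exhibits $m\zeta-\Pi^*D$ as a sum of effective divisors. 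I expect this to be the main obstacle: the scarcity of lines forces reliance on the conic families, and one must keep careful track of the non-reduced members contracted over the singular fibres, exactly as in the degree-$3$ lemmas above.

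The second hypothesis is a Chern-number computation on the orbifold. Since the $E_6$-singularity is Du Val, the minimal resolution $S\to Z$ is crepant, so $c_1(\Zz)^2=K_Z^2=K_S^2=3$. For the orbifold second Chern number I would use the orbifold Euler characteristic
\[ c_2(\Zz)=\chi_{\mathrm{top}}(Z\setminus\{p\})+\tfrac{1}{|G|}, \]
where $G$ is the local fundamental group of the $E_6$-singularity, namely the binary tetrahedral group with $|G|=24$. As the six $(-2)$-curves form a tree with five edges, their union has Euler number $12-5=7$, whence $\chi_{\mathrm{top}}(Z)=\chi_{\mathrm{top}}(S)-7+1=9-6=3$ and $\chi_{\mathrm{top}}(Z\setminus\{p\})=2$. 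Therefore $c_2(\Zz)=2+\tfrac{1}{24}=\tfrac{49}{24}$ and
\[ c_1(\Zz)^2-c_2(\Zz)=3-\tfrac{49}{24}=\tfrac{23}{24}>0. \]

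With both hypotheses in hand, Lemma~\ref{big_anticanonical_model} yields that $T_S$ is big. The narrowly positive value $\tfrac{23}{24}$ reflects the delicacy seen throughout: the same computation returns $3-9<0$ for the smooth cubic surface, and in general $c_1(\Zz)^2-c_2(\Zz)=\sum_i n_i+s-6-\sum_i\tfrac{1}{|G_i|}$, where the $(-2)$-curves split into $s$ singular points of types with $n_i$ curves and local groups $G_i$; this quantity is positive precisely in the maximal cases $\sum_i n_i=6$, consistent with the other maximal degree-$3$ types also being big.
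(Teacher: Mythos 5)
Your overall strategy coincides with the paper's: reduce to Lemma~\ref{big_anticanonical_model} and verify its two hypotheses. Your orbifold computation is correct and complete, and it is exactly the paper's: $c_1(\Zz)^2=K_S^2=3$ by crepancy of the minimal resolution, and $c_2(\Zz)=\chi_{\mathrm{top}}(Z\setminus\{p\})+\tfrac{1}{24}=\tfrac{49}{24}$ via the binary tetrahedral group of order $24$, giving $c_1(\Zz)^2-c_2(\Zz)=\tfrac{23}{24}>0$ (the paper simply cites \cite[Proposition~13]{RR13} for this). The genuine gap is the first hypothesis, which you explicitly defer as ``the main obstacle'': you never produce an identity exhibiting $m\zeta-\Pi^*D$ as effective, so the proof is incomplete precisely at its hardest point. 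Moreover, the families you propose to feed into the sought identity are not the right ones and are not shown to be adequate: the conic pencil $|-K_S-\ell-E|$ built from the unique line $\ell$, and ``conics through quadruples of the base points,'' are problematic here because five of the six points are infinitely near in a chain over $p_1$, so it is unclear which quadruples even give pencils of conics, and you would still have to carry out the bookkeeping to find nonnegative $a_i$ with $-\sum_i a_iA_i-D$ effective.

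The paper fills this gap with a single, simpler family: the pencil of lines through the closed point $p_1$, whose general member has class $[\ell]=H-E_1-\cdots-E_6$ in strict-transform notation (the entire chain lies over $p_1$). Its unique degenerate member is the line $E'$ through the collinear triple $\{p_1,p_2,p_3\}$ together with exceptional curves, namely $(H-E_1-2E_2-3E_3-\cdots-3E_6)+E_2+2E_3+\cdots+2E_6$, and Corollary~\ref{sing} corrects the total dual VMRT class accordingly, yielding $\breve\Cc=\zeta-\Pi^*H-\Pi^*E_1+\Pi^*E_4+2\Pi^*E_5+3\Pi^*E_6$ and then
\[
\zeta-\Pi^*D=\breve\Cc+\Pi^*\left(E_1+E_2+2E_3+E_4\right),
\]
so effectivity already holds with $m=1$; no combination of several families is needed. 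One peripheral inaccuracy in your closing remark: the claim that $c_1(\Zz)^2-c_2(\Zz)>0$ holds ``precisely'' in the maximal cases $\sum_i n_i=6$ is false by your own formula --- for type $A_3+2A_1$ of degree $3$ one gets $\sum_i n_i+s-6-\sum_i\tfrac{1}{|G_i|}=5+3-6-\tfrac{5}{4}=\tfrac{3}{4}>0$ --- so positivity of the orbifold Segre number does not single out the maximal configurations; it is the effectivity hypothesis of Lemma~\ref{big_anticanonical_model} that can fail.
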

}

\begin{proof}
The surface $S$ can be obtained by blowing up $6$ bubble points $p_1,\,\ldots,p_6$ in $\mathbb P^2$ such that
$p_6\succ p_5\succ \cdots p_1$ and $\{p_1,p_2,p_3\}$ is collinear.
\vspace{-1em}

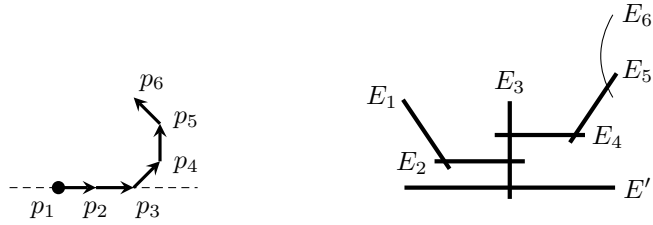
\begin{figure}[h]
\begin{tikzpicture}[scale=1]

\begin{scope}[xshift=-2.5cm]
\draw[densely dashed] (-0.65,0)--(1.85,0);

\node[circle,fill=black,inner sep=0pt,minimum size=5pt] at (0,0) {};
\node at (-0.2,-0.3) {$p_1$};
\node at (0.5,-0.3) {$p_2$};
\node at (1.2,-0.3) {$p_3$};
\node at (1.7,0.3) {$p_4$};
\node at (1.7,0.9) {$p_5$};
\node at (1.25,1.4) {$p_6$};

\draw[very thick,color=black,-stealth] (0,0)--(0.5,0.0);

\draw[very thick,color=black,-stealth] (0.5,0)--(1,0.0);

\draw[very thick,color=black,-stealth] (1,0)--(1.35,0.35);

\draw[very thick,color=black,-stealth] (1.35,0.35)--(1.35,0.85);

\draw[very thick,color=black,-stealth] (1.35,0.85)--(1,1.2);
\end{scope}

\begin{scope}[xshift=2.5cm,yshift=0cm]
\draw[ultra thick] (-0.4,0)--(2.4,0);
\draw[ultra thick] (0,0.35)--(1.2,0.35);
\draw[ultra thick] (0.8,0.7)--(2,0.7);
\draw[ultra thick] (0.2,0.25)--(-0.6-0.1 +0.28,1.55+0.1 -0.48);
\draw[ultra thick] (1.8,0.6)--(2.6+0.1 -0.28,1.9+0.1 -0.48);
\draw[ultra thick] (1,-0.15)--(1,1.15);
\draw (2+0.6+0.05 -0.28,1.9-0.7) to[out=120,in=-120] (2+0.6+0.05 -0.28,1.9+0.4);

\node at (-0.7,1.2) {$E_1$};
\node at (-0.3,0.33) {$E_2$};
\node at (1,1.4) {$E_3$};
\node at (2.3,0.68) {$E_4$};
\node at (2.7,1.55) {$E_5$};
\node at (2.7,2.3) {$E_6$};
\node at (2.7,0) {$E'$};
\end{scope}

\end{tikzpicture}
\vspace{-1em} 
\caption{Configuration of $1$ line and $(-2)$-curves of type $E_6$ (degree $3$)}
\vspace{-1em}
\end{figure}

\noindent
Then $K_S=-3H+E_1+E_2+E_3+2E_4+2E_5+2E_6$. 
We define a set of effective divisors on $\PP(T_S)$ as follows.

\begin{itemize}
\item
$\breve\Cc$ is the total dual VMRT associated to the family of strict transforms $\ell$ of lines passing through $p_1$.
\end{itemize}
Then $[\ell]=H-E_1-E_2-\cdots-E_6$ with the non-reduced curve $(H-E_1-2E_2-3E_3-\cdots-3E_6)+E_2+2E_3+\cdots+2E_6$.
So we have
\begin{align*}
\breve\Cc
&=\zeta+\Pi^*(K_S+2[\ell])-\Pi^*E_3-\cdots-\Pi^*E_6
=\zeta-\Pi^*H-\Pi^*E_1+\Pi^*E_4+2\Pi^*E_5+3\Pi^*E_6.
\end{align*}
Note that
\[
\breve\Cc+\Pi^*E_1+\cdots+\Pi^*E_5+\Pi^*E'
=\zeta-\Pi^*E_1-\Pi^*E_2-2\Pi^*E_3-\Pi^*E_4
\]
where $E'$ is the strict transform of the line passing through $p_1$, $p_2$, and $p_3$.
In particular, $\zeta-\Pi^*D$ is effective on $\PP(T_X)$ where $D=E_1+\cdots+E_5+E'$ is the union of $(-2)$-curves on $S$.
By the formula \cite[Proposition 13]{RR13},
\begin{align*}
c_1(\Zz)^2
&=c_1(S)^2
=3,\\
c_2(\Zz)\ 
&=c_2(S)-(6+1)\cdot 1 + \frac{1}{24}
=\frac{49}{24},
\end{align*}
so we have the desired claim due to Lemma \ref{big_anticanonical_model}.
\end{proof}

The following proposition and the previous remark show that $T_S$ is not necessary to be big even if $S$ has the maximum number of $(-2)$-curves when the degree is $d=2$ or $1$.

\begin{Prop}\label{2A_3+A_1}
Let $S$ be a weak del Pezzo surface of degree $2$ of type $2A_3+A_1$. Then $T_S$ is not big.
\end{Prop}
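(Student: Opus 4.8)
The plan is to apply Lemma~\ref{not_big_lemma} by exhibiting finitely many divisorial total dual VMRTs on $\PP(T_S)$ whose sum has the form $k\zeta+\Pi^*D$ with $k>0$ and $D$ an effective divisor. First I would fix an explicit realization of $S$ as the blow-up $\Bl_{p_1,\ldots,p_7}\PP^2$ at seven bubble points, prescribing the infinitely near structure and the collinearity relations so that the $(-2)$-curves form exactly the configuration $2A_3+A_1$; here the $A_3$-chains arise from classes $E_i-E_j$ attached to infinitely near points together with a class $H-E_a-E_b-E_c$ attached to a collinear triple. Note that $2A_3+A_1$ realizes the maximal number $9-d=7$ of $(-2)$-curves in degree $d=2$, so this proposition genuinely lies outside the range of Theorem~\ref{degree 3}(1). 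With this model $K_S=-3H+\sum_{i=1}^7 E_i$.

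Next I would produce the VMRTs from conic pencils on $S$. On a degree-$2$ surface both the pencils of lines through a blown-up point (fibre class $H-E_i$) and the pencils of conics through four of the points (fibre class $2H-E_{i_1}-E_{i_2}-E_{i_3}-E_{i_4}$) have self-intersection $0$ and anticanonical degree $2$, hence define conic fibrations $S\to\PP^1$. For each such family the total dual VMRT is computed by the formula of Section~2, namely $\breve\Cc=\zeta+\Pi^*(K_S+2F)-\Pi^*D_0$, where $F$ is the fibre class and the correction $\Pi^*D_0$ records the non-reduced fibres via Corollary~\ref{sing}. I would exploit the involution of the configuration interchanging the two $A_3$ blocks: choosing a pair, or a small symmetric collection, of conic pencils swapped by this involution should cause the non-$\zeta$ contributions to combine so that the $H$-coefficient and the cross terms cancel, leaving $k\zeta+\Pi^*D$ with $D$ supported on the $(-2)$-curves and the symmetry-fixed locus.

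Finally, after verifying that the residual divisor $D$ is effective, Lemma~\ref{not_big_lemma} immediately yields that $T_S$ is not big. The main obstacle I anticipate is the bookkeeping of the singular fibres of the chosen pencils: because their base points interact with the $A_3$-chains and the $A_1$ node, several fibres are reducible and non-reduced, and I must track precisely which $p_i$ lie at singular points of singular fibres in order to apply the corrections of Corollary~\ref{sing} correctly. Getting these multiplicities right, while simultaneously tuning the pencils and their coefficients so that the leftover $\Pi^*D$ is effective rather than merely proportional to $\zeta$, is the delicate part; once a symmetric choice is found, the concluding step is routine.
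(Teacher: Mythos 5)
Your proposal is correct and follows essentially the same route as the paper: the paper realizes $S$ as the blow-up of $\PP^2$ at seven bubble points with $p_4\succ p_1$, $p_5\succ p_2$, $p_6\succ p_3$ and collinear triples $\{p_1,p_3,p_4\}$, $\{p_2,p_3,p_5\}$, $\{p_3,p_6,p_7\}$, $\{p_1,p_2,p_7\}$, and applies Lemma~\ref{not_big_lemma} to exactly the kind of combination you describe --- the total dual VMRTs of the line pencil through $p_7$ and of the conic pencils through $p_1,p_2,p_4,p_5$ and through $p_1,p_2,p_3,p_6$, with non-reduced-fibre corrections from Corollary~\ref{sing}, summing to $3\zeta+\Pi^*E_4+\Pi^*E_5$. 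The only cosmetic differences are that you work with total-transform classes (so your $K_S=-3H+\sum_{i=1}^{7}E_i$ is the paper's strict-transform formula $K_S=-3H+E_1+E_2+E_3+2E_4+2E_5+2E_6+E_7$ in another basis) and that the paper's three pencils are each individually invariant under the involution exchanging the two $A_3$-blocks rather than swapped in pairs; the multiplicity bookkeeping you flag as the delicate step is precisely what the paper's displayed computation carries out.
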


\begin{proof}
The surface $S$ can be obtained by blowing up $7$ bubble points $p_1,\,\ldots,p_7$ in $\mathbb P^2$ such that
$p_4\succ p_1$, $p_5\succ p_2$, $p_6\succ p_3$, and $\{p_1,\,p_3,\,p_4\}$, $\{p_2,\,p_3,\,p_5\}$, $\{p_3,\,p_6,\,p_7\}$, $\{p_1,\,p_2,\,p_7\}$ are collinear.

\vspace{-0.5em} 
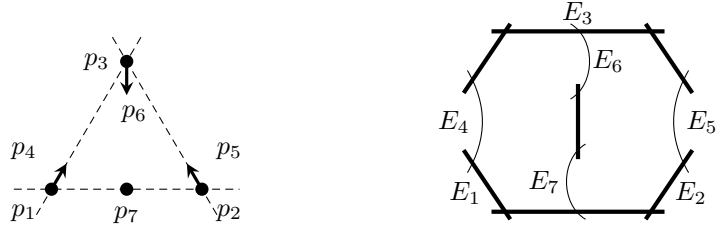
\begin{figure}[ht]
\begin{tikzpicture}[scale=1]
\begin{scope}[xshift=-3cm]
\node[circle,fill=black,inner sep=0pt,minimum size=5pt] at (0,0) {};
\node[below left=3pt of {(0,0)}] {$p_1$};
\node[above left=3pt of {(0,0.2)}] {$p_4$};
\node[circle,fill=black,inner sep=0pt,minimum size=5pt] at (2,0) {};
\node[below right=3pt of {(2,0)}] {$p_2$};
\node[above right=3pt of {(2,0.2)}] {$p_5$};
\node[circle,fill=black,inner sep=0pt,minimum size=5pt] at (1,1.7) {};
\node[left=3pt of {(1,1.7)}] {$p_3$};
\node[left=3pt of {(1.5,1)}] {$p_6$};
\node[circle,fill=black,inner sep=0pt,minimum size=5pt] at (1,0) {};
\node[below=3pt of {(1,0)}] {$p_7$};
\draw[very thick,color=black,-stealth] (0,0)--(0.2,0.34);
\draw[very thick,color=black,-stealth] (2,0)--(1.8,0.34);
\draw[very thick,color=black,-stealth] (1,1.7)--(1,1.25);

\draw[densely dashed,color=black] (-0.5,0)--(2.5,0);
\draw[densely dashed,color=black] (-0.2,-0.34)--(1.2,2.04);
\draw[densely dashed,color=black] (2.2,-0.34)--(0.8,2.04);
\end{scope}

\begin{scope}[xshift=3cm,yshift=-0.3cm]
\draw[ultra thick] (-0.15,0)--(2.15,0);
\draw[ultra thick] (-0.15,2.4)--(2.15,2.4);

\draw[ultra thick] (0.1,-0.1)--(-0.7-0.1 +0.28,1.2+0.1 -0.48);
\draw[ultra thick] (-0.7-0.1 +0.28,1.2-0.1 +0.48)--(0.1,2.4+0.1);
\draw[ultra thick] (2-0.1,-0.1)--(2+0.7+0.1 -0.28,1.2+0.1 -0.48);
\draw[ultra thick] (2+0.7+0.1 -0.28,1.2-0.1 +0.48)--(2-0.1,2.4+0.1);
\draw (-0.7-0.05 +0.28,1.2-0.2 -0.48) to[out=60,in=-60] (-0.7-0.05 +0.28,1.2+0.2 +0.48);
\draw (2+0.7+0.05 -0.28,1.2-0.2 -0.48) to[out=120,in=-120] (2+0.7+0.05 -0.28,1.2+0.2 +0.48);

\draw (1-0.1,2.4+0.1) to[out=-30,in=30] (1-0.1,1.6-0.1);
\draw (1+0.1,0-0.1) to[out=150,in=-150] (1+0.1,0.8+0.1);
\draw[ultra thick] (1,0.7)--(1,1.7);

\node at (-0.5,0.25) {$E_1$};
\node at (2.5,0.25) {$E_2$};
\node at (1,2.65) {$E_3$};
\node at (-0.65,1.2) {$E_4$};
\node at (2.65,1.2) {$E_5$};
\node at (1.4,2) {$E_6$};
\node at (0.55,0.4) {$E_7$};
\end{scope}
\end{tikzpicture}
\vspace{-1em} 
\caption{Configuration of $4$ lines and $(-2)$-curves of type $2A_3+A_1$ (degree $2$)}
\label{fig:2A_3+A_1}
\vspace{-0.5em} 
\end{figure}

\noindent
Then $K_S=-3H+E_1+E_2+E_3+2E_4+2E_5+2E_6+E_7.$. 
We define a set of effective divisors on $\PP(T_S)$ as follows.

\begin{itemize}
\item
$\breve\Cc$ is the total dual VMRT associated to the family of strict transforms $\ell$ of lines passing through $p_7$.
\item
$\breve\Dd_{1245}$ (resp. $\breve\Dd_{1236}$) is the total dual VMRT associated to the family of strict transforms $C_{1245}$ (resp. $C_{1236}$) of conics passing through $p_1,\,p_2,\,p_4,\,p_5$ (resp. $p_1,\,p_2,\,p_3,\,p_6$).
\end{itemize}
Then $[\ell]=H-E_7$ with the non-reduced curve $(H-E_3-2E_6-E_7)+2E_6$, 
$[C_{1245}]=2H-E_1-E_2-2E_4-2E_5$ with the non-reduced curves $2(H-E_1-E_2-E_4-E_5-E_7)+E_1+E_2+2E_7$, $(H-E_1-E_3-2E_4-E_6)+(H-E_2-E_3-2E_5-E_6)+2E_3+2E_6$, and $[C_{1236}]=2H-E_1-E_2-E_3-E_4-E_5-2E_6$ with the non-reduced curve $(H-E_1-E_2-E_4-E_5-E_7)+(H-E_3-2E_6-E_7)+2E_7$.
So we have 
\begin{align*}
\breve\Cc_7
&=\zeta+\Pi^*(K_S+2[\ell])-\Pi^*E_6\\
&=\zeta-3\Pi^*H+\hspace{0.5em}\Pi^*E_1+\hspace{0.5em}\Pi^*E_2+\hspace{0.5em}\Pi^*E_3+2\Pi^*E_4+2\Pi^*E_5+2\Pi^*E_6+\hspace{0.5em}\Pi^*E_7\\
&\hspace{1.8em}+2\Pi^*H\hspace{24.7em}-2\Pi^*E_7\\
&\hspace{25.9em}\hspace{0.4em}-\hspace{0.5em}\Pi^*E_6\\
&={\zeta-\hspace{0.5em}\Pi^*H+\hspace{0.5em}\Pi^*E_1+\hspace{0.5em}\Pi^*E_2+\hspace{0.5em}\Pi^*E_3+2\Pi^*E_4+2\Pi^*E_5+\hspace{0.5em}\Pi^*E_6-\hspace{0.5em}\Pi^*E_7},\\
\breve\Dd_{1245}
&=\zeta+\Pi^*(K_S+2[C_{1245}])-\Pi^*(H-E_1-E_2-E_4-E_5-E_7)-\Pi^*E_7\\
&=\zeta-3\Pi^*H+\hspace{0.5em}\Pi^*E_1+\hspace{0.5em}\Pi^*E_2+\hspace{0.5em}\Pi^*E_3+2\Pi^*E_4+2\Pi^*E_5+2\Pi^*E_6+\hspace{0.5em}\Pi^*E_7\\
&\hspace{1.8em}+4\Pi^*H-2\Pi^*E_1-2\Pi^*E_2\hspace{4.2em}-4\Pi^*E_4-4\Pi^*E_5\\
&\hspace{1.8em}-\hspace{0.5em}\Pi^*H+\hspace{0.5em}\Pi^*E_1+\hspace{0.5em}\Pi^*E_2\hspace{4.2em}+\hspace{0.5em}\Pi^*E_4+\hspace{0.5em}\Pi^*E_5\hspace{4.2em}+\hspace{0.5em}\Pi^*E_7\\
&\hspace{13.5em}
\hspace{0.4em}-\hspace{0.5em}\Pi^*E_3\hspace{8.3em}
-\hspace{0.5em}\Pi^*E_6
-\hspace{0.5em}\Pi^*E_7\\
&={\zeta\hspace{16.2em}-\hspace{0.5em}\Pi^*E_4-\hspace{0.5em}\Pi^*E_5+\hspace{0.5em}\Pi^*E_6+\hspace{0.5em}\Pi^*E_7},\\
\breve\Dd_{1236}
&=\zeta+\Pi^*(K_S+2[C_{1236}])-\Pi^*E_7\\
&=\zeta-3\Pi^*H+\hspace{0.5em}\Pi^*E_1+\hspace{0.5em}\Pi^*E_2+\hspace{0.5em}\Pi^*E_3+2\Pi^*E_4+2\Pi^*E_5+2\Pi^*E_6+\hspace{0.5em}\Pi^*E_7\\
&\hspace{1.8em}+4\Pi^*H-2\Pi^*E_1-2\Pi^*E_2-2\Pi^*E_3-2\Pi^*E_4-2\Pi^*E_5-4\Pi^*E_6\\
&\hspace{30em}
\hspace{0.4em}-\hspace{0.5em}\Pi^*E_7\\
&={\zeta+\hspace{0.5em}\Pi^*H-\hspace{0.5em}\Pi^*E_1-\hspace{0.5em}\Pi^*E_2-\hspace{0.5em}\Pi^*E_3\hspace{8.3em}-2\Pi^*E_6},
\end{align*}
which are summed up to $\breve\Cc_7+\breve\Dd_{1245}+\breve\Dd_{1236}=3\zeta+\Pi^*E_4+\Pi^*E_5$.
Thus $T_S$ is not big by Lemma~\ref{not_big_lemma}.
\end{proof}

\begin{Rmk}
Consider Figure~\ref{fig:2A_3+A_1} in Proposition~\ref{2A_3+A_1}.
If we blow up at a smooth point of $E_4$ or $E_5$, then we have a weak del Pezzo surface $S$ of degree $1$ of type $A_7+A_1$.
If we blow up at a smooth point of $E_6$ or $E_7$, then we have a weak del Pezzo surface $S$ of degree $1$ of type $A_3+D_5$.
Thus $T_S$ is not big for both cases by Proposition~\ref{2A_3+A_1}. 
\end{Rmk}

\begin{Rmk}
Consider a weak del Pezzo surface $S$ of degree $3$ of type $D_4$.
By Lemma~\ref{D_4}, $T_S$ is not big.
We then consider \cite[Figure 16]{MS}.
If we blow up at the intersection point of three $(-1)$-curves then we have a weak del Pezzo surface $S$ of degree $2$ of type $3A_1+D_4$ \cite[Figure 9]{MS}.
Then we obtain a weak del Pezzo surface $S$ of degree $1$ of type $2D_4$ or $E_6+2A_1$ by blowing up at a smooth point in $(-1)$-curve.
Thus $T_S$ is not big for all of these examples.
\end{Rmk}

We conclude this section with the following natural question: Is there a weak del Pezzo surface $S$ of degree $2$ or $1$ with big $T_S$?

\end{document}